\numberwithin{equation}{section}
\newtheorem{dfn}{Definition}[section]
\newtheorem{thm}[dfn]{Theorem}
\newtheorem{lma}[dfn]{Lemma}
\newtheorem{ppsn}[dfn]{Proposition}
\newtheorem{crlre}[dfn]{Corollary}
\newtheorem{rmrk}[dfn]{Remark}
\DeclarePairedDelimiterX{\norm}[1]{\lVert}{\rVert}{#1}
\DeclarePairedDelimiterX{\bnorm}[1]{\big\lVert}{\big\rVert}{#1}
\DeclarePairedDelimiterX{\Bnorm}[1]{\Big\lVert}{\Big\rVert}{#1}
\newcommand\at[2]{\left.#1\right|_{#2}}
\newcommand{\R}{\mathbb{R}}
\newcommand{\Nat}{\mathbb{N}}
\newcommand{\hil}{\mathcal{H}}
\newcommand{\blm}{\mathcal{B}(\ell_2^m)}
\newcommand{\Tr}{\operatorname{Tr}}
\newcommand{\sgn}{\operatorname{sgn}}
\newcommand{\la}{\langle}
\newcommand{\ra}{\rangle}
\begin{document}

	\title[Isometric Embeddability of $S_q^m$ into $S_p^n$]{Isometric Embeddability of $S_q^m$ into $S_p^n$}
	\author[Chattopadhyay] {Arup Chattopadhyay}
	\address{Department of Mathematics, Indian Institute of Technology Guwahati, Guwahati, 781039, India}
	\email{arupchatt@iitg.ac.in, 2003arupchattopadhyay@gmail.com}
	
	\author[Hong] {Guixiang Hong}
	\address{School of Mathematics and Statistics, Wuhan university, Wuhan-430072, China}
	\email{guixiang.hong@whu.edu.cn}
	
	\author[Pal] {Avijit Pal}
	\address{Department of Mathematics, Indian Institute of Technology, Bhilai-492015, India}
	\email{avijit@iitbhilai.ac.in}
	
	\author[Pradhan] {Chandan Pradhan}
	\address{Department of Mathematics, Indian Institute of Technology Guwahati, Guwahati, 781039, India}
	\email{chandan.math@iitg.ac.in, chandan.pradhan2108@gmail.com}
	
	\author[Ray] {Samya Kumar Ray}
	\address{School of Mathematics and Statistics, Wuhan University, Wuhan-430072, China}
	\email{samyaray7777@gmail.com}

	\subjclass[2010]{46B04, 46L51, 15A60, 47A55}
	
	\keywords{Isometric embedding; Schatten-$p$ class; Kato-Rellich theorem; Multiple operator integral}
	

	\begin{abstract}
		In this paper, we study existence of isometric embedding of $S_q^m$ into $S_p^n,$ where $1\leq p\neq q\leq \infty$ and $n\geq m\geq 2.$ We show that for all $n\geq m\geq 2$ if there exists a linear isometry from $S_q^m$ into $S_p^n$, where $(q,p)\in(1,\infty]\times(1,\infty) \cup(1,\infty)\setminus\{3\}\times\{1,\infty\}$ and $p\neq q,$ then we must have $q=2.$ This mostly generalizes a classical result of Lyubich and Vaserstein. We also show that whenever $S_q$ embeds isometrically into $S_p$ for $(q,p)\in \left(1,\infty\right)\times\left[2,\infty \right)\cup[4,\infty)\times\{1\} \cup\{\infty\}\times\left( 1,\infty\right)\cup[2,\infty)\times\{\infty\}$ with $p\neq q,$ we must have $q=2.$ Thus, our work complements work of Junge, Parcet, Xu and others on isometric and almost isometric embedding theory on non-commutative $L_p$-spaces. Our methods rely on several new ingredients related to perturbation theory of linear operators, namely Kato-Rellich theorem, theory of multiple operator integrals and Birkhoff-James orthogonality, followed by thorough and careful case by case analysis. The question whether for $m\geq 2$ and $1<q<2,$ $S_q^m$ embeds isometrically into $S_\infty^n$, was left open in \textit{Bull. London Math. Soc.} 52 (2020) 437-447. 
	\end{abstract}
	\maketitle

\section{Introduction and main results}
The isometric theory of Banach spaces have a long history. In his seminal work \cite{Ba32}, Banach characterized isometries from $\ell_p$ to $\ell_p$ for $1\leq p\neq 2<\infty.$ This work was taken up further by Lamperti \cite{La58} who characterized isometries from $L^p(\Omega)$ to $L^p(\Omega),$ for $1\leq p\neq 2<\infty,$ where $\Omega$ is a $\sigma$-finite measure space. We refer \cite{FlJa03}, \cite{FlJa07} for a detailed exposition on this topic. However, embeddings between different $L_p$-spaces also have a long history and led to strong interactions between Banach space theory, probability theory, geometry of convex bodies, harmonic analysis and combinatorics. The work of Le\'vy \cite{Le37} and Schoenberg \cite{Sc38} made strong connections between positive definite functions, stable random variables and isometric embeddings of Banach spaces into $L_p$-spaces.  Rosenthal \cite{Ro73} obtained structural results on subspaces of $L_p([0,1])$ for $1\leq p<2.$ Delbaen et al. \cite{DeJaPe98} established a characterization of subspaces of $L_p([0,1])$ which embed isometrically into  $\ell_p$ or $\ell_p^n,$ where $0<p<\infty,$ answering a question of Albrecht Pietsch. It is also known that \cite{BrDiKr66,He71} $L_q([0,1])$ isometrically embeds in $L_p([0,1]),$ where $0<p<q\leq 2.$ Isometric embeddings between finite dimensional $\ell_p$-spaces were also studied. In \cite{LySh04}, the authors showed that if there exists a linear isometry from $\ell_q^m$ to $\ell_p^n,$ where $1\leq p\neq q\leq \infty,$ and $2\leq m\leq n,$ then we must have $q=2$ and $p$ to be an even integer. Moreover, given $m\in\mathbb N$ and $p\in 2\mathbb N,$ they also established quantitative estimates on $n\in\mathbb N,$ depending on $m$ and $p,$ for which $\ell_2^m$ embeds isometrically into $\ell_p^n.$ The origin of these kind of results go back to \cite{LyVa93}, where similar problems were studied in the context of finite dimensional $\ell_p$-spaces over $\mathbb R$. Remarkably, this line of research is related to spherical design, combinatorics and number theory. We refer \cite{Ly08}--\cite{LySh01} for more on this topic. Finally, we mention \cite{KoKo01} and references therein for many results on isometric and almost isometric embeddings between $L_p$ and $\ell_p$-spaces and a historical account on this direction of research.

Inspired by quantum mechanics, non-commutative analysis has been developed very quickly in the past few decades. In this direction of research, quantization of measure theory leads to the theory of non-commutative $L_p$-spaces. Isometries on non-commutative $L_p$-spaces were studied in \cite{Ar75}, \cite{Ye81}, \cite{JuRuSh05}. In this new set-up, one often encounters significant difficulty, and many purely non-commutative phenomenons occur. For example unlike the classical situation \cite{LySh04} $S_2^2$ embeds isometrically into $S_\infty^n$ for any $n\geq 4$ by identifying $S_2^2$ with row or column Hilbert space. However, remarkable progress has been made, and the quest for non-commutative analogs of classical results, some of which were mentioned earlier in the introduction, started a rich interaction between free probability theory, operator spaces, and non-commutative $L_p$-spaces.  For example, to establish a non-commutative analog of Rosenthal's theorem was a long-standing open problem which was finally settled by Junge and Parcet \cite{JuPa082}. Pisier \cite{Pi04} showed that the operator Hilbert space does not embed completely isomorphically into predual of semifinite von Neumann algebras. In \cite{Ju05}, the author established that the operator Hilbert space embeds \textit{completely isomorphically} into predual of a von Neumann algebra of type $\text{III}.$ It follows from work of Xu \cite[Corollary 5.7]{Xu06} that $S_q$ does not embed \textit{completely isomorphically} into a semifinite non-commutative $L_p$-space for $1\leq p<q\leq 2$. On the other hand, $S_q$ embeds \textit{isomorphically} into $L_p(\mathcal R),$ where $\mathcal R$ is a hyperfinite type $\text{II}_1$ factor for $1\leq p<q\leq 2$ \cite{Ju00}. It follows from the main result of \cite{Ju00} that for all $0<p<q<2,$ $L_q(\mathcal R\overline{\otimes}B(\ell_2))$ and hence $L_q(\mathcal R)$ embeds isometrically into $L_p(\mathcal R)$. Moreover, existence of completely isometric embedding of $L_q(\mathcal R)$ into $L_p(\mathcal R)$ is an open problem \cite[P. 10]{JuPa10}. We refer \cite{JuPa10}, \cite{JuPa08}, \cite{JuPa082}, \cite{JuNiRuXu04}, \cite{SuXu03}, \cite{JuRuSh05}, \cite{RaXu03} and references therein for more information in this direction of research.

In this paper, we study \textit{isometric embeddability} between finite-dimensional Schatten-$p$ classes and partially establish a non-commutative analog of the main result of \cite{LySh04}. Thus, our result is another contribution in the direction of non-commutative isometric embedding theory. We also provide similar results for embeddability of infinite-dimensional Schatten-$p$ classes and find interesting implications on non-commutative $L_p$-spaces. We prove the following theorem. 
\begin{thm}\label{main}
	Let $(q,p)\in(1,\infty]\times(1,\infty) \cup(1,\infty)\setminus\{3
	\}\times\{1,\infty\}$ with $p\neq q$ and $2\leq m\leq n$. 
	If there exists an isometric embedding of $S_q^m$ into $S_p^n$,
	then we must have $q=2$.
\end{thm}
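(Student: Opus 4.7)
\emph{Proof plan.} Suppose, for contradiction, that $J : S_q^m \to S_p^n$ is a linear isometry with $q \neq 2$. The strategy is to fix a self-adjoint $A \in S_q^m$ of full rank with simple spectrum and an arbitrary self-adjoint $B \in S_q^m$, and to compare Taylor expansions at $t = 0$ of the two sides of the identity $\|A + tB\|_q = \|J(A) + tJ(B)\|_p$. By the Kato-Rellich theorem, the eigenvalues $\lambda_i(t)$ of $A + tB$ and the associated spectral projections are real-analytic in $t$ near $0$; after a genericity argument that also arranges $J(A)$ to have simple spectrum, the same holds for the eigenvalues $\mu_j(t)$ of $J(A) + tJ(B)$. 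Hence $\sum_i |\lambda_i(t)|^q = \|A+tB\|_q^q$ and $\sum_j |\mu_j(t)|^p = \|J(A)+tJ(B)\|_p^p$ are real-analytic near $0$, and the coefficients on the two sides must be compatible with the single analytic function $t \mapsto \|A+tB\|_q$.

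For the main range $p \in (1, \infty)$, I would compute the $k$-th Taylor coefficient of $\Tr |A + tB|^q$ via multiple operator integrals: it is a trace pairing of $B$ (inserted $k$ times) against the $k$-th divided difference of $x \mapsto |x|^q$ on the spectrum of $A$, acting as a Schur multiplier on the entries of $B$ in the eigenbasis of $A$. Equating these coefficients with their $p$-analogs for $J(A)$ and $J(B)$, and varying $B$ over all self-adjoint matrices, produces an overdetermined family of algebraic identities linking $J$, the spectra of $A$ and $J(A)$, and the divided differences of $|x|^q$ and $|x|^p$; these are simultaneously consistent only when the divided differences have the special polynomial structure enjoyed by $x \mapsto x^2$, forcing $q = 2$. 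For the endpoint cases $p \in \{1, \infty\}$ the $p$-norm is non-differentiable and this machinery fails; there I would instead use Birkhoff-James orthogonality, which any linear isometry preserves and which admits explicit SVD-based characterizations in $S_1^n$ and $S_\infty^n$ via supporting functionals. Transferring those relations back to $S_q^m$, where the norm is smooth for $q \in (1,\infty)$ so that Birkhoff-James orthogonality is encoded by the derivative of $\|\cdot\|_q^q$, again yields divided-difference identities pinning down $q = 2$; the exclusion of $q = 3$ from this second regime reflects an algebraic coincidence in the relevant cubic identity that this method cannot rule out.

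The principal obstacle I expect is the exponent-specific combinatorial analysis of these Schur-multiplier/divided-difference identities, carried out uniformly over several disjoint $(q,p)$ ranges and in two rather different technical frameworks (multiple operator integrals for $p \in (1,\infty)$ versus Birkhoff-James orthogonality for $p \in \{1,\infty\}$). Secondary difficulties include ensuring throughout the perturbation that the spectra of $A$, $J(A)$ and their translates remain simple so that Kato-Rellich applies; selecting test matrices $B$ rich enough to probe the action of $J$, which is subtle because the unit ball of $S_q^m$ is not strictly convex along arbitrary matrix directions and forces one to work in carefully chosen two- and three-dimensional self-adjoint subspaces; and separately addressing the boundary value $q = \infty$ in the first regime, where $\|\cdot\|_q$ is itself non-smooth and analyticity must be imported from the $p$-side.
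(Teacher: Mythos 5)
Your proposal assembles the right toolbox (Kato--Rellich, multiple operator integrals, divided differences, Birkhoff--James orthogonality), but the central mechanism that is supposed to force $q=2$ is missing. If you take $A$ of full rank with simple spectrum, then \emph{both} sides of $\|A+tB\|_q=\|J(A)+tJ(B)\|_p$ are real-analytic near $t=0$, and matching Taylor coefficients produces no visible contradiction; the claim that the resulting ``overdetermined family of algebraic identities'' is consistent only for $q=2$ is an unproved hope, not an argument. The paper's proof works precisely by \emph{not} choosing a generic $A$: it restricts the isometry to a disjointly supported pair in $S_q^m$ (two diagonal matrix units), so that the source-side norm is the explicit function $\|(1,t)\|_q=(1+|t|^q)^{1/q}$, whose expansion at $0$ begins $1+\frac{1}{q}|t|^q+\cdots$ and is therefore non-analytic at the origin unless $q$ is an even integer. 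The contradiction then comes from showing that the target side $\|A+tB\|_p^p$ is either genuinely analytic near $0$ (cases $p=\infty$ and, on a suitable interval, $p=1$, via Kato--Rellich and Lemma \ref{derivative of f}) or has a strictly positive second derivative at $t=0$ (case $1<p<\infty$, via the multiple-operator-integral computation in Lemma \ref{l2}, which needs $AB\neq0$ and a delicate treatment of vanishing eigenvalues of $A$ through the regularization $A+xI$); comparing this with the singular term $\frac{p}{q}|t|^q$ pins down $q=2$. Without singling out this non-smooth source function, your coefficient comparison has no teeth.

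Two further gaps. First, Kato--Rellich yields analytic eigenvalue branches only for self-adjoint analytic families; $J(A)+tJ(B)$ is not self-adjoint for a general isometry $J$ (general analytic families only admit Puiseux expansions), so your appeal to analytic $\mu_j(t)$ is unjustified. The paper repairs this with the $2\times2$ off-diagonal trick of Lemma \ref{reduct}; you would need the same step, and note that your ``genericity argument'' arranging $J(A)$ to have simple spectrum is not available --- you cannot perturb the image independently of $J$, and in any case Kato--Rellich does not require simple spectrum, so the real work is confronting degenerate and vanishing eigenvalues of the image, which the paper does in Case II of Theorem \ref{th2}. Second, your division of labour is inverted relative to what actually works: Birkhoff--James orthogonality is used for $1<p<\infty$ (to kill the first-order term $a_1$), whereas the endpoints are handled by the analyticity argument for $p=\infty$ (Theorem \ref{isometric}) and by $2$-uniform PL-convexity of $S_1$ for $q\ge4$ together with analyticity for $q<4$ when $p=1$ (Theorem \ref{thm 3.4}); the surviving case $q=3$, $p=1$ is not a cubic coincidence in Birkhoff--James relations but the fact that the integrality argument only forces $q\in\mathbb N$, since $(1+t^3)^{1/3}$ is analytic on a half-line.
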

The above theorem can be concluded from Theorem \ref{isometric}, Theorem \ref{thm 3.4} and  Theorem \ref{th2}. It was implicitly proved in \cite{GuRe18} that $S_1^m$ does not embed isometrically into $S_\infty^n$. Their proof relied on a clever trick based on unitary dilation of contractions on a Hilbert space. Later, Pisier guessed that the same result is true when $S_\infty^n$ is replaced by $S_\infty$ (see \cite{GuRe18}) and G. Misra asked whether $\ell_q^m$ embeds into $S_\infty^n$ for all $1\leq q\neq 2<\infty$ and $2\leq m\leq n.$ All these questions were settled in \cite{Ra20} using Birkhoff-James orthogonality and norm parallelism, combined with infinite descent type arguments. It was also proved in \cite{Ra20} that $S_1^m$ does not embed isometrically into $S_p$ for all $m\geq 2$ and $1<p<\infty.$ However, the question, whether $\ell_q,$ $q\in(1,2)$ embeds isometrically into $S_\infty^n$ was left open in \cite{Ra20}.
Thus in Theorem \ref{main}, we settle this remaining case, by giving yet another proof which is completely different from \cite{Ra20} and new even in the commutative case. However, we do not know if $S_2^m$ embeds isometrically into $S_p^n$ where $1<p<\infty$ and $p$ is not an even integer. If $p\in 2\mathbb N$ and $m\in\mathbb N$ are given, then it can be easily seen from \cite[Theorem B]{LySh04} that $S_2^m$ embeds isometrically into $S_p^n$ for any $n\in\mathbb N,$ which is sufficiently large. Moreover, we also do not know whether $S_q^m$ embeds isometrically into $S_1^n,$ where $n\geq m\geq 2$ and $q\in\{2,3\}$.

Let us briefly mention some key steps and new ingredients used in the proof of Theorem \ref{main} when $1<p\neq q<\infty$.
\begin{itemize}
	\item[(i)]\textit{Multiple operator integral:} In \cite{LyVa93}, a solution for the classical isometric embedding problem between finite dimensional $\ell_p$-spaces was given, where the underlying scalar field was the field of real numbers. In our situation, we reduce the problem to Hermitian case, by using a $2\times 2$ trick. However, we observe that the proof given in \cite{LySh04} is more adaptable. Following \cite{LySh04}, we are reduced to showing that $\at{\frac{d^2}{dt^2}}{t=0}\|A+tB\|_p^p\neq 0$, where $(z,w)\mapsto zA+wB$ is an isometry from $\ell_q^2$ into $S_p^n.$ The formulas for derivatives of Schatten norms were given in \cite{SkTo19} and \cite{PoSu14} in terms of multiple operator integrals. By using these formulas, we calculate $\at{\frac{d^2}{dt^2}}{t=0}\|A+tB\|_p^p$ in terms of coefficients of the matrices $A$ and $B$ and show that this is non-zero. This calculation is difficult in the non-commutative setting; for instance, in the particular case $1<p<2$ and $A$ invertible, the precise calculation of the second derivative is the main contribution of \cite{RiXu16} and \cite{BaCaLi94}, which they handled with different methods than ours. In our case, the situation is much more complicated since $A$ may not be invertible, and we also handle the case $p>2.$ Also, our approach depends partially on the Kato-Rellich theorem.
	
	We would like to mention that recently Zhang \cite{Zh20} used the methods of our work to provide an alternative proof of a celebrated theorem of Ball, Carlen, and Lieb \cite{BaCaLi94}.
	\item[(ii)]\textit{Kato-Rellich theorem:} One of the main ingredients of our proof is a remarkable theorem of Rellich and Kato on perturbation theory of linear operators, which allows a local real-analytic choice of a complete set of eigenvalues and eigenvectors of any one-parameter real-analytic family of self-adjoint operators. This enables us to use various facts on real-analytic functions and use asymptotic comparisons around the neighborhood of the origin. This approach is new, even in the commutative case. Moreover, by using Kato-Rellich theorem, we give a new method to calculate $\at{\frac{d}{dt}}{t=0}\|A+tB\|_p^p.$ Previously, this was done in \cite{Ko84}.
\end{itemize}
We combine the above tools with Birkhoff-James orthogonality to obtain our result by delicate case by case analysis. For other values of $q$ and $p$ our analysis relies mostly on the Kato-Rellich theorem. It is natural to approach our problem by various invariants known in Banach space geometry such as modulus of convexity, type, cotype etc. But those concepts are more suited for infinite dimensional analysis and are not much help in our situation. However, to study the case $(q,p)\in[4,\infty)\times\{1\}$, we use $PL$-convexity of $S_1$, which was originally observed by Haagerup. We also prove an infinite-dimensional version of Theorem \ref{main}. 
This time the main dificulty lies on showing $\at{\frac{d^2}{dt^2}}{t=0}\|A+tB\|_p^p\neq 0$ but rest of the proof is verbatim. However, due to lack of an appropriate version Kato-Rellich theorem, we only obtain our result for $(q,p)\in \left(1,\infty\right)\times\left[2,\infty \right)\cup[4,\infty)\times\{1\} \cup\{\infty\}\times\left( 1,\infty\right)$.

We end the introduction by briefly mentioning the organization of the paper. In Section \ref{prel}, we describe all the necessary preliminaries and notations and prove some elementary lemmas. In Section \ref{endpoint} we prove Theorem \ref{isometric} and Theorem \ref{thm 3.4}. In Section \ref{isofinite}, we give the proof of Theorem \ref{th2}. In Section \ref{isoinfinite}, we prove infinite-dimensional analogs of our results.

\section{Preliminaries}\label{prel}
Let $\mathcal M$ be a von Neumann algebra with a normal, faithful, semifinite trace $\tau.$ Let $\mathcal{S}(\mathcal{M})$ be the linear span of set of all positive elements in $\mathcal M$ such that $\tau(s(x))<\infty,$ where $s(x)$ denotes the support projection of $x$. The non-commutative $L_p$-space is the closure of $\mathcal{S}(\mathcal{M})$ with respect to the norm $\|x\|_p:=(\tau(|x|^p))^{\frac{1}{p}}$ and is denoted by $L_p(\mathcal M),$ where $1\leq p<\infty.$ Denote $L_\infty(\mathcal M)=\mathcal M.$ When $\mathcal M=\mathcal{B}(\hil)$ and $\tau$ is the usual trace $Tr$ on $\mathcal{B}(\hil),$ the associated non-commutative $L_p$-spaces are known to be Schatten-$p$ classes for $1\leq p<\infty$ and denoted by $S_p(\mathcal H).$ We denote $S_\infty(\mathcal H)$ to be the set of all compact operators. Whenever $\mathcal H$ is $\ell_2^n$ or $\ell_2,$ we denote $S_p(\mathcal H)$ by $S_p^n$ or $S_p$ respectively. The set of all $n\times n$ complex matrices is denoted by $M_n$. We denote $M_n^{sa}$, $S_p^{sa}(\mathcal{H}),$ $S_p^{sa}$ and $L_p^{sa}(\mathcal M)$ to be the set of all self-adjoint elements in the corresponding spaces. 
\subsection{Kato-Rellich theorem and Birkhoff-James orthogonality:}
The following theorem can be deduced from a celebrated theorem of Kato and Rellich (see \cite[Page 31, Chapter I]{ReBe69},\cite[P. 21, Theorem 1]{Ba85}). For simplicity, we state the theorem for the case of real-analytic family of self-adjoint operators of the form $A+tB,$ where $A,B\in M_n^{sa}$ and $t\in\mathbb R.$
\begin{thm}\label{kato} Let $A,B\in M_n^{sa}.$ Then, for all $t_0\in\mathbb R$, there exist $\epsilon(t_0)>0$ and real-analytic functions $\lambda_k:(t_0-\epsilon(t_0),t_0+\epsilon(t_0)) \to \mathbb R,$ where $1\leq k\leq n$ and real-analytic functions $u_{ij}:(t_0-\epsilon(t_0),t_0+\epsilon(t_0)) \to \mathbb R,$ where $1\leq i,j\leq n$ such that
	\begin{itemize}
		\item[(i)]for all $t\in(t_0-\epsilon(t_0),t_0+\epsilon(t_0))$, $\{\lambda_1(t),\dots,\lambda_n(t)\}$ is the complete set of eigenvalues of $A+tB$ counting multiplicity.
		\item[(ii)] for all  $t\in(t_0-\epsilon(t_0),t_0+\epsilon(t_0))$, $U(t):=(u_{ij}(t))_{i,j=1}^n$ is a unitary matrix.
		\item[(iii)] for all  $t\in(t_0-\epsilon(t_0),t_0+\epsilon(t_0))$, $U(t)^*(A+tB)U(t)=\text{diag}(\lambda_1(t),\dots,\lambda_n(t)).$ 
	\end{itemize}
\end{thm}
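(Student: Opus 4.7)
The plan is first to reduce to the case $t_0 = 0$ by the translation $t \mapsto t + t_0$, which replaces $A$ with $A + t_0 B$. The heart of the argument is then to analyze the characteristic polynomial $p(z,t) = \det(zI - A - tB)$, a monic degree-$n$ polynomial in $z$ whose coefficients are polynomials (in particular real-analytic) in $t$. I would invoke Puiseux's theorem for polynomials over the ring of convergent power series: the roots of $p(\cdot,t)$ are locally parameterized by Puiseux series of the form $z = \sum_{j \geq j_0} c_j\, t^{j/d}$, where $d$ is the branching index of the corresponding irreducible factor. The decisive input from self-adjointness is that every such root is real for real $t$. This reality is incompatible with any nontrivial branching: the $d$ conjugate branches obtained by $t^{1/d} \mapsto \zeta\, t^{1/d}$ (with $\zeta^d=1$) could not all be real on an interval unless $d = 1$. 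Hence each eigenvalue is given by an honest convergent power series, yielding the real-analytic functions $\lambda_k(t)$ of item (i).

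To produce the unitary $U(t)$, I would use Riesz projections. Group the eigenvalues of $A$ into distinct values $\mu_1, \dots, \mu_r$ with multiplicities $m_1, \dots, m_r$, and enclose each $\mu_j$ by a small circle $\Gamma_j$ in the complex plane, disjoint from the others. For $t$ small enough, $\Gamma_j$ encircles precisely those branches $\lambda_k(t)$ emanating from $\mu_j$, and
\[
P_j(t) \;=\; \frac{1}{2\pi i}\oint_{\Gamma_j} (zI - A - tB)^{-1}\, dz
\]
is a real-analytic orthogonal projection of constant rank $m_j$. If the branches within a cluster split further for $t \neq 0$, I would iterate by shrinking contours around each separated sub-cluster; if several $\lambda_k(t)$ remain identically equal, they share a common projection and any real-analytic orthonormal basis of its range will do. Concatenating across clusters yields the columns of $U(t)$: pick fixed vectors $v_1, \dots, v_n$ spanning the appropriate ranges at $t=0$, apply $P_j(t)$, and perform Gram--Schmidt, all of which preserves real-analyticity. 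Items (ii) and (iii) follow automatically from the construction.

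The main obstacle is the handling of eigenvalue crossings at $t = 0$: one cannot naively attach a Riesz projection to an individual eigenvalue whose geometric multiplicity jumps. The resolution, which is precisely the content of Kato--Rellich, is to work first at the level of the full coalescing cluster — whose total spectral projection is analytic by the contour argument above — and then use the already-established analyticity of the individual $\lambda_k(t)$ to refine the decomposition inside the cluster. Individual eigenvectors are then recovered by solving linear systems whose coefficient matrices depend real-analytically on $t$ and have locally constant rank, so that the implicit function theorem delivers real-analytic solutions. The whole scheme is local on $t_0$, which is all that the statement demands.
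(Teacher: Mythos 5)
The paper does not prove this theorem; it is quoted from Rellich and Baumg\"artel, so there is no in-paper argument to compare against. Measured against the standard proof, your outline of the eigenvalue part is essentially the classical one and is sound in substance: the roots of $\det(zI-A-tB)$ are Puiseux branches, and reality forces integer exponents. One caution there: the bare assertion that the $d$ conjugate branches ``could not all be real on an interval unless $d=1$'' is not quite right as stated --- for $d=2$ the branches $\pm c\,t^{1/2}$ are both real for $t>0$. The correct argument needs reality on \emph{both} sides of $t_0$: first the reflection principle forces all Puiseux coefficients $\alpha_j$ to be real, and then reality of $\sum_j \alpha_j |t-t_0|^{j/d}e^{ij\pi/d}$ for $t<t_0$ forces $d\mid j$ whenever $\alpha_j\neq 0$. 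This is repairable and I would count it as an imprecision rather than a gap.

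The eigenvector part, however, contains a genuine gap exactly at the point you yourself identify as the main obstacle. Your proposed resolution --- ``solving linear systems whose coefficient matrices depend real-analytically on $t$ and have locally constant rank, so that the implicit function theorem delivers real-analytic solutions'' --- fails at a crossing. Take $A=0$ and $B=\mathrm{diag}(1,-1)$ in $M_2^{sa}$, with $\lambda_1(t)=t$, $\lambda_2(t)=-t$: the matrix $A+tB-\lambda_1(t)I$ has rank $0$ at $t=0$ and rank $1$ for $t\neq 0$, so the rank is \emph{not} locally constant precisely where the eigenvalues coalesce, and the implicit function theorem does not apply. Likewise, ``iterate by shrinking contours around each sub-cluster'' does not terminate, since the sub-clusters only separate for $t\neq 0$ and the required contours degenerate as $t\to 0$. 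The missing idea, which is the actual content of the Rellich--Kato theorem beyond the eigenvalue statement, is this: for $t\neq 0$ in the punctured neighborhood the individual Riesz projections $P_k(t)$ onto the $\lambda_k(t)$-eigenspaces are analytic with at worst an algebraic singularity at $t=0$; because $A+tB$ is self-adjoint for real $t$, each $P_k(t)$ is an \emph{orthogonal} projection, hence $\|P_k(t)\|\leq 1$ uniformly, so the singularity is removable and $P_k$ extends analytically across the crossing with constant rank. (Equivalently one may use Kato's reduction process together with the ODE $U'(t)=Q(t)U(t)$ for the transformation function, $Q=\sum_k [P_k'(t),P_k(t)]$, to produce the analytic unitary.) Only after this step does your recipe of applying the projections to fixed vectors and running Gram--Schmidt go through. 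Without it, items (ii) and (iii) are not established at the crossing points, which are the only points where the theorem is nontrivial.
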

We refer \cite{Ka13}, \cite{ReBe69} and \cite{Ba85} for more general version of Theorem \ref{kato} and applications. However, if $A$ or $B$ is a operator acting on an infinite dimensional Hilbert space, the exact analogue of the above theorem is not true.

We recall the notion of Birkhoff-James orthogonality, which was introduced in \cite{BI35} and \cite{JA45}. Let $\mathcal X$ be a Banach space. For $A,B\in B(\mathcal X),$ we say that $A$ is orthogonal to $B$ in the sense of Birkhoff-James if $\|A+zB\|_{\mathcal X\to\mathcal X}\geq\|A\|_{\mathcal X\to\mathcal X}$ for all complex numbers $z\in\mathbb C.$ In this case, we write $A\perp_{BJ}B.$ We have the following characterization of Birkhoff-James orthogonality, which we state for self-adjoint operators in Schatten class.
\begin{thm}\cite[Theorem 3.8]{BoCoMoWoZa19}\label{l3}
	Let $1<p<\infty$. For $A,B\in S_p^{sa},$ the following statements are equivalent:
	\begin{align*}
		(i)~& A\perp_{BJ}B.\hspace*{5in}\\
		(ii)~& \Tr(|A|^{p-1}UB)=0,
	\end{align*} where $U$ is the partial isometry corresponding to the polar decomposition of $A$.
\end{thm}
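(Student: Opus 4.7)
My plan is to recast the Birkhoff--James orthogonality $A\perp_{BJ}B$ as the statement that $z=0$ is a minimizer of the convex function $\phi:\Comp\to[0,\infty)$ defined by $\phi(z):=\|A+zB\|_p^p$, and then to read off condition (ii) by computing the gradient of $\phi$ at the origin. Convexity of $\phi$ is automatic: $z\mapsto A+zB$ is affine, $\|\cdot\|_p$ is a norm (hence convex), and $y\mapsto y^p$ is convex and nondecreasing on $[0,\infty)$ for $p\geq 1$, so $\phi$ is convex on $\Comp\cong\R^2$. By definition, $A\perp_{BJ}B$ is equivalent to $\phi(z)\geq\phi(0)$ for every $z\in\Comp$, i.e.\ $0$ is a global minimum of $\phi$.

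The technical core is the first-order derivative formula
\begin{equation*}
\frac{d}{dt}\bigg|_{t=0}\|A+tY\|_p^p \;=\; p\,\operatorname{Re}\Tr\bigl(|A|^{p-1}U\,Y\bigr),\qquad Y\in S_p,
\end{equation*}
where $|A|^{p-1}$ is extended by $0$ on $\ker A$. For self-adjoint $Y=B$ in finite dimensions I would deduce this from Theorem \ref{kato}: along the real-analytic eigenvalue curves $\lambda_k(t)$ of $A+tB$, the Hellman--Feynman relation $\lambda_k'(0)=\langle u_k,Bu_k\rangle$ holds (obtained by differentiating $\lambda_k(t)=\langle u_k(t),(A+tB)u_k(t)\rangle$ and using $\|u_k(t)\|=1$), so termwise differentiation of $\|A+tB\|_p^p=\sum_k|\lambda_k(t)|^p$ yields $p\sum_k|\lambda_k(0)|^{p-1}\sgn(\lambda_k(0))\lambda_k'(0)=p\Tr(|A|^{p-1}UB)$; zero eigenvalues contribute nothing since $x\mapsto|x|^p$ is $C^1$ at $0$ with vanishing derivative for $p>1$. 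For general $Y$ (needed to treat the imaginary direction $Y=iB$) I would differentiate $\Tr\bigl((A+tY)^*(A+tY)\bigr)^{p/2}$ directly and use the identity $|A|^{p-2}A=A|A|^{p-2}=U|A|^{p-1}$ together with cyclicity of the trace.

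For $(i)\Rightarrow(ii)$: restricting $\phi$ to the real axis gives a convex smooth function minimized at $0$, so its derivative vanishes, forcing $p\operatorname{Re}\Tr(|A|^{p-1}UB)=0$. Since $|A|^{p-1}U$ and $B$ are both self-adjoint, $\Tr(|A|^{p-1}UB)$ is already real, hence $\operatorname{Re}$ is redundant and (ii) follows. For $(ii)\Rightarrow(i)$, writing $z=s+it$ the two partial derivatives of $\phi$ at the origin are
\begin{equation*}
\partial_s\phi(0)=p\operatorname{Re}\Tr(|A|^{p-1}UB),\qquad \partial_t\phi(0)=p\operatorname{Re}\Tr(|A|^{p-1}U(iB))=-p\operatorname{Im}\Tr(|A|^{p-1}UB).
\end{equation*}
The first vanishes by (ii), and the second vanishes unconditionally because $\Tr(|A|^{p-1}UB)\in\R$; since $\phi$ is convex on $\R^2$, a critical point is a global minimum, yielding $\phi(z)\geq\phi(0)$ for every $z\in\Comp$, which is (i).

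The main obstacle I anticipate is making the derivative formula rigorous when $A$ is not invertible, and in the infinite-dimensional setting where Theorem \ref{kato} is not directly available. The natural remedy is either to approximate $A$ by invertible self-adjoints and pass to the limit using continuity of $x\mapsto|x|^{p-1}$ at $0$ for $p>1$, or to appeal to a Daleckii--Krein double operator integral representation of the derivative valid on all of $S_p^{sa}$.
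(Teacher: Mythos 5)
Your proof is correct, and it is worth noting that the paper itself does not prove this theorem: it is quoted from \cite[Theorem 3.8]{BoCoMoWoZa19}, and only the implication $(i)\Rightarrow(ii)$ is justified in the text (Remark \ref{1strem}), by precisely the argument you give --- $t\mapsto\|A+tB\|_p^p$ is minimized at $t=0$, so its derivative vanishes there, and by the first-order derivative formula (Kosaki's lemma, restated in the infinite-dimensional setting as Lemma \ref{lm}) that derivative is $p\Tr(B|A|^{p-1}\sgn(A))$. What you add is a self-contained proof of the converse, which the paper outsources to the reference: convexity of $\phi(z)=\|A+zB\|_p^p$ on $\Comp\cong\R^2$, vanishing of the real-direction partial derivative by $(ii)$, and automatic vanishing of the imaginary-direction partial because $|A|^{p-1}U$ and $B$ are both self-adjoint, so $\Tr(|A|^{p-1}UB)\in\R$ (equivalently, $\phi(z)=\phi(\bar z)$ for self-adjoint $A,B$). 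Two points to make airtight: first, to conclude that a critical point of $\phi$ is a global minimum you should either invoke the fact that a convex function on $\R^2$ with both partials existing at a point is differentiable there, or argue directly that all one-sided directional derivatives $\phi'(0;e^{i\theta})=p\cos\theta\cdot\Tr(|A|^{p-1}UB)$ are nonnegative under $(ii)$, which is the cleanest route; second, since the theorem is stated for the infinite-dimensional class $S_p^{sa}$, the Kato--Rellich/Hellmann--Feynman derivation of the derivative formula is not available, and the correct fix is the one the paper itself uses, namely the Fr\'{e}chet differentiability of $\|\cdot\|_p^p$ on all of $S_p$ (Lemma \ref{lm}), which makes your proposed approximation of $A$ by invertible operators unnecessary.
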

\begin{rmrk}\label{1strem}
	In the above theorem, $(i)\implies(ii)$ is true even in general non-commutative $L_p$-spaces, where $1<p<\infty$. Just observe that if $A,B\in L_p^{sa}(\mathcal M)$ with $A\perp_{BJ}~B,$ then the map $t\mapsto \|A+tB\|_p^p$ attains minimum at $t=0.$ Therefore, we have $\at{\frac{d}{dt}}{t=0}\|A+tB\|_p^p= 0.$ Now it follows from \cite[Lemma 3.1.]{Ko84} (also see \cite[Lemma 5.8.4.]{SkTo19}) that $\tau(|A|^{p-1}UB)=0$, $U$ being the partial isometry in the polar decomposition of $A$.
\end{rmrk}
Following Remark \ref{1strem}, we have the following corollary which seems to be new even in the commutative case. Denote $(e_1,e_2)$ to be the standard basis of $\mathbb{C}^2.$
\begin{crlre}
	Let $1<p<\infty$ and $1\leq q\leq \infty.$ If $p\neq q,$ there is no linear isometry $T:\ell_q^2\to L_p(\mathcal M)$ with $T(e_1)=A,$ $T(e_2)=B$ such that $A\geq 0$ and $B\geq 0$.
\end{crlre}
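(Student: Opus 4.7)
The plan is to use Birkhoff--James orthogonality together with Remark \ref{1strem} to force the supports of $A$ and $B$ to be mutually orthogonal, and then to derive a direct contradiction with the isometry hypothesis whenever $p\neq q$.

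First, I would note that since $T$ is a linear isometry, $\|A\|_p=\|B\|_p=1$ and
\[
\|A+zB\|_p=\|e_1+ze_2\|_q\geq 1=\|A\|_p
\]
for every $z\in\mathbb C$; that is, $A\perp_{BJ}B$. By Remark \ref{1strem}, this implies $\tau(|A|^{p-1}UB)=0$, where $U$ is the partial isometry in the polar decomposition $A=U|A|$. Since $A\geq 0$, we have $|A|=A$ and $U$ coincides with the support projection $s(A)$; moreover $A^{p-1}s(A)=A^{p-1}$ by the Borel functional calculus, so the orthogonality relation reduces to
\[
\tau(A^{p-1}B)=0.
\]

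Next, I would invoke the tracial property to rewrite
\[
\tau(A^{p-1}B)=\tau\bigl(A^{(p-1)/2}BA^{(p-1)/2}\bigr)=\tau\bigl((B^{1/2}A^{(p-1)/2})^{*}(B^{1/2}A^{(p-1)/2})\bigr).
\]
Vanishing of the trace of this positive operator, combined with faithfulness of $\tau$, yields $B^{1/2}A^{(p-1)/2}=0$, from which $s(A)s(B)=0$; equivalently, $A$ and $B$ have mutually orthogonal supports.

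Finally, orthogonal supports of the positive operators $A$ and $B$ give $AB=BA=0$, so $|zA+wB|^{2}=|z|^{2}A^{2}+|w|^{2}B^{2}$ and the functional calculus yields $|zA+wB|=|z|A+|w|B$; therefore
\[
\|zA+wB\|_p=(|z|^p+|w|^p)^{1/p}
\]
for all $z,w\in\mathbb C$. Combining this with the isometry identity $\|zA+wB\|_p=\|(z,w)\|_q$ and specializing to $z=w=1$ forces $2^{1/p}=2^{1/q}$ when $q<\infty$ and $2^{1/p}=1$ when $q=\infty$, both contradicting $p\neq q$. The only delicate step is the passage from $\tau(A^{p-1}B)=0$ to orthogonal supports, but it rests solely on cyclicity, positivity, and faithfulness of $\tau$, so no genuine obstacle arises and the argument works uniformly on any semifinite $L_p(\mathcal M)$ without a finite-dimensionality restriction.
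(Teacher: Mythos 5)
Your proposal is correct and follows essentially the same route as the paper's own proof: Birkhoff--James orthogonality plus Remark \ref{1strem} give $\tau(A^{p-1}B)=0$, faithfulness of $\tau$ yields $B^{1/2}A^{(p-1)/2}=0$ and hence $s(A)s(B)=0$, and disjoint supports force $\|zA+wB\|_p=(|z|^p+|w|^p)^{1/p}$, contradicting $p\neq q$. The only cosmetic differences are that you symmetrize the trace around $A^{(p-1)/2}$ rather than $B^{1/2}$ and spell out the $q=\infty$ case explicitly.
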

\begin{proof}Since $A\geq 0,$ we have $|A|^{p-1}U=A^{p-1}U=A^{p-1},$ $U$ being the partial isometry in the polar decomposition of $A$. Note that as $T:\ell_q^2\to L_p(\mathcal M)$ is a linear isometry, we must have that $\|T(e_1)\|_p=\|A\|_p=1.$ Similarly, $\|B\|_p=1.$ Therefore, we have that \[\|A+zB\|_p=\|(1,z)\|_q\geq 1=\|A\|_p\] for all $z\in\mathbb C.$ Therefore, $A\perp_{BJ}B$ and by Remark \ref{1strem}, we have 
	\begin{equation*}
		\tau(A^{p-1}B)=\tau(B^{\frac{1}{2}}A^{p-1}B^{\frac{1}{2}})=0.
	\end{equation*} Hence by faithfulness of $\tau,$ we have 
\begin{align}\label{eqn 2.1}
	B^{\frac{1}{2}}A^{\frac{p-1}{2}}=0.
\end{align}
 Let $A=\int \lambda dE_A(\lambda)$ and $B=\int \lambda dE_B(\lambda)$ be the spectral resolutions of $A$ and $B$ respectively. Denote $A_1:=\int \lambda^{\frac{-p+1}{2}} \chi_{(0,\infty)}dE_A(\lambda)$ and $B_1:=\int\lambda^{-\frac{1}{2}} \chi_{(0,\infty)}dE_B(\lambda).$ Multiplying from left by $B_1$ and right by $A_1$ in \eqref{eqn 2.1}, we clearly have $s(B)s(A)=0.$ Then, it follows that  \[\|zA+wB\|_p=(|z|^p+|w|^p)^{\frac{1}{p}}=(|z|^q+|w|^q)^{\frac{1}{q}}\] for all $z,w\in\mathbb C,$ where the first equality follows from (see \cite[Proof of Theorem 1]{Ye81}) and the second equality follows from the fact that $T$ is a linear isometry. Thus we must have $p=q.$
\end{proof}	
\subsection{Multiple operator integrals and operator derivatives in $\ell_2^m$}
Let $A_0,A_1,\ldots,A_n$ be self-adjoint operators defined on an $m$-dimensional Hilbert space $\ell_2^m$.
Let $e^{(j)}=\{e_i^{(j)}\}_{i=1}^{m}$ be an orthonormal basis of eigenvectors of $A_j$ and let $\lambda^{(j)}=(\lambda_i^{(j)})_{i=1}^{m}$ be the corresponding $m$-tuple of eigenvalues, $j=0,1,\ldots,n$. Let $\phi:\R^{n+1}\to\mathbb{C}$. Now one defines $T^{A_0,A_1,\ldots,A_n}_\phi$: $\underbrace{\blm\times\blm\times\cdots\times\blm}_{\text{n-times}}\longrightarrow \blm$ by 
\begin{align}\label{eq1}
	T^{A_0,A_1,\ldots,A_n}_\phi(B_1,B_2,\ldots,B_n)=\sum_{i_0,i_1,\ldots,i_n=1}^{m}\phi(\lambda_{i_0}^{(0)},\lambda_{i_1}^{(1)},\ldots,\lambda_{i_n}^{(n)})P_{e_{i_0}^{(0)}}B_1P_{e_{i_1}^{(1)}}B_2\cdots B_nP_{e_{i_n}^{(n)}},
\end{align}
for $B_1,B_2,\ldots,B_n\in\blm$ and $P_{e_{i_k}^{(k)}}$ is the orthogonal projection of $\ell_2^m$ onto the subspace spanned by the vector $\{e_{i_k}^{(k)}\}$. Note that \eqref{eq1} can also be rewritten in terms of the spectral measures $E_{A_j}$ of $A_j$. Assume that $\{\lambda_i^{(j)}\}_{i=1}^{d_j}$ is the set of pairwise distinct eigenvalues of the operator $A_j$, where $d_j\in\Nat$ and $d_j\leq m$. Then we have the following equivalent form of \eqref{eq1}.
\begin{align*}
	& T^{A_0,A_1,\ldots,A_n}_\phi(B_1,B_2,\ldots,B_n)\\
	& =\sum_{i_0=1}^{d_0}\cdots\sum_{i_n=1}^{d_n}\phi(\lambda_{i_0}^{(0)},\lambda_{i_1}^{(1)},\ldots,\lambda_{i_n}^{(n)})E_{A_0}(\{\lambda_{i_0}^{(0)}\})B_1E_{A_1}(\{\lambda_{i_1}^{(1)}\})B_2\cdots B_nE_{A_n}(\{\lambda_{i_n}^{(n)}\}).
\end{align*} The operator $T^{A_0,A_1,\ldots,A_n}_\phi$ is a discrete version of a multiple
operator integral. The function $\phi$ is usually called the symbol of the operator $T^{A_0,A_1,\ldots,A_n}_\phi$. For more about discrete multiple operator integral see \cite[Chapter 4]{SkTo19}.\\
Throughout this paper we use the notation $C^k(\R)$, for the space of all $k$-times differentiable functions with continuous $k$-th derivative, where $k\in\mathbb N\cup\{0\}$.
Let $f\in C^r(\R)$. Recall that the divided difference of order $r$ is an operation on the function $f$ of one
(real) variable, and is defined recursively as follows:
\begin{align*}
	f^{[0]}(\lambda)&:=f(\lambda),\\
	f^{[r]}(\lambda_0,\lambda_1,\ldots,\lambda_r)&:=\begin{cases*}
		\frac{f^{[r-1]}(\lambda_0,\lambda_1,\ldots,\lambda_{r-2},\lambda_r)-f^{[r-1]}(\lambda_0,\lambda_1,\ldots,\lambda_{r-2},\lambda_{r-1})}{\lambda_r-\lambda_{r-1}} \quad \text{if}\quad \lambda_r\neq\lambda_{r-1},\\
		\at{\frac{\partial}{\partial \lambda}}{\lambda=\lambda_{r-1}}f^{[r-1]}(\lambda_0,\lambda_1,\ldots,\lambda_{r-2},\lambda)\quad \text{if}\quad \lambda_r=\lambda_{r-1}.
	\end{cases*}
\end{align*}

\begin{thm}\cite[Theorem 5.3.2]{SkTo19}\label{th1}
	Let $A,B\in M_n^{sa}$ and $f\in C^r(\R), r\in\Nat$. Then the function $\mathbb{R} \ni t\mapsto f(A+tB)$ is $r$-times differentiable in the operator norm and the $r$-th order derivative is given by the formula
	\begin{align}
		\label{eq2}&\frac{1}{r!}\at{\frac{d^r}{dt^r}}{t=0}f\big(A+tB \big)=T^{A,A,\ldots,A}_{f^{[r]}}\underbrace{(B,B,\ldots,B)}_{r\text{-times}}
	\end{align}
	and hence 
	\begin{align}
		\label{eq3}&\frac{1}{r!}\at{\frac{d^r}{dt^r}}{t=0}\Tr\big(f\big(A+tB \big)\big)=\Tr\bigg(T^{A,A,\ldots,A}_{f^{[r]}}\underbrace{(B,B,\ldots,B)}_{r\text{-times}}\bigg).
	\end{align}
\end{thm}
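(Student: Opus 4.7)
My plan is to establish \eqref{eq2} first for polynomials via the resolvent calculus, then extend to $C^r(\R)$ by uniform approximation; \eqref{eq3} then drops out by taking traces. Choose a positively oriented contour $\Gamma\subset\Comp$ enclosing $\sigma(A+tB)$ for all real $t$ in a small neighborhood of $0$. For $f$ holomorphic inside $\Gamma$, the Riesz--Dunford functional calculus combined with the Neumann expansion
\begin{equation*}
(z-A-tB)^{-1}=\sum_{k\geq 0}t^{k}\bigl[(z-A)^{-1}B\bigr]^{k}(z-A)^{-1},
\end{equation*}
valid for $|t|$ small and $z\in\Gamma$, gives after differentiating $r$ times and setting $t=0$,
\begin{equation*}
\frac{1}{r!}\at{\frac{d^r}{dt^r}}{t=0}f(A+tB)=\frac{1}{2\pi i}\oint_\Gamma f(z)\bigl[(z-A)^{-1}B\bigr]^{r}(z-A)^{-1}\,dz.
\end{equation*}

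Next I substitute the spectral decomposition $(z-A)^{-1}=\sum_{i=1}^{d_0}(z-\lambda_i)^{-1}E_A(\{\lambda_i\})$, summed over the distinct eigenvalues of $A$, and rearrange the product to obtain
\begin{equation*}
\sum_{i_0,\ldots,i_r=1}^{d_0}\Bigl[\frac{1}{2\pi i}\oint_\Gamma\frac{f(z)\,dz}{\prod_{k=0}^{r}(z-\lambda_{i_k})}\Bigr]E_A(\{\lambda_{i_0}\})B\cdots BE_A(\{\lambda_{i_r}\}).
\end{equation*}
A partial-fraction/residue calculation identifies each bracketed integral with the divided difference $f^{[r]}(\lambda_{i_0},\ldots,\lambda_{i_r})$, where coincident arguments are handled by continuity of divided differences (equivalently, by the derivative residue). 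Since $A_0=A_1=\cdots=A_r=A$ here, this is exactly \eqref{eq2} for polynomials, and \eqref{eq3} follows by taking traces.

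To extend from polynomials to general $f\in C^r(\R)$, fix a compact interval $I$ containing $\sigma(A+tB)$ for every $t$ in a bounded neighborhood of $0$, and apply Weierstrass approximation to $f^{(r)}$ together with $r$-fold integration to produce polynomials $p_n$ with $p_n^{(k)}\to f^{(k)}$ uniformly on $I$ for every $k=0,1,\ldots,r$. The right-hand side of \eqref{eq2} is continuous in $f$ in the $C^r$-norm, since $f\mapsto f^{[r]}$ is continuous from $C^r(I)$ into $C(I^{r+1})$ and the multiple operator integral \eqref{eq1} depends continuously on its symbol in the sup-norm. The main obstacle is verifying the analogous continuity of the left-hand side, i.e.\ that $t\mapsto f(A+tB)$ is genuinely $r$-times operator-norm differentiable at $t=0$ for every $f\in C^r(\R)$ and that this derivative is the $C^r$-limit of the polynomial derivatives. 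I would handle this using the Kato--Rellich theorem (Theorem \ref{kato}), which furnishes real-analytic eigenvalues $\lambda_k(t)$ and a real-analytic unitary $U(t)$ with $A+tB=U(t)\operatorname{diag}(\lambda_1(t),\ldots,\lambda_n(t))U(t)^{\ast}$; then $f(A+tB)=U(t)\operatorname{diag}(f(\lambda_k(t)))U(t)^{\ast}$ is manifestly $C^r$ in $t$ by the chain rule, and the desired $C^r$-continuity in $f$ follows from uniform convergence of $f(\lambda_k(t))$ and its $t$-derivatives up to order $r$ on a small interval around $0$, which is enough to pass the polynomial identity to the limit.
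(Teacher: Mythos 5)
The paper does not prove this statement at all: Theorem \ref{th1} is imported verbatim from \cite[Theorem 5.3.2]{SkTo19} and used as a black box, so your argument is necessarily a different route, and as far as I can tell it is a correct and self-contained one for the matrix case. The first half is the classical resolvent computation: the Neumann expansion identifies $\frac{1}{r!}\at{\frac{d^r}{dt^r}}{t=0}f(A+tB)$ with $\frac{1}{2\pi i}\oint_\Gamma f(z)\bigl[(z-A)^{-1}B\bigr]^r(z-A)^{-1}\,dz$, and the Hermite contour formula $f^{[r]}(\lambda_{i_0},\ldots,\lambda_{i_r})=\frac{1}{2\pi i}\oint_\Gamma f(z)\prod_{k=0}^{r}(z-\lambda_{i_k})^{-1}\,dz$, valid with arbitrary coincidences among the $\lambda_{i_k}$, converts this into \eqref{eq2} for polynomials. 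For the approximation step, the two continuity claims you need are both genuine: the bound $\sup|f^{[r]}|\le \|f^{(r)}\|_\infty/r!$ (from the Hermite--Genocchi simplex representation of the divided difference) gives $C^r$-continuity of the right-hand side, and your appeal to Theorem \ref{kato} handles the left-hand side, since writing $f(A+tB)=U(t)\,\mathrm{diag}\bigl(f(\lambda_k(t))\bigr)U(t)^*$ and expanding by Leibniz and Fa\`a di Bruno shows that $\at{\frac{d^r}{dt^r}}{t=0}f(A+tB)$ is a continuous linear expression in the values $f(\mu),f'(\mu),\ldots,f^{(r)}(\mu)$, $\mu\in\sigma(A)$, so the identity passes from $p_n$ to $f$. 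What your approach buys is an elementary, essentially complex-analytic proof that avoids the inductive machinery of \cite{SkTo19} entirely; what it gives up is generality, since both the Kato--Rellich diagonalization and the discrete spectral expansion of the resolvent are strictly finite-dimensional devices, whereas the cited theorem and its proof technique extend to the operator settings the paper needs in Section \ref{isoinfinite}.
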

However, we give the following proof of the formula of calculating the first derivative by using the Kato-Rellich theorem. Our proof is completely different from \cite{Ko84}. For a self-adjoint matrix $A\in M_n^{sa},$ the partial isometry appearing in the polar decomposition is denoted by $\text{sgn}(A).$
\begin{ppsn}
	Let $A,B\in M_n^{sa}$ and $1<p<\infty$, then 
	\begin{align}
		\at{\frac{d}{dt}}{t=0}\norm{A+tB}_p^p=p\Tr\big(B|A|^{p-1}\sgn(A)\big).
	\end{align}
\end{ppsn}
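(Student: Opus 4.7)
The plan is to combine the Kato--Rellich theorem (Theorem \ref{kato}) with the chain rule applied to $x\mapsto |x|^p$ and the first-order eigenvalue perturbation formula. First I would apply Theorem \ref{kato} at $t_0=0$ to obtain $\epsilon>0$, real-analytic functions $\lambda_1,\dots,\lambda_n:(-\epsilon,\epsilon)\to\mathbb{R}$ and a real-analytic unitary $U(t)=(u_{ij}(t))_{i,j=1}^n$ satisfying $U(t)^*(A+tB)U(t)=\mathrm{diag}(\lambda_1(t),\dots,\lambda_n(t))$. Since the Schatten $p$-norm is unitarily invariant, this reduces the statement to a scalar computation:
\begin{equation*}
\|A+tB\|_p^p=\Tr\bigl(|A+tB|^p\bigr)=\sum_{k=1}^n |\lambda_k(t)|^p.
\end{equation*}

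Next I would differentiate termwise. For $1<p<\infty$ the function $x\mapsto|x|^p$ is of class $C^1$ on $\mathbb{R}$ with derivative $p|x|^{p-1}\sgn(x)$ (setting $\sgn(0):=0$), so the chain rule gives that $|\lambda_k(t)|^p$ is differentiable at $t=0$ with
\begin{equation*}
\at{\frac{d}{dt}}{t=0}|\lambda_k(t)|^p=p|\lambda_k(0)|^{p-1}\sgn(\lambda_k(0))\,\lambda_k'(0).
\end{equation*}
The only delicate point is when $\lambda_k(0)=0$; there a brief check using the real-analytic expansion $\lambda_k(t)=ct^m+O(t^{m+1})$ with $m\geq 1$ shows $|\lambda_k(t)|^p=O(|t|^{mp})$ and hence the incremental quotient at $t=0$ tends to $0$ since $mp>1$, in agreement with the formula.

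The first-order perturbation formula $\lambda_k'(0)=\langle u_k(0),Bu_k(0)\rangle$, with $u_k(t)$ the $k$-th column of $U(t)$, is extracted directly: the identity $\lambda_k(t)=\langle u_k(t),(A+tB)u_k(t)\rangle$ obtained from the diagonal of $U(t)^*(A+tB)U(t)$, when differentiated and combined with $\frac{d}{dt}\langle u_k(t),u_k(t)\rangle\equiv 0$ and the self-adjointness of $A+tB$, collapses all terms involving $u_k'(t)$ and yields $\lambda_k'(t)=\langle u_k(t),Bu_k(t)\rangle$. Substituting into the previous display gives
\begin{equation*}
\at{\frac{d}{dt}}{t=0}\|A+tB\|_p^p=p\sum_{k=1}^n |\lambda_k(0)|^{p-1}\sgn(\lambda_k(0))\,\langle u_k(0),Bu_k(0)\rangle.
\end{equation*}

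To finish, I would use the spectral resolution $A=\sum_k\lambda_k(0)\,u_k(0)u_k(0)^*$, from which $|A|^{p-1}\sgn(A)=\sum_k |\lambda_k(0)|^{p-1}\sgn(\lambda_k(0))\,u_k(0)u_k(0)^*$, and recognize the sum above as $p\Tr\bigl(B|A|^{p-1}\sgn(A)\bigr)$ via cyclicity of the trace and $\Tr(Bu_k(0)u_k(0)^*)=\langle u_k(0),Bu_k(0)\rangle$. The main obstacle is justifying differentiability at $t=0$ when $A$ has $0$ in its spectrum, which is handled cleanly by the real-analytic parametrization provided by Kato--Rellich together with the $C^1$-regularity of $x\mapsto|x|^p$ for $p>1$.
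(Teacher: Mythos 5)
Your proposal is correct and follows essentially the same route as the paper: both diagonalize $A+tB$ via Kato--Rellich and exploit the unitarity of $U(t)$ (your identity $\frac{d}{dt}\langle u_k(t),u_k(t)\rangle=0$ is the scalar form of the paper's relation ${U^*}'(0)+U'(0)=0$) to reduce $\lambda_k'(0)$ to $\langle u_k(0),Bu_k(0)\rangle$ and hence to the trace formula. Your termwise treatment, including the explicit check of differentiability of $|\lambda_k(t)|^p$ at a vanishing eigenvalue, is if anything slightly more careful than the paper's operator-level functional-calculus computation.
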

\begin{proof}
	By Theorem \ref{kato}, there exist unitary matrix $U(t)$
	and a diagonal matrix $\lambda(t)$ such that $A+tB=U(t)^*\lambda(t)U(t)$ and $U(t),\lambda(t)$ are real-analytic on a neighborhood of $0$. Since $\|.\|_p$ is unitary invariant, without loss of any generality, we assume that $A$ is a real diagonal matrix and $U(0)=I$. By differentiating, the identity $U(t)^*U(t)=U(t)U(t)^*=I,$ we obtain
	\begin{equation}\label{makinze} {U^*}'(0)+U'(0)=0.
	\end{equation} Applying the standard functional calculus on $A+tB$ and differentiating, we have
	\begin{align*}
		\at{\frac{d}{dt}}{t=0}|A+tB|^p=&{U^*}'(0)|\lambda(0)|^pU(0)\\
		&+U^*(0)|\lambda(0)|^pU'(0)+pU^*(0)|\lambda(0)|^{p-1}\sgn{\lambda(0)}\lambda'(0)U(0).
	\end{align*} 
	Taking trace on both side of the above equation and applying \eqref{makinze}, we have
	\begin{align}\label{a9}
		\nonumber\Tr\big(\at{\frac{d}{dt}}{t=0}|A+tB|^p\big)=&\Tr\big({U^*}'(0)|\lambda(0)|^p
		+|\lambda(0)|^pU'(0)\big)+p\Tr\big(|\lambda(0)|^{p-1}\sgn{\lambda(0)}\lambda'(0)\big)\\
		\nonumber=&\Tr\big(({U^*}'(0) +U'(0))|\lambda(0)|^p\big)+p\Tr\big(\lambda'(0)|A|^{p-1}\sgn(A)\big)\\
		=&~p\Tr\big(\lambda'(0)|A|^{p-1}\sgn(A)\big).
	\end{align}
	Note that
	\begin{align}\label{a10}
		\nonumber&B=\at{\frac{d}{dt}}{t=0}(A+tB)={U^*}'(0)\lambda(0)+\lambda(0)~U'(0)+{\lambda'(0)}\\
		\implies ~&\lambda'(0)=B-({U^*}'(0)\lambda(0)+\lambda(0)~U'(0)).
	\end{align}
	
	Therefore from (\ref{a9}) and (\ref{a10}), and using the fact that $A,$ $\sgn(A)$ and $|A|$ are mutually commuting and \eqref{makinze}, we have
	\begin{align*}
		\at{\frac{d}{dt}}{t=0}\norm{A+tB}_p^p=&~p\Tr\big(\lambda'(0)|A|^{p-1}\sgn(A)\big)\\
		=&~p\Tr\Big\{ B|A|^{p-1}\sgn(A)- \left({U^*}'(0)\lambda(0)+\lambda(0)U'(0)\right)|A|^{p-1}\sgn(A)\Big\}\\
		=&~p\Tr\left( B|A|^{p-1}\sgn(A)\right)-p\Tr\bigg(\left({U^*}'(0)+U'(0)\right)A|A|^{p-1}\sgn(A) \bigg)\\
		=&~p\Tr( B|A|^{p-1}\sgn(A)).
	\end{align*}
	This completes the proof of the proposition.
\end{proof}

\subsection{Reduction to self-adjoint case:} We now state the following lemma, which reduces our problem to the self-adjoint case.
\begin{lma}\label{reduct}
	Let $1\leq p,q\leq \infty.$ Let $T:\ell_q^2\to S_p(\mathcal H)$ with $T(e_1)=A,$ $T(e_2)=B$ be an isometry. Then \[T_{new}:\ell_q^2\to S_p(\mathcal H\oplus\mathcal H)\text{ defined by } T_{new}(z,w)\colon=zA_{new}+wB_{new}\] is again an isometry, where \[A_{new}(\zeta_1\oplus\zeta_2)\colon=2^{-\frac{1}{p}}\Big(A\zeta_2\oplus A^*\zeta_1\Big)~ \text{and}~ B_{new}(\zeta_1\oplus\zeta_2)\colon=2^{-\frac{1}{p}}\Big(B\zeta_2\oplus B^*\zeta_1\Big).\]
\end{lma}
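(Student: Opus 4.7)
The plan is to recognize $A_{new}$ and $B_{new}$ as (scalar multiples of) the standard $2\times 2$ off-diagonal self-adjoint dilations of $A$ and $B$, and to read off the $S_p$-norm of an arbitrary linear combination from the block structure. Concretely, I would identify $A_{new}$ with $2^{-1/p}\bigl(\begin{smallmatrix} 0 & A \\ A^* & 0 \end{smallmatrix}\bigr)$ and $B_{new}$ with $2^{-1/p}\bigl(\begin{smallmatrix} 0 & B \\ B^* & 0 \end{smallmatrix}\bigr)$ acting on $\mathcal H\oplus\mathcal H$. For scalars $z,w\in\mathbb C$, setting $X:=zA+wB$ and $Y:=zA^*+wB^*$, one then has
\[
zA_{new}+wB_{new}=2^{-1/p}\begin{pmatrix} 0 & X \\ Y & 0 \end{pmatrix}.
\]

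The next step is to compute the modulus squared, which block-diagonalises as $2^{-2/p}\,\mathrm{diag}(Y^*Y,X^*X)$. Hence the singular values of $zA_{new}+wB_{new}$ are exactly $2^{-1/p}$ times the disjoint union of the singular values of $X$ and those of $Y$, so that
\[
\|zA_{new}+wB_{new}\|_p^p=\tfrac12\bigl(\|X\|_p^p+\|Y\|_p^p\bigr)\qquad (1\le p<\infty),
\]
with the obvious replacement $\|zA_{new}+wB_{new}\|_\infty=\max(\|X\|_\infty,\|Y\|_\infty)$ in the endpoint case $p=\infty$.

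The final step is to exploit that $T$ is a $\mathbb C$-linear isometry. This immediately gives $\|X\|_p=\|T(z,w)\|_p=\|(z,w)\|_q$. The key observation is that $Y$ is \emph{not} $X^*$, but rather $Y=(\bar z A+\bar w B)^*=T(\bar z,\bar w)^*$, so
\[
\|Y\|_p=\|T(\bar z,\bar w)\|_p=\|(\bar z,\bar w)\|_q=\|(z,w)\|_q.
\]
Substituting these two equalities into the formula above yields $\|zA_{new}+wB_{new}\|_p=\|(z,w)\|_q$ for all $z,w$, which is precisely the assertion that $T_{new}$ is an isometry; the case $p=\infty$ is handled identically.

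There is really only one conceptual subtlety in the argument, which is the need to track complex conjugation carefully in the lower-left block: the off-diagonal dilation is $\mathbb R$-linear in $z,w$ only in the sense that both $X$ and $Y$ appear, and one must use the conjugation invariance of the $\ell_q^2$-norm ($\|(\bar z,\bar w)\|_q=\|(z,w)\|_q$) to equate $\|Y\|_p$ with $\|X\|_p$. Everything else is a direct computation with $2\times 2$ block matrices and the singular value characterisation of the Schatten $p$-norm.
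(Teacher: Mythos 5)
Your proof is correct and follows essentially the same route as the paper: both reduce $zA_{new}+wB_{new}$ to a block off-diagonal matrix whose Schatten norm splits as $2^{-1/p}(\|X\|_p^p+\|Y\|_p^p)^{1/p}$ (the paper does this by a column-swapping unitary, you by block-diagonalising the modulus, which amounts to the same thing). In fact you are slightly more careful than the paper at the one delicate point, namely that $Y=zA^*+wB^*=(\bar z A+\bar w B)^*$ is not $X^*$, so that $\|Y\|_p=\|(\bar z,\bar w)\|_q=\|(z,w)\|_q$ requires the isometry applied at $(\bar z,\bar w)$ together with conjugation invariance of the $\ell_q$-norm; the paper's displayed identity $\|zA^*+wB^*\|_p=\|zA+wB\|_p$ silently uses exactly this.
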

\begin{proof}
	Note that for all $(z,w)\in \mathbb{C}^2,$ we have
	
	\[T_{new}(z,w)=2^{-\frac{1}{p}}\begin{bmatrix}
		0&zA+wB\\
		zA^*+wB^*&0
	\end{bmatrix}.\] Since Schatten norms are invariant under multiplication by unitary operators, by changing columns, we have from above
	\[\|T_{new}(z,w)\|_p=2^{-\frac{1}{p}}\Big\|\begin{bmatrix}
		zA+wB&0\\
		0&zA^*+wB^*
	\end{bmatrix}\Big\|_p=2^{-\frac{1}{p}}(\|zA+wB\|_p^p+\|zA+wB\|_p^p)^{\frac{1}{p}}.\] Therefore, we have that
	\[\|T_{new}(z,w)\|_p=2^{-\frac{1}{p}}(2\|(z,w)\|_q^p)^{\frac{1}{p}}=\|(z,w)\|_q.\]
	This completes the proof of the lemma.
\end{proof}
\begin{dfn}\label{hp}
	Let $1\leq p\neq q\leq \infty$. Let $A,B\in \mathcal{B}(\ell_2)$ or $\mathcal{B}(\ell_2^n).$ We say $(A,B)$ has $(\mathbf{I}_{q,p})$ if
	\begin{itemize}
		\item[$\bullet$] with respect to the standard basis, $A$ is a diagonal matrix with all entries real and $B$ is self-adjoint,
		\item[$\bullet$] $\norm{A}_p=\norm{B}_p=1$,
		\item[$\bullet$] $\norm{A+tB}_p=\|(1,t)\|_q$, for all $t\in\R$.
	\end{itemize}
\end{dfn}
Note that, if we have an isometric embedding of $S^m_q$ into $S_p(\mathcal H),$ with $\mathcal H=\ell_2$ or $\ell_2^n$, $ 1\leq p\neq q\leq \infty$ and $n\geq m\geq 2,$ then by Lemma \ref{reduct} we can find a pair of self-adjoint operators $(A,B)$ where $A, B\in B(\mathcal H\oplus_2\mathcal H)$ and $(A,B)$ has $(\mathbf{I}_{q,p})$. Therefore, nonexistence of isometric embedding will follow from nonexistence of $(A,B)$ having $(\mathbf{I}_{q,p})$. This fact will be crucially used in the rest of the paper.\\
\par The following lemma will be useful in later sections. 
\begin{lma}\label{l}
	Let $1<p\neq q<\infty$ and $(A,B)$ has $(\mathbf{I}_{q,p})$. 
	Then $AB\neq 0$. 
\end{lma}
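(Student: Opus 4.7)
The plan is to argue by contradiction: assume $AB = 0$ and derive $p = q$. The strategy in one line is that vanishing of $AB$, together with self-adjointness, forces $A$ and $B$ to act on mutually orthogonal subspaces, so the Schatten norm of $A+tB$ obeys a $p$-parallelogram law rather than the $q$-law demanded by $(\mathbf{I}_{q,p})$.

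First I would extract the orthogonal-support structure. Since $A$ is diagonal with real entries it is self-adjoint, so from $AB=0$ we get $BA = (AB)^* = 0$, and hence $\text{Range}(B) \subseteq \ker(A)$ while, by self-adjointness of $B$ again, $\text{Range}(A) \subseteq \ker(B)$. Because $A$ is diagonal in the standard basis, its support projection is simply the coordinate projection onto $I_0 = \{i : A_{ii} \neq 0\}$. Orthogonality of the support projections $s(A)$ and $s(B)$ then forces $B$ to vanish on the subspace $\overline{\text{span}}\{e_i : i \in I_0\}$ both on the left and on the right, giving the block decomposition $A = A' \oplus 0$ and $B = 0 \oplus B''$ with respect to the splitting $\mathcal{H} = \overline{\text{span}}\{e_i : i \in I_0\} \oplus \overline{\text{span}}\{e_i : i \in I_0\}^{\perp}$.

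With this in hand the computation is short: $A + tB = A' \oplus tB''$, and since the singular values of a block-diagonal operator are the union of those of the blocks,
\[
\|A+tB\|_p^p \;=\; \|A'\|_p^p + |t|^p\,\|B''\|_p^p \;=\; \|A\|_p^p + |t|^p\,\|B\|_p^p \;=\; 1 + |t|^p
\]
for every $t\in\R$, using the normalizations from $(\mathbf{I}_{q,p})$. On the other hand $(\mathbf{I}_{q,p})$ also asserts $\|A+tB\|_p = (1+|t|^q)^{1/q}$, so evaluating at $t=1$ gives $2^{1/p} = 2^{1/q}$, whence $p = q$, contrary to hypothesis. Therefore $AB\neq 0$.

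I do not anticipate any substantive obstacle here; the lemma is essentially a bookkeeping device that eliminates the degenerate block-diagonal case, clearing the way for the heavier Kato--Rellich and multiple-operator-integral arguments, which implicitly require $A$ and $B$ to interact nontrivially, in subsequent sections.
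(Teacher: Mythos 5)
Your proof is correct and follows essentially the same route as the paper: assume $AB=0$, use self-adjointness to get $BA=0$ and orthogonal supports (the paper phrases this as simultaneous diagonalizability), conclude $\|A+tB\|_p^p = 1+|t|^p$, and contradict $\|A+tB\|_p=(1+|t|^q)^{1/q}$ since $p\neq q$. Your evaluation at $t=1$ is a clean way to finish; the paper simply notes the two functions of $t$ cannot coincide identically.
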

\begin{proof}
	On contrary, we assume that $AB=0$ and therefore $AB=BA=0$. Thus $A$ and $B$ are simultaneously diagonalizable and hence $\norm{A+tB}_p^p = 1+|t|^p$ because of the fact that $AB=0$ and $\|A\|_p=\|B\|_p=1$. This implies that 
	\[(1+|t|^q)^{1/q}=(1+|t|^p)^{1/p} \quad \text{for all}\quad t\in\R,\] which is not possible since $p\neq q$. This completes the proof. 
\end{proof} 

\section{Embeddability of $S_q^m$ into $S_\infty^n$:}\label{endpoint}
Let $q\in(1,\infty).$ Let $f_{q}:\mathbb R \to \mathbb R$ be the function defined by $$f_{q}(t)=\Big(1+|t|^q\Big)^{\frac{1}{q}}.$$ In the following lemma, we want to compute the derivative of $f_{q}$ with respect to $t.$
\begin{lma}\label{derivative of f} Let  $q\in(1,\infty)\setminus\mathbb N$ and $n\in\mathbb N$ be an integer. Then, there exists a smooth function $g_n:(-\infty,\infty)\to\mathbb R$ with $g_n(0)\neq 0,$ such that
	\begin{itemize}
		\item[(i)] for all $t\in(0,\infty),$ we have \[f_q^{(n)}(t)=t^{q-n}(1+t^q)^{\frac{1}{q}-n}g_n(t^q),\]
		\item[(ii)] and for all $t\in(-\infty,0),$ we have \[f_q^{(n)}(t)=(-1)^n(-t)^{q-n}(1+(-t)^q)^{\frac{1}{q}-n}g_n((-t)^q).\]
	\end{itemize}
\end{lma}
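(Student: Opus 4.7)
The key observation is that $f_q$ is an even function, since $f_q(t)=(1+|t|^q)^{1/q}=f_q(-t)$. Therefore $f_q^{(n)}(-t)=(-1)^n f_q^{(n)}(t)$, and once (i) is established, (ii) follows immediately by applying (i) at $-t>0$. So the plan is to focus on (i) and prove it by induction on $n$, while simultaneously tracking $g_n(0)$ through a recursion.

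For the base case $n=1$, a direct computation for $t>0$ gives
\[
f_q'(t)=\tfrac{1}{q}(1+t^q)^{\frac{1}{q}-1}\cdot qt^{q-1}=t^{q-1}(1+t^q)^{\frac{1}{q}-1},
\]
so one may take $g_1\equiv 1$, which is smooth on $\mathbb R$ with $g_1(0)=1\neq 0$. For the inductive step, assume $f_q^{(n)}(t)=t^{q-n}(1+t^q)^{\frac{1}{q}-n}g_n(t^q)$ for $t>0$. Differentiating once more using the product rule and then factoring out $t^{q-n-1}(1+t^q)^{\frac{1}{q}-n-1}$ from all three resulting terms leads to
\[
f_q^{(n+1)}(t)=t^{q-n-1}(1+t^q)^{\frac{1}{q}-n-1}\,g_{n+1}(t^q),
\]
where the recursion for $g_{n+1}$ reads
\[
g_{n+1}(s)=(q-n)g_n(s)+(1-n)(q+1)\,s\,g_n(s)+q\,s(1+s)\,g_n'(s).
\]
(The coefficient $(1-n)(q+1)$ arises from simplifying $(q-n)+(1-nq)$ after one writes $(1+t^q)^{\frac{1}{q}-n}=(1+t^q)\cdot(1+t^q)^{\frac{1}{q}-n-1}$ in the first term.)

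Since $g_1$ is a polynomial, the recursion shows inductively that every $g_n$ is a polynomial; in particular each $g_n$ is smooth on all of $\mathbb R$. Setting $s=0$ in the recursion yields the simple relation $g_{n+1}(0)=(q-n)g_n(0)$, hence
\[
g_n(0)=\prod_{k=1}^{n-1}(q-k)\qquad(n\geq 2),
\]
together with $g_1(0)=1$. The crucial observation is now that the hypothesis $q\in(1,\infty)\setminus\mathbb N$ guarantees $q-k\neq 0$ for every positive integer $k$, so $g_n(0)\neq 0$ for every $n\in\mathbb N$, completing (i). Step (ii) follows from evenness as noted above.

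The argument is essentially bookkeeping; the only substantive point is verifying the algebraic simplification that produces the clean recursion (so that the polynomial nature and the value $g_n(0)=\prod_{k=1}^{n-1}(q-k)$ are transparent), and recognizing that the assumption $q\notin\mathbb N$ is precisely what prevents $g_n(0)$ from vanishing at some step.
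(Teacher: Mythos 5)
Your proposal is correct and follows essentially the same route as the paper: induction on $n$ with the base case $g_1\equiv 1$, differentiation and factoring of $t^{q-n-1}(1+t^q)^{\frac{1}{q}-n-1}$ to obtain the recursion for $g_{n+1}$ (your simplified coefficient $(1-n)(q+1)$ agrees with the paper's $(q-n)+(1-nq)$), and the key identity $g_{n+1}(0)=(q-n)g_n(0)\neq 0$ since $q\notin\mathbb N$. Your explicit evenness argument for (ii) and the observation that each $g_n$ is a polynomial are minor elaborations of steps the paper leaves to the reader.
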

\begin{proof} We only prove (i). One can easily deduce (ii) from (i). We proceed by induction. Note that we have 
	\begin{equation*}
		f_q^{(1)}(t)=\frac{1}{q}(1+t^q)^{\frac{1}{q}-1}qt^{q-1}=t^{q-1}(1+t^q)^{\frac{1}{q}-1}.
	\end{equation*}
	By denoting $g_1\equiv 1,$ we have the required form of $f_q^{(1)}.$ Suppose the hypothesis is true for some $n\in\mathbb N.$ Then, we have 
	\begin{equation*}
		\begin{split}
			f_q^{(n+1)}(t)&=(q-n)t^{q-n-1}(1+t^q)^{\frac{1}{q}-n}g_n(t^q)\\
			&+t^{q-n}(\frac{1}{q}-n)(1+t^q)^{\frac{1}{q}-n-1}qt^{q-1}g_n(t^q)\\
			&+t^{q-n}(1+t^q)^{\frac{1}{q}-n}g_n^{\prime}(t^q)qt^{q-1}.
		\end{split}
	\end{equation*} 	
	Therefore, from the above identity we have
	\begin{equation*}
		f_q^{(n+1)}(t)=t^{q-n-1}(1+t^q)^{\frac{1}{q}-n-1}\Big((q-n)(1+t^q)g_n(t^q)+t^q(1-nq)g_n(t^q)+qt^q(1+t^q)g_n^{\prime}(t^q)\Big).
	\end{equation*}
	If we denote \[g_{n+1}(t):=(q-n)(1+t)g_n(t)+t(1-nq)g_n(t)+qt(1+t)g_n^{\prime}(t),\] then clearly, $g_{n+1}$ is smooth and $g_{n+1}(0)=(q-n)g_n(0)\neq 0$ as $q\notin\mathbb N.$ This completes the proof of the lemma.
\end{proof}	
For $q\geq 1,$ let $g, h:\mathbb C \to \mathbb C$ be two complex (may be multi-valued) functions defined by $$g(z)=1+z^q~~{\rm{ and}}~~~h(z)=(1+z^q)^{\frac{1}{q}}.$$
\begin{ppsn}\label{analytic prom}
	Let $q\in(1,\infty).$ Then the map $f_{q}$ is real-analytic on $(0,\infty)$ and $(-\infty,0).$
\end{ppsn}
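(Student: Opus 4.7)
The plan is to prove real-analyticity of $f_q$ on $(0,\infty)$ and $(-\infty,0)$ by exhibiting $f_q$ as a restriction of a holomorphic function defined on an open neighborhood of each of these half-lines in $\mathbb{C}$. Concretely, I would use the complex functions $g$ and $h$ introduced just before the statement, select suitable branches on an open set containing the positive real axis, and invoke the standard fact that a function $\mathbb{R}\supseteq I \to \mathbb{R}$ is real-analytic if and only if it extends holomorphically to an open neighborhood of $I$ in $\mathbb{C}$.

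First I would handle the case $t \in (0,\infty)$. Let $\Omega := \mathbb{C}\setminus (-\infty,0]$ and set $z^q := \exp(q\,\mathrm{Log}\, z)$ using the principal branch of the logarithm; this is holomorphic on $\Omega$. Hence $g(z) = 1+z^q$ is holomorphic on $\Omega$ and satisfies $g(t) = 1+t^q \geq 1$ for all $t \in (0,\infty)$. By continuity, the set $U := \{ z \in \Omega : g(z) \notin (-\infty,0]\}$ is an open neighborhood of $(0,\infty)$ in $\mathbb{C}$. On $U$ we may therefore define
\[
h(z) \;:=\; \exp\!\left(\tfrac{1}{q}\,\mathrm{Log}\, g(z)\right),
\]
which is holomorphic on $U$, and $h(t) = (1+t^q)^{1/q} = f_q(t)$ for every $t \in (0,\infty)$. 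Since $f_q$ extends holomorphically to an open neighborhood of $(0,\infty)$, it is real-analytic there.

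For the case $t \in (-\infty,0)$, I would simply invoke the evenness of $f_q$: since $|t|^q = |{-t}|^q$, we have $f_q(t) = f_q(-t)$ for all $t \in \mathbb{R}$. The map $t \mapsto -t$ is a real-analytic diffeomorphism sending $(-\infty,0)$ onto $(0,\infty)$, so real-analyticity of $f_q$ on $(0,\infty)$ transfers to real-analyticity on $(-\infty,0)$. Equivalently, one repeats the argument of the previous paragraph on the reflected neighborhood $-U$.

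There is no serious obstacle here; the only point to check carefully is that the branches of logarithm (first for $z^q$, then for $g(z)^{1/q}$) are chosen on honest open neighborhoods of the half-line in question, which is ensured by the strict positivity $g(t)\geq 1$ on $(0,\infty)$. Note also that the statement is restricted to $(0,\infty)$ and $(-\infty,0)$ precisely because at $t=0$ the function $|t|^q$ is only $C^{\lfloor q\rfloor}$ for non-integer $q$ (as already reflected in Lemma \ref{derivative of f}), so $f_q$ is generally not real-analytic at the origin.
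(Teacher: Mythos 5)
Your proof is correct and follows essentially the same route as the paper: both exhibit $f_q$ on $(0,\infty)$ as the restriction of the holomorphic composition of $z\mapsto 1+z^q$ with a branch of $w\mapsto w^{1/q}$ on an open neighborhood of the half-line, the only cosmetic difference being that you build one global neighborhood while the paper argues pointwise via the open mapping theorem. Your use of evenness to transfer the result to $(-\infty,0)$ matches the paper's remark that this case follows the same way.
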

\begin{proof}
	We only prove that $f_q$ is real-analytic on $(0,\infty).$ Let $\mathbb H=\{z\in \mathbb C: z=x+iy, x>0\}.$ It is clear that the map $z\mapsto z^q$ is analytic on $\mathbb C\setminus (-\infty,0].$ Fix $a \in (0,\infty)$ and choose $r>0$ so that the open disc, centered at $a$ with radius $r>0,$ denoted by $D(a,r)$ is contained in $\mathbb H.$ Since $g$ is a non-constant complex-analytic function on $\mathbb H$ by open mapping theorem, $g(D(a,r))$ is also open. Since $1+a^q\in g(D(a,r)),$ we can choose an open ball with center at $1+a^q$ of radius $r^{\prime}>0$ such that $D(1+a^q,r^\prime)\subseteq g(D(a,r))\cap\mathbb H.$   Set $U=g^{-1}\left(D(1+a^q,r^{\prime})\right).$ Since $g$ is continuous, $U$ is an open set containing $a.$ Note that the map $H(z)=z^{\frac{1}{q}}$ is complex-analytic on $\mathbb H.$ Therefore, $h=H\circ g|_U$ is complex-analytic on $U.$ This implies that $f_q$ is real-analytic at $a.$ Since $a$ is an arbitrary point in $(0,\infty),$ $f_q$ is real-analytic on $(0,\infty).$ This completes the proof of the proposition.
\end{proof}

\begin{thm}\label{isometric}
Let $1<q<\infty$. If there exists a linear isometry of $S^{m}_{q}$ into $S^{n}_{\infty}$ for $n\geq m\geq 2$, then we must have $q=2$.
\end{thm}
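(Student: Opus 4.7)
By Lemma \ref{reduct} combined with Definition \ref{hp}, the existence of an isometric embedding $S_q^m \hookrightarrow S_\infty^n$ produces a pair $(A,B)$ of self-adjoint matrices satisfying $(\mathbf{I}_{q,\infty})$; my goal is to show this forces $q=2$. Set $f_q(t) := (1+|t|^q)^{1/q}$, so that $\|A+tB\|_\infty = f_q(t)$ for every $t\in\mathbb R$ and $\|A\|_\infty = 1$. Apply the Kato--Rellich theorem (Theorem \ref{kato}) to the analytic family $A+tB$ to obtain real-analytic eigenvalues $\lambda_1,\ldots,\lambda_N$ and orthonormal eigenvectors $u_1,\ldots,u_N$ on some interval $(-\epsilon,\epsilon)$. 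Since eigenvalues with $|\lambda_k(0)|<1$ lie strictly below the norm for $|t|$ small, after shrinking $\epsilon$ and (if necessary) replacing $(A,B)$ by $(-A,-B)$, one can single out an index $k_0$ with $\lambda_{k_0}(0)=1$ such that $\lambda_{k_0}(t) = f_q(t)$ on some subinterval $(0,\epsilon_0)$.

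\emph{Case $q\notin\mathbb N$.} For any integer $n>q$, Lemma \ref{derivative of f} yields $f_q^{(n)}(t) = t^{q-n}(1+t^q)^{1/q-n}g_n(t^q)$ with $g_n(0)\neq 0$, so $|f_q^{(n)}(t)|\to\infty$ as $t\to 0^+$. However $\lambda_{k_0}$ is real-analytic on $(-\epsilon,\epsilon)$, so its $n$-th derivative is continuous and finite at $0$. This contradicts the identity $\lambda_{k_0}^{(n)} = f_q^{(n)}$ on $(0,\epsilon_0)$, ruling out every non-integer $q$.

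\emph{Case $q\in\mathbb N$, $q\geq 3$.} Here $f_q$ is $C^{q-1}$ (even real-analytic when $q$ is even), so derivative blow-up is unavailable and a different tack is required. Analytic continuation from $(0,\epsilon_0)$ gives $\lambda_{k_0}(t) = 1 + \tfrac{t^q}{q} + O(t^{2q})$ on $(-\epsilon,\epsilon)$, whence $\lambda_{k_0}'(0) = \lambda_{k_0}''(0) = 0$. Write $w := u_{k_0}(0)\in\ker(A-I)$, let $P$ be the spectral projection of $A$ onto $\ker(A-I)$, and set $M := (I-A)|_{\operatorname{ran}(I-P)}$, a positive definite operator. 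First-order perturbation gives $\langle Bw,w\rangle = \lambda_{k_0}'(0) = 0$; choosing the Kato--Rellich basis of $\ker(A-I)$ to diagonalise $PBP$ further forces $PBw = 0$. Differentiating $(A+tB)u_{k_0}(t) = \lambda_{k_0}(t)u_{k_0}(t)$ at $t=0$ yields $(A-I)u_{k_0}'(0) = -Bw$, and a second differentiation produces the identity
\[
\lambda_{k_0}''(0) \;=\; 2\,\langle M^{-1}Bw,Bw\rangle,
\]
whose vanishing forces $Bw = 0$. Consequently $(A+tB)w = w$ for every $t$, so $w$ is a $t$-independent eigenvector of $A+tB$ with eigenvalue $1$. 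But $\lambda_{k_0}(t) = (1+t^q)^{1/q} > 1$ for $t\in(0,\epsilon_0)$, so $u_{k_0}(t)$ and $w$ correspond to distinct eigenvalues of the self-adjoint operator $A+tB$ and must be orthogonal. This yields $\|u_{k_0}(t)-w\|^2 = 2$, contradicting the Kato--Rellich continuity $u_{k_0}(t)\to w$ as $t\to 0^+$.

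The crux is Case 2: extracting the algebraic constraint $Bw = 0$ from the vanishing of $\lambda_{k_0}''(0)$ demands the precise second-order perturbation identity displayed above, together with a judicious choice of Kato--Rellich frame to handle the possibly degenerate eigenvalue $1$ of $A$. Once $Bw = 0$ is secured, the eigenvector-orthogonality obstruction closes the argument uniformly for all integers $q\geq 3$, and combined with Case 1 this completes the proof that $q=2$.
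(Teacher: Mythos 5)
Your proof is correct, and it diverges from the paper's in an instructive way. For non-integer $q$ you run the Kato--Rellich/derivative-blow-up argument (Lemma \ref{derivative of f} plus the identity theorem) over the whole range $(1,\infty)\setminus\mathbb N$, whereas the paper deploys that machinery only for $1<q<2$ and disposes of every $q>2$ (integer or not) by a far more elementary observation: testing the operator norm against a unit vector $x$ with $Ax=\pm x$ gives $1+2t\,\mathrm{Re}\langle Bx,x\rangle+t^2\|Bx\|^2\le (1+|t|^q)^{2/q}=1+\tfrac{2}{q}|t|^q+O(|t|^{2q})$, and since $|t|^q=o(t^2)$ for $q>2$ this forces $Bx=0$; hence $B=(I-P)B(I-P)$ for $P$ the $\pm1$-eigenprojection, so $\|A+tB\|_\infty=1$ for small $t$, a contradiction needing no perturbation theory at all. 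Your handling of the remaining integers $q\ge 3$ is genuinely different: you read off $\lambda_{k_0}'(0)=\lambda_{k_0}''(0)=0$ from the Taylor expansion of $(1+t^q)^{1/q}$, establish the second-order perturbation identity $\lambda_{k_0}''(0)=2\langle M^{-1}Bw,Bw\rangle\ge 0$ with equality iff $Bw=0$ (this checks out: $PBw=\lambda_{k_0}'(0)w=0$ is automatic upon differentiating the eigenvalue equation and projecting onto $\ker(A-I)$, and $M=(I-A)|_{\operatorname{ran}(I-P)}$ is positive definite precisely because $\|A\|_\infty=1$), and close with the orthogonality of $u_{k_0}(t)$ to the frozen eigenvector $w$ against Kato--Rellich continuity. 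Your route buys a treatment uniform in flavor across all cases; the paper's split at $q=2$ buys brevity, since its $q>2$ case avoids Theorem \ref{kato} entirely. One step you state too quickly: producing a single index $k_0$ with $\lambda_{k_0}=f_q$ on an interval $(0,\epsilon_0)$ does not follow merely from discarding branches with $|\lambda_k(0)|<1$, because several branches with $|\lambda_k(0)|=1$ could alternate in attaining the maximum; one needs a Baire-category argument on the closed sets $\{t:|\lambda_k(t)|=f_q(t)\}$, constancy of sign on the resulting subinterval, and then the identity theorem (via Proposition \ref{analytic prom}) to propagate the equality down to $t=0^+$. This is exactly what the paper's Sub-case 1 and Sub-case 2 accomplish, and it is routine to fill in, so it is a gap of exposition rather than of substance.
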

\begin{proof}
	Suppose there exists a linear isometry of $S^{m}_{q}$ into $S^{n}_{\infty}$ for $n\geq m\geq 2$, then by Lemma \ref{reduct}, we may assume that there exist $A,B\in S_\infty^n$ such that $(A,B)$ has $(\mathbf{I}_{q,\infty}).$
	
	\noindent{\bf{Case-$\bf{1}$}}: Let $q>2$. Let $x$ be a unit eigenvector corresponding to eigenvalue $1$ or $-1$ of the diagonal self-adjoint matrix $A$, that is $Ax=x$ or $Ax=-x$ with $\|x\|=1$. Then for each $t\in\R$, $(\mathbf{I}_{q,\infty})$ implies that 
	\begin{align*}
		&\|(A+tB)x\|\leq \|(A+tB)\|_\infty=(1+|t|^q)^{1/q}\\
		\implies& \big[1+t(\la Ax,Bx\ra+\la Bx,Ax\ra )+t^2\|Bx\|^2\big]^q\leq 1+2|t|^q+|t|^{2q},
	\end{align*} 
	which forces that $Bx=0$. Let $P$ be the orthogonal projection on the eigenspaces corresponding to the eigenvalues $1$ and $-1$ of $A$. It is easy to check that $B=(I-P)B(I-P)$ and $\|(I-P)A(I-P)\|_\infty<1$. Therefore, for small $t$, we have
	\[\|A+tB\|_\infty=\max\big\{\|PAP\|_\infty,\|(I-P)(A+tB)(I-P)\|_\infty\big\}=1,\] which is a contradiction as $\|A+tB\|_\infty=(1+|t|^q)^{1/q}$ is true for all $t\in\R$.
	
	\noindent{\bf{Case-$\bf{2}$}}: Let $1<q<2$. In this case we show that $q$ must be an integer, in other word $q$ must be equal to $2$. Suppose not. By Theorem \ref{kato} we have
	\begin{equation}\label{A+tB op}
		\|A+tB\|_{\infty}=(1+|t|^q)^{\frac{1}{q}}=\max_{1\leq i\leq n}|\lambda_{i}(t)|.
	\end{equation}
	We will show that there do not exist real-analytic functions $\lambda_1,\dots,\lambda_n$ on a neighborhood of zero, say $(-\epsilon,\epsilon)$ with $\epsilon>0$ such that
	\begin{equation}\label{udiyed}
		(1+|t|^q)^{\frac{1}{q}}=\max_{1\leq i\leq n}|\lambda_{i}(t)|, ~\text{for all},~ t\in (-\epsilon,\epsilon).
	\end{equation}
	Note that in equation \eqref{udiyed} if $\{t\in(-\epsilon,\epsilon):|\lambda_r(t)|=|\lambda_s(t)|\}=(-\epsilon,\epsilon)$ for some $1\leq r\neq s\leq n$, then we can rewrite equation \eqref{udiyed} as
	\begin{equation}\label{eq 3.4}
		(1+|t|^q)^{\frac{1}{q}}=\max\{|\lambda_1(t)|,\dots,|\lambda_{r-1}(t)|,|\lambda_{r+1}(t)|,\dots,|\lambda_n(t)|\}~~{\rm{for ~~all}}~~t~~{\rm{with}} ~|t|<\epsilon. 
	\end{equation}
	Therefore, in view of \eqref{eq 3.4}, we can always assume that in equation \eqref{udiyed}, we also have that 
	\begin{equation}\label{eq 3.5}
		\{t\in(-\epsilon,\epsilon):|\lambda_i(t)|=|\lambda_j(t)|\}\neq(-\epsilon,\epsilon)\ \text{for all } 1\leq i\neq j\leq n.
	\end{equation}
	
	Let $1\leq i_0\leq n$ be the largest integer such that there exists an increasing sequence $k_1<k_2<\cdots<k_{i_0}$ with $1\leq k_j\leq n$ for all $1\leq j\leq i_0$ and the set \[\mathcal A_{k_1,\dots,k_{i_0}}:=\{t \in (-\epsilon,\epsilon):|\lambda_{k_r}(t)|\neq  |\lambda_{k_s}(t)|~\text{for all}, 1\leq r\neq s\leq i_0\}\] is non-empty. Clearly, by \eqref{eq 3.5} we must have $i_0\geq 1.$ 
	
	Let us choose a set $\mathcal A_{k_1,\dots,k_{i_0}}$ defined as above, which is non-empty. Note that the set $\mathcal A_{k_1,\dots,k_{i_0}}$ is open as $\mathcal A_{k_1,\dots,k_{i_0}}=\cap_{1\leq r\neq s\leq i_0}\{t \in (-\epsilon,\epsilon):|\lambda_{k_r}(t)|\neq  |\lambda_{k_s}(t)|\}$ and it is easy to see from continuity of the maps $\lambda_i$ that each of the sets $\{t \in (-\epsilon,\epsilon):|\lambda_{k_r}(t)|\neq  |\lambda_{k_s}(t)|\}$ is open for all $1\leq r\neq s\leq n$. Let us consider the following cases.
	
	\noindent{\bf{Sub-case-$\bf{1}$}}: Suppose $i_0=n.$ Then, we have that the set $\mathcal A_{1,\dots,n}$ is non-empty. Suppose $(a,b)\subseteq\mathcal A_{1,\dots,n}.$ Moreover, we choose $(a,b)$ so that either $(a,b)\subseteq(-\epsilon,0)$ or $(0,\epsilon).$ Let $t_0\in(a,b).$ Then we must have for some $1\leq l\leq n,$ \[|\lambda_l(t_0)|>\max_{1\leq i\neq l \leq n}|\lambda_i(t_0)|.\] By continuity, the same is true for some open interval $(c,d)\subseteq(a,b)$, i. e. we have for all $t\in(c,d)$
	\[|\lambda_l(t)|>\max_{1\leq i\neq l \leq n}|\lambda_i(t)|.\] Therefore, it follows from \eqref{udiyed} that 
	\begin{equation}\label{(aj,bj)}
		|\lambda_{l}(t)|=(1+|t|^q)^{\frac{1}{q}},~~t\in(c, d).
	\end{equation}
	Note that it follows from \ref{(aj,bj)} that on some smaller interval $(c^\prime,d^\prime)\subseteq (c,d)$ we must have $\lambda_{l}(t)=(1+|t|^q)^{\frac{1}{q}},\ t\in(c^\prime,d^\prime)$ or $\lambda_{l}(t)=-(1+|t|^q)^{\frac{1}{q}},\ t\in(c^\prime,d^\prime).$ Therefore, as the map $\lambda_l$ is analytic, and also from Proposition \ref{analytic prom} the map $t\mapsto (1+|t|^q)^{\frac{1}{q}}$ is analytic on $(c^\prime,d^\prime),$ by identity theorem we may assume that $(c^\prime,d^\prime)=(-\epsilon,0),~\text{or}~(0,\epsilon).$ Since $\lambda_l$ is analytic on $(-\epsilon,\epsilon)$ this means the map $t\mapsto (1+|t|^q)^{\frac{1}{q}}$ can be analytically extended on a neighborhood of zero. Let us denote $[q]$ to be the greatest integer less than or equal to $q$ and $\{q\}=q-[q].$ Since $q\notin\mathbb N$ we have that $0<\{q\}<1.$ We have from Lemma \ref{derivative of f} that if $t>0,$ then $f_q^{[q]+1}(t)=t^{\{q\}-1}(1+t^q)^{\frac{1}{q}-[q]-1}g_{[q]+1}(t^q).$ Therefore, $\lim\limits_{t\to 0^{+}}f_q^{[q]+1}(t)$ does not exist as $g_{[q]+1}$ is continuous at $0$ and $g_{[q]+1}(0)\neq 0.$ Similarly, $\lim\limits_{t\to 0^{-}}f_q^{[q]+1}(t)$ does not exist. This contradicts the fact that $f_q$ can be extended analytically on a neighborhood of $0$.

	\noindent{\bf{Sub-case-$\bf{2}$}}: Suppose $i_0<n.$  Suppose $(a,b)\subseteq\mathcal A_{k_1,\dots,k_{i_0}}.$ Moreover, we choose $(a,b)$ so that either $(a,b)\subseteq(-\epsilon,0)$ or $(0,\epsilon).$ Let $\{r_1,\dots,r_{j_0}\}=\{1,\dots,n\}\setminus\{k_1,\dots,k_{i_0}\}.$ Let us consider $\mathcal B_{r_1,k_l}:=\{t\in (a,b):|\lambda_{r_1}(t)|=|\lambda_{k_l}(t)|\}$ where $1\leq l\leq i_0.$ For a fixed but arbitrary $l,$ we observe that $\mathcal B_{r_1,k_l}=\{t\in(a,b):\lambda_{r_1}(t)=\lambda_{k_l}(t)\}\cup \{t\in(a,b):\lambda_{r_1}(t)=-\lambda_{k_l}(t)\}$. Note that the set $\{t\in(a,b):\lambda_{r_1}(t)=\lambda_{k_l}(t)\}$ is a zero set of an analytic function. Therefore, by identity theorem, it is either empty or discrete or $(a,b).$ However, if $\{t\in(a,b):\lambda_{r_1}(t)=\lambda_{k_l}(t)\}=(a,b),$ then again by identity theorem $\{t\in(-\epsilon,\epsilon):\lambda_{r_1}(t)=\lambda_{k_l}(t)\}=(-\epsilon,\epsilon),$ which is not possible. Therefore, we conclude that the set $\{t\in(a,b):\lambda_{r_1}(t)=\lambda_{k_l}(t)\}$ is either discrete or empty. The same is true for the set $\{t\in(a,b):\lambda_{r_1}(t)=-\lambda_{k_l}(t)\}$. Therefore, each of the set $\mathcal{B}_{r_1,k_l}$ is discrete or empty. Now consider the set $(a,b)\setminus\cup_{l=1}^{i_0}\mathcal B_{r_1,k_l}.$ Clearly, $(a,b)\setminus\cup_{l=1}^{i_0}\mathcal B_{r_1,k_l}$ is non-empty as each of the sets $\mathcal B_{r_1,k_l}$ is discrete or empty for all $1\leq l\leq i_0.$ Therefore, there exists $t_0\in(a,b)$ such that $|\lambda_{r_1}(t_0)|\neq |\lambda_{k_l}(t_0)|$ for all $1\leq l\leq i_0.$ This contradicts with the choice of $i_0.$

	This completes the proof of the theorem.
\end{proof}
\begin{rmrk}
Note that Case-1 of Theorem \ref{isometric} was already solved in \cite[Theorem 1.2]{Ra20}. However, our proof is much simpler and avoids use of Birkhoff-James orthogonality. 
\end{rmrk}
Observe that the methods for Theorem \ref{isometric} simply breaks down when we study isometric embeddability of $S_q^m$ into $S_1^n,$ $1<q<\infty$ and $2\leq m\leq n$ as the description of the $S_1^n$-norm prevents us to use the full strength of Kato-Rellich theorem. On the other hand, the methods of Theorem \ref{th2} do not work as the crucial Lemma \ref{l2} is not valid for $p=1$. To overcome these difficulties we need the following theorem which gives important geometric properties of $S_1^n.$

\begin{thm}\cite[Theorem 4.3]{DaGaJa84}\label{th}
	Suppose that $\mathcal{A}$ is a $C^*$-algebra. Then $\mathcal{A}^*$ is $2$-uniformly PL-convex with constant $c\geq \frac{1}{2}$, that is 
	\begin{align*}
	\|x\|^2+c\|y\|^2\leq \frac{1}{2\pi}\int_{0}^{2\pi}\|x+e^{it}y\|^2 \quad\forall x,y \in\mathcal{A^*}.
	\end{align*}
\end{thm}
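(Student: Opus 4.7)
The plan is to establish this deep geometric property of $C^*$-algebra duals through a sequence of reductions, mirroring the classical approach to PL-convexity of non-commutative $L_1$-spaces.

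First, I would reduce to the case of the predual of a von Neumann algebra. Since $\mathcal{A}^{**}$ carries a canonical von Neumann algebra structure and $\mathcal{A}^*$ embeds isometrically into the predual $(\mathcal{A}^{**})_{*}$, it suffices to establish the inequality in the non-commutative $L_1$-space $L_1(\mathcal{M})$ for an arbitrary von Neumann algebra $\mathcal{M}$. Both sides of the desired inequality are norm-continuous in $x$ and $y$, so a weak-$*$ density argument (Kaplansky density) further permits a reduction to the case where $\mathcal{M}$ is a finite-dimensional matrix algebra $M_n(\mathbb{C})$ equipped with the trace-class norm $\|\cdot\|_1$. A $2\times 2$ trick analogous to Lemma \ref{reduct} then allows us to assume $x, y \in M_n^{sa}$, so that the problem becomes an explicit inequality on self-adjoint trace-class matrices.

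The core analytic step is to establish, for self-adjoint $x, y \in M_n^{sa}$, the estimate
\begin{equation*}
\frac{1}{2\pi}\int_0^{2\pi}\|x + e^{it}y\|_1^2 \, dt \;\geq\; \|x\|_1^2 + \tfrac{1}{2}\|y\|_1^2.
\end{equation*}
A natural approach is to use the pointwise identity $\frac{1}{2\pi}\int_0^{2\pi}|\lambda + e^{it}\mu|^2\, dt = |\lambda|^2 + |\mu|^2$ for complex scalars in tandem with the Jordan decomposition of self-adjoint trace-class operators, integrating against appropriate spectral measures. Equivalently, one may reformulate $2$-uniform PL-convexity as subharmonicity of the map $\zeta \mapsto \|x+\zeta y\|_1^2 - c|\zeta|^2\|y\|_1^2$ on $\mathbb{C}$, then use Hanner-type inequalities for $L_1$ to verify the subharmonicity directly.

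The main obstacle is non-commutativity: in the commutative case one may simultaneously diagonalize $x$ and $y$ and integrate entry-wise, but in general $[x,y]\neq 0$, so $|x + e^{it}y|$ cannot be computed by a direct spectral calculus. The standard remedy is to invoke a non-commutative Khintchine-type inequality (such as that of Lust-Piquard--Pisier) or, equivalently, Haagerup's non-commutative Grothendieck-type inequality, in order to compare the circle average $\frac{1}{2\pi}\int_{0}^{2\pi} \|x+e^{it}y\|_1^2\, dt$ with Rademacher-type averages of $x$ and $y$ that can be estimated by the $S_1$-norms of $x$ and $y$ separately. Sharpness of the constant $c = 1/2$ is already witnessed in the commutative case by taking $x = y$, so any proof must remain tight through every reduction, which places a stringent demand on the version of the Khintchine inequality used in the noncommutative step.
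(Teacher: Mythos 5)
First, a point of comparison: the paper does not prove this statement at all. It is imported verbatim as \cite[Theorem 4.3]{DaGaJa84}, where the result (due to Haagerup) is established by a direct argument in $\mathcal{A}^*$ using the polar decomposition of functionals in the enveloping von Neumann algebra, with no finite-dimensional reduction and no Khintchine-type input. So there is no in-paper proof to measure your proposal against; it has to stand on its own, and as written it does not.

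There are two genuine gaps. The first is the reduction to $M_n(\mathbb{C})$ with the trace norm: identifying $\mathcal{A}^*$ with $(\mathcal{A}^{**})_*$ is fine, and one may cut down to a subalgebra that nearly norms $x$ and $y$ (enlarging the algebra only increases the right-hand side), but Kaplansky density does not let you replace the predual of an arbitrary von Neumann algebra by finite-dimensional $S_1^n$'s; only hyperfinite algebras admit such matrix approximation, and a general $C^*$-algebra (e.g.\ a reduced free group $C^*$-algebra) does not, so this step forfeits exactly the generality the theorem asserts. The second and more serious gap is that even in the case $\mathcal{A}^*=S_1^n$ --- the only case the paper actually uses, in Theorem \ref{thm 3.4} --- the core inequality is never established. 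The scalar identity $\frac{1}{2\pi}\int_0^{2\pi}|\lambda+e^{it}\mu|^2\,dt=|\lambda|^2+|\mu|^2$ combined with Jordan decomposition fails precisely because $x$ and $y$ need not commute, as you acknowledge, and the proposed remedy of invoking a noncommutative Khintchine or Grothendieck inequality is a pointer to a tool that does not apply: the target lower bound is $\|x\|_1^2+\frac{1}{2}\|y\|_1^2$, not a row/column square function, and the known lower Khintchine constants in $L_1$ would not recover $c=\frac{1}{2}$. Finally, your sharpness check is incorrect: with $x=y$ one gets $\frac{1}{2\pi}\int_0^{2\pi}|1+e^{it}|^2\,dt=2$, which only forces $c\le 1$; the bound $c\le\frac{1}{2}$ is witnessed instead by small mean-zero perturbations $y$ of a fixed positive $x$. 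In short, the proposal arranges reductions around the theorem but leaves the actual inequality unproved; the correct route is the Haagerup argument reproduced in \cite{DaGaJa84}.
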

\begin{thm}\label{thm 3.4}Let $1<q<\infty$. If there exists a linear isometry of $S_q^m$  into $S_1^n,~2\leq m\leq n,$ then we must have $q\in\{2,3\}.$
\end{thm}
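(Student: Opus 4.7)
The plan is to invoke the $2$-uniform $PL$-convexity of $S_1^n$ (Theorem \ref{th}) combined with the rigidity imposed by an isometric embedding. From an isometry $T:S_q^m\hookrightarrow S_1^n$, Lemma \ref{reduct} (together with Definition \ref{hp}) produces a self-adjoint pair $(A,B)$ with $\|A\|_1=\|B\|_1=1$ and $\|A+tB\|_1=(1+|t|^q)^{1/q}$ for all real $t$. Since the original embedding is $\mathbb{C}$-linear, the identity upgrades to $\|A+zB\|_1=(1+|z|^q)^{1/q}$ for every $z\in\mathbb{C}$, and in particular $\|A+e^{i\theta}B\|_1=2^{1/q}$ for every $\theta\in[0,2\pi]$.

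Plugging this into Theorem \ref{th} with constant $c\geq\tfrac{1}{2}$ yields
\[1+\tfrac{1}{2}\leq \|A\|_1^2+c\|B\|_1^2\leq \frac{1}{2\pi}\int_0^{2\pi}\|A+e^{i\theta}B\|_1^{\,2}\,d\theta = 2^{2/q},\]
so $q\leq 2\log 2/\log(3/2)<4$, which at once rules out $q\in[4,\infty)$.

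To refine this bound to $q\in\{2,3\}$, I would iterate $PL$-convexity over the full $m$-dimensional diagonal image $\{T(E_{ii})\}_{i=1}^m$ of $\ell_q^m\subseteq S_q^m$, obtaining
\[1+\tfrac{m-1}{2}\leq m^{2/q},\]
which tightens as $m$ grows but for $m=2$ recovers only the previous bound. To exclude the remaining non-integer values $q\in(1,2)\cup(2,3)\cup(3,2\log 2/\log(3/2)]$ in the worst case $m=2$, one must further apply $PL$-convexity to carefully chosen off-diagonal image pairs, for instance $(T(E_{11}),T(E_{12}))$, whose preimages have a rigid rank-one singular-value structure in $S_q^m$, and combine the resulting inequalities with the $\theta$-independence of $\|A+e^{i\theta}B\|_1^{\,2}$ through a delicate case-by-case analysis.

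\textbf{Main obstacle.} The heart of the difficulty is bridging the plain $PL$-convexity bound $q\leq 2\log 2/\log(3/2)\approx 3.42$ to the exact two-point conclusion $q\in\{2,3\}$. Since the diagonal iteration gives nothing new for the smallest admissible dimension $m=2$, one must draw extra mileage out of the full non-commutative matrix structure of $S_q^m$ by applying $PL$-convexity to off-diagonal image pairs, combined with a careful analysis of when the extremal case of $PL$-convexity can occur.
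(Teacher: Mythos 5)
Your first step is exactly the paper's Case 1: applying Theorem \ref{th} with $\|A\|_1=\|B\|_1=1$ and $\|A+e^{i\theta}B\|_1=2^{1/q}$ gives $1+c\le 2^{2/q}$ with $c\ge\tfrac12$, which rules out $q\ge 4$. That part is fine. The gap is everything after it. Your proposed refinements cannot close the remaining range $(1,2)\cup(2,3)\cup(3,\,2\log 2/\log(3/2)]$: the $m$-fold diagonal iteration, as you note, gives nothing new in the mandatory case $m=2$, and more fundamentally, $PL$-convexity (applied to diagonal or off-diagonal image pairs alike) is a convexity inequality and can only produce interval-type bounds on $q$. It has no mechanism for distinguishing integer from non-integer values of $q$ inside an interval --- it could never rule out $q=2.5$ while permitting $q=2$ and $q=3$ --- so no amount of case analysis along these lines will yield the two-point conclusion $q\in\{2,3\}$.

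The missing idea is an \emph{integrality} argument, which the paper gets from the Kato--Rellich theorem (Theorem \ref{kato}). For $1<q<4$, choose real-analytic eigenvalue branches $\lambda_1,\dots,\lambda_n$ of $A+tB$ near $t=0$, so that $(1+|t|^q)^{1/q}=\sum_{i=1}^n|\lambda_i(t)|$. Discarding identically vanishing branches and passing to an interval $(a,b)\subseteq(0,\epsilon)$ (or $(-\epsilon,0)$) on which no $\lambda_i$ vanishes, one finds signs $\epsilon_i$ with $(1+|t|^q)^{1/q}=\sum_i\epsilon_i\lambda_i(t)$ on $(a,b)$. The right-hand side is real-analytic across $t=0$, and by the identity theorem (using Proposition \ref{analytic prom}) the function $t\mapsto(1+|t|^q)^{1/q}$ must then extend real-analytically to a neighborhood of $0$; Lemma \ref{derivative of f} shows the $([q]+1)$-st derivative blows up at $0$ whenever $q\notin\mathbb N$, so $q$ must be an integer, hence $q\in\{2,3\}$. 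Without this analyticity step (or some substitute detecting the non-smoothness of $|t|^q$ at the origin for non-integer $q$), your proof does not establish the theorem.
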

\begin{proof}
	By Lemma \ref{reduct}, we may assume that there exist $A,B\in S_\infty^n$ such that $(A,B)$ has $(\mathbf{I}_{q,1}).$
	
	\noindent{\bf{Case-$\bf{1}$}}: Suppose $q\geq 4$. Then from Theorem \ref{th} along with $(\mathbf{I}_{q,1})$, it follows that $(1+c)^q\leq 4 $, which is impossible.
	
	\noindent{\bf{Case-$\bf{2}$}}: Let $1<q<4$. Then, by Theorem \ref{kato}, there exists $\epsilon>0$ such that we have the identity
	\[(1+|t|^q)^{\frac{1}{q}}=\sum_{i=1}^n|\lambda_i(t)|,\ \text{for all}\ \in(-\epsilon,\epsilon).\] We will show that the above identity forces $q$ to be an integer. Without loss of any generality, let us also assume that for all $1\leq i\leq n,$ $\lambda_i$ does not vanish everywhere. Note that for all $1\leq i\leq n,$ the set $\{t\in(-\epsilon,\epsilon):\lambda_i(t)=0\}$ is either empty or discrete. Therefore, $(-\epsilon,\epsilon)\setminus\cup_{i=1}^n\{t\in(-\epsilon,\epsilon):\lambda_i(t)=0\}$ is a non-empty open set. Therefore, there exists an interval $(a,b)$ which is contained in either $(-\infty,0)$ or $(0,\infty),$ such that we have $\lambda_i(t)\neq 0$ for all $1\leq i\leq n$ and $t\in(a,b).$ This implies that there exists a choice of $\epsilon_i\in\{+1,-1\}$ such that $\sum_{i=1}^n\epsilon_i\lambda_i(t)=\sum_{i=1}^n|\lambda_i(t)|$ for all $t\in (a,b).$ The required conclusion follows as arguing in the \textbf{Case-1} of the proof of Theorem \ref{isometric}. This completes the proof of the theorem.
\end{proof}
\section{Isometric embeddability for finite dimensional case:}\label{isofinite}
We prove the following lemma which might be of independent interest.
\begin{lma}\label{l2}
	Let $A$ and $B$ be two operators on $\ell_2^n$ with $(\mathbf{I}_{q,p})$.
	\begin{enumerate}[(i)]
		\item 
		Then for $p\geq 2$
		\begin{align*}
			\frac{d^2}{dt^2}\bigg|_{t=0}\norm{A+tB}_p^p> 0.
		\end{align*}
		\item If the map $\mathbb{R}\ni t\mapsto A+tB\in \mathcal{B}(\ell_2^n	)$ is invertible in a neighborhood of $0$, then for $p>1$
		\begin{align*}
			\frac{d^2}{dt^2}\bigg|_{t=0}\norm{A+tB}_p^p> 0.
		\end{align*}
	\end{enumerate}
\end{lma}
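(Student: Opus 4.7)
The plan is to apply Theorem \ref{th1} to $f(x)=|x|^p$, make the trace formula completely explicit in terms of the matrix entries of $A$ and $B$, and then exploit convexity of $|x|^p$ together with Lemma \ref{l}. Working in the standard basis, where $A=\mathrm{diag}(a_1,\dots,a_n)$ and $b_{ji}=\overline{b_{ij}}$, the identity $P_{e_i}BP_{e_j}BP_{e_k}=b_{ij}b_{jk}\,|e_i\rangle\langle e_k|$ reduces the trace of the multiple operator integral on the right-hand side of \eqref{eq3} to
\begin{align*}
\frac{1}{2}\at{\frac{d^2}{dt^2}}{t=0}\norm{A+tB}_p^p \;=\; \sum_{i,j=1}^{n} f^{[2]}(a_i,a_j,a_i)\,|b_{ij}|^2.
\end{align*}
Positivity of this sum is what we have to prove.

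For case (i) the hypothesis $p\geq 2$ gives $f\in C^2(\mathbb R)$, so Theorem \ref{th1} applies directly. For case (ii), when $1<p<2$ the function $|x|^p$ fails to be $C^2$ at $0$; here the invertibility of $A+tB$ on a neighbourhood of $t=0$ forces $\sigma(A+tB)\subseteq\{|x|\geq \epsilon\}$ for some $\epsilon>0$, and after replacing $f$ by a smooth extension $\tilde f$ that coincides with $|x|^p$ on $\{|x|\geq \epsilon\}$, Theorem \ref{th1} applies to $\tilde f$ and its second divided differences $\tilde f^{[2]}(a_i,a_j,a_i)$ equal $f^{[2]}(a_i,a_j,a_i)$ at the relevant spectral points. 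Non-negativity of each coefficient then follows from strict convexity of $|x|^p$ for $p>1$: if $a_i\neq a_j$ the recursive definition simplifies to the first-order Taylor remainder $\frac{f(a_j)-f(a_i)-f'(a_i)(a_j-a_i)}{(a_j-a_i)^2}$, which is strictly positive by strict convexity; and if $a_i=a_j$ it equals $\tfrac12 f''(a_i)=\tfrac{p(p-1)}{2}|a_i|^{p-2}$, which is strictly positive except in the single degenerate case $p>2$ with $a_i=0$.

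The main obstacle is producing at least one strictly positive term. In case (ii) every $a_i\neq 0$, so every coefficient is strictly positive and the sum is positive because $B\neq 0$. In case (i) with $p=2$, every coefficient equals $1$ and the sum reduces to $\norm{B}_2^2=1$. In the remaining and hardest sub-case $p>2$ of (i), the coefficient $f^{[2]}(a_i,a_j,a_i)$ vanishes precisely when $a_i=a_j=0$, so one must exhibit an index pair $(i,j)$ with $b_{ij}\neq 0$ and $(a_i,a_j)\neq (0,0)$. This is exactly where Lemma \ref{l} enters: $AB\neq 0$ yields $(AB)_{ij}=a_i b_{ij}\neq 0$ for some $(i,j)$, forcing both $a_i\neq 0$ and $b_{ij}\neq 0$, and hence giving the required strictly positive contribution and completing the proof.
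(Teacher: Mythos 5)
Your proof is correct and follows essentially the same route as the paper: the same reduction via Theorem \ref{th1} to the sum $\sum_{i,j}f^{[2]}(a_i,a_j,a_i)\,|b_{ij}|^2$, the same smooth modification of $|x|^p$ away from the spectrum in case (ii), and the same use of Lemma \ref{l} to extract an index pair with $a_i\neq 0$ and $b_{ij}\neq 0$. The only cosmetic difference is that you verify positivity of the off-diagonal coefficients by identifying $f^{[2]}(a_i,a_j,a_i)$ with a first-order Taylor remainder and invoking strict convexity of $|x|^p$, whereas the paper computes $f_p^{[2]}(d_l,d_k,d_k)+f_p^{[2]}(d_k,d_l,d_l)$ explicitly and checks positivity of the resulting quotient by hand.
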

\begin{proof}
	$(i)$~
	Let $f_p$ be the \textquotedblleft{smooth}\textquotedblright  -function $|\cdot|^p$, i. e., $f_p$ is a smooth compactly supported function on $\mathbb{R}\setminus \{0\}$ such that 
	\begin{align}\label{a}
		f_p(x) &=|x|^p \quad \text{for}\quad x\in [-2,2].
	\end{align}
	Then, for $p\geq2$, it is easy to observe that $f_p\in C^2(\mathbb{R})$ and clearly
	\[\norm{A+tB}_p^p=\operatorname{Tr}(|A+tB|^p)=\operatorname{Tr}(f_p(A+tB)),~ t\in[-1,1].\] 
	Therefore, by using Theorem~\ref{th1} we conclude that  
	\begin{align}\label{eq4}
		\frac{1}{2!}\frac{d^2}{dt^2}\bigg|_{t=0}\operatorname{Tr}(f_p(A+tB))=\operatorname{Tr}\big(T^{A,A,A}_{f_p^{[2]}}(B,B)\big).
	\end{align}
	Let $\{d_i\}_{i=1}^{n}$ be the set of eigenvalues of $A$ and let $\{e_i\}_{i=1}^{n}$ be the corresponding set of eigenvectors of $A$ which forms an orthonormal basis for $\ell_2^n$. Let $P_i$ be the orthogonal projection of $\ell_2^n$ onto the space spanned by the vector $\{e_i\}$. Then the matrix representation of the operators $A$, $B$ and $P_i$  with respect to the orthonormal basis $\{e_i\}_{i=1}^{n}$ are as follows: \\  
	$A=\begin{bmatrix}
		d_1&0&\cdots&0\\
		0&d_2&\cdots&0\\
		\vdots&\vdots&\ddots&0\\
		0&0&\cdots&d_n
	\end{bmatrix}_{n\times n},$ 
	$B=\begin{bmatrix}
		b_{11}&b_{12}&\cdots&b_{1n}\\
		\bar{b}_{12}&b_{22}&\cdots&b_{2n}\\
		\vdots&\vdots&\ddots&0\\
		\bar{b}_{1n}&\bar{b}_{2n}&\cdots&{b}_{nn}
	\end{bmatrix}_{n\times n}$ and 
	$P_i=\begin{bsmallmatrix}
		0&\cdots&0&\cdots&0\\
		\vdots&\ddots&\vdots&&\vdots\\
		0&\cdots&\boxed{1_{ii}}&\cdots&0\\
		\vdots&&\vdots&\ddots&\vdots\\
		0&\cdots&0&\cdots&0
	\end{bsmallmatrix}_{n\times n}.$\\ 
	Now by using  (\ref{eq1}) and the cyclicity property of trace we get
	\begin{align}\label{eq5}
		\nonumber\Tr\big(T^{A,A,A}_{f_p^{[2]}}(B,B)\big)=&\sum_{i_0,i_1,i_2=1}^{n}f_p^{[2]}(d_{i_0},d_{i_1},d_{i_2})\Tr(P_{i_0}BP_{i_1}BP_{i_2})\\
		\nonumber=&\sum_{i_0,i_1,i_2=1}^{n}f_p^{[2]}(d_{i_0},d_{i_1},d_{i_2})\Tr(BP_{i_1}BP_{i_2}P_{i_0})\\
		\nonumber=&\sum_{i_0,i_1=1}^{n}f_p^{[2]}(d_{i_0},d_{i_1},d_{i_0})\Tr(BP_{i_1}BP_{i_0})\\
		\nonumber=&\sum_{i_0,i_1=1}^{n}f_p^{[2]}(d_{i_0},d_{i_1},d_{i_0})|b_{i_0i_1}|^2\\
		=&\sum_{l=1}^{n}|b_{ll}|^2f_p^{[2]}(d_l,d_l,d_l)+\sum_{l=1}^{n-1}\sum_{k=l+1}^{n}~|b_{lk}|^2~\big(f_p^{[2]}(d_l,d_k,d_k)+f_p^{[2]}(d_k,d_l,d_l)\big).
	\end{align}
	In above the last equality follows from the previous one, by collecting indices which are equal at one place and the rest in the other. On the other hand, from the definition of divided difference, it follows that if $d_l\neq 0$ then $f_p^{[2]}(d_l,d_l,d_l)>0 $ and if $d_l=d_k\neq 0$ then $f_p^{[2]}(d_l,d_k,d_k)+f_p^{[2]}(d_k,d_l,d_l)>0$. Moreover, for $d_l\neq d_k$, we have
	\begin{align}\label{e}
		\nonumber f_p^{[2]}(d_l,d_k,d_k)+f_p^{[2]}(d_k,d_l,d_l)=&~\frac{\partial}{\partial\lambda}\bigg|_{\lambda=d_k}\Bigg(\dfrac{|d_l|^p-|\lambda|^p}{d_l-\lambda}\Bigg)+\frac{\partial}{\partial\lambda}\bigg|_{\lambda=d_l}\Bigg(\dfrac{|d_k|^p-|\lambda|^p}{d_k-\lambda}\Bigg)\\
		=&~p\times\dfrac{|d_l|^p+|d_k|^p-d_ld_k(|d_l|^{p-2}+|d_k|^{p-2})}{(d_l-d_k)^2}.
	\end{align} It is sufficient to show that right hand side of (\ref{e}) is strictly greater than $0$ when $d_l,d_k>0$. To show that we rewrite (\ref{e}) as follows 
	\begin{align}\label{eq6}
		f_p^{[2]}(d_l,d_k,d_k)+f_p^{[2]}(d_k,d_l,d_l)=&~p\cdot d_l^{p-2}\cdot\dfrac{1-\left(\frac{d_k}{d_l}\right)^{p-1}}{1-\left(\frac{d_k}{d_l}\right)},
	\end{align} which is strictly positive for $d_l\neq d_k.$
	Now from Lemma~\ref{l} we conclude that $AB\neq 0$ and hence there exist some $l,k\in\{1,2,\ldots,n\}$ such that $d_l\neq 0$ and $b_{lk}\neq 0$. 
	If not, then we can find an orthonormal basis with respect to which the matrix representations of $A$ and $B$ are as follows: 
	\begin{align*}
		A=\begin{bmatrix}
			d_1&0&\cdots&0&\cdots&0\\
			0&d_2&\cdots&0&\cdots&0\\
			\vdots&\vdots&\ddots&\vdots&&\vdots\\
			0&0&\cdots&d_k&\cdots&0\\
			\vdots&\vdots&&\vdots&0&\vdots\\
			0&0&\cdots&0&\cdots&0
		\end{bmatrix}_{n\times n}~\text{and }
		B=\begin{bmatrix}
			0&0&\cdots&0&\cdots&0\\
			0&0&\cdots&0&\cdots&0\\
			\vdots&\vdots&\ddots&\vdots&&\vdots\\
			0&0&\cdots&0&\cdots&0\\
			\vdots&\vdots&&\vdots&\neq 0&\vdots\\
			0&0&\cdots&0&\cdots&b_{nn}
		\end{bmatrix}_{n\times n},
	\end{align*} 
	and hence $AB=BA=0$ which is a contradiction. Thus from the above observation along with equations \eqref{eq5} and \eqref{eq6} we conclude that
	\begin{align*}
		\frac{d^2}{dt^2}\bigg|_{t=0}\norm{A+tB}_p^p> 0.
	\end{align*}
	$(ii)$~
	Note that there exists $a>0$ such that $\sigma(A+tB)\cap[-a,a]$ is empty for $t$ in a small neighborhood of $0$. Now we define the compactly supported smooth function $f_p:\R\rightarrow \mathbb{C}$ such that $f_p(x)=|x|^p$ on $[-2,-a]\cup[a,2]$. Thus by applying Theorem~\ref{th1} corresponding to the function $f_p$ we get 
	\[\frac{d^2}{dt^2}\bigg|_{t=0}\norm{A+tB}_p^p=\sum_{l=1}^{n}|b_{ll}|^2f_p^{[2]}(d_l,d_l,d_l)+\sum_{l=1}^{n-1}\sum_{k=l+1}^{n}~|b_{lk}|^2~\big(f_p^{[2]}(d_l,d_k,d_k)+f_p^{[2]}(d_k,d_l,d_l)\big).
	\] 
	Note that in this case we have $d_l \neq 0$ for $1\leq l\leq n$. Therefore by similar argument as in $(i)$ we conclude that 
	\[\frac{d^2}{dt^2}\bigg|_{t=0}\norm{A+tB}_p^p>0.\]
	This completes the proof. 
\end{proof}
Now we are in a position to state and prove our main theorem in this section.
\begin{thm}\label{th2}
	Let $(q,p)\in\left(1,\infty\right]\times\left(1,\infty\right) $  and $2\leq m\leq n<\infty$. If there exists an isometric embedding $S_q^m$ into $S_p^n$ with $p\neq q$,
	then $q$ must be $2$.
\end{thm}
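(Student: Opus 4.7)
The plan is to follow the strategy outlined in the introduction: invoke Lemma \ref{reduct} to produce a pair of self-adjoint operators $(A,B)$ on a finite-dimensional Hilbert space satisfying $(\mathbf{I}_{q,p})$, so that
\begin{equation*}
\|A+tB\|_p^p=(1+|t|^q)^{p/q}\quad\text{for all }t\in\R,
\end{equation*}
the right-hand side being read as $\max(1,|t|)^p$ when $q=\infty$. I would then compare the second derivative at $t=0$ on both sides to derive a contradiction whenever $q\neq 2$. A direct computation on the scalar side shows that $\at{\frac{d^2}{dt^2}}{t=0}(1+|t|^q)^{p/q}$ equals $0$ if $q>2$ (including $q=\infty$, where the function is identically $1$ on $(-1,1)$), equals $p>0$ if $q=2$, and fails to exist finitely if $1<q<2$ (the putative second derivative behaves like $p(q-1)|t|^{q-2}$ near the origin, diverging to $+\infty$).

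For $p\geq 2$, the function $f_p(x)=|x|^p$ belongs to $C^2(\R)$, so Theorem \ref{th1} gives that $t\mapsto\|A+tB\|_p^p=\Tr(f_p(A+tB))$ is genuinely $C^2$ near $0$, and Lemma \ref{l2}(i) asserts that its second derivative at $0$ is strictly positive. This is incompatible with the scalar computation for any $q>2$ (where the scalar side vanishes) and for any $1<q<2$ (where the scalar side has no finite second derivative at $0$). Hence $q=2$ in this regime.

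For $1<p<2$, the function $|x|^p$ is only $C^1$ at the origin, so Lemma \ref{l2}(i) no longer applies and a sub-case split is needed. If $A+tB$ happens to be invertible on some neighborhood of $0$, the spectrum of $A+tB$ stays bounded away from $0$ there, and I would replace $|x|^p$ by a compactly supported smooth extension agreeing with $|x|^p$ on a neighborhood of the spectrum; Theorem \ref{th1} together with Lemma \ref{l2}(ii) then yields a finite, strictly positive second derivative at $0$, and the contradiction from the $p\geq 2$ argument carries over verbatim. If $A+tB$ is singular on every neighborhood of $0$, Kato-Rellich (Theorem \ref{kato}) supplies real-analytic eigenvalue branches $\lambda_i(t)$; because a non-trivial real-analytic function has only isolated zeros, at least one branch $\lambda_{i_0}$ must vanish identically. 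I would then peel off the smallest $\{A,B\}$-invariant subspace containing the associated real-analytic eigenvector (generated by its Taylor coefficients at $0$), whose orthogonal complement is invariant thanks to self-adjointness of $A$ and $B$; the restricted pair $(\tilde A,\tilde B)$ still satisfies $(\mathbf{I}_{q,p})$ on the strictly smaller space because the stripped piece carries no contribution to $\|A+tB\|_p^p$. Iterating, the dimension strictly decreases while Lemma \ref{l} prevents degeneration to $\tilde A\tilde B=0$, so finitely many steps return us to the invertible sub-case.

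The hard part will be this last sub-case: constructing a genuine orthogonal $\{A,B\}$-invariant splitting that preserves $(\mathbf{I}_{q,p})$ on the complement, and verifying termination of the iteration. This is the point at which the non-commutative setting is strictly harder than its commutative analog in \cite{LySh04}, and it relies on combining the real-analytic eigenvector parametrization from Kato-Rellich with the non-vanishing information supplied by Lemma \ref{l}.
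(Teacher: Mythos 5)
Your reduction via Lemma \ref{reduct} and your treatment of $p\geq 2$ (where the modified $f_p\in C^2(\R)$ makes $t\mapsto\|A+tB\|_p^p$ twice differentiable by Theorem \ref{th1}, Lemma \ref{l2}(i) gives a strictly positive second derivative at $0$, and the scalar side $(1+|t|^q)^{p/q}$ either has vanishing second derivative for $q>2$, including $q=\infty$, or fails to be twice differentiable at $0$ for $1<q<2$) are sound and essentially match the paper's Case I and Subcase I, phrased as a direct second-derivative comparison rather than through the Kato--Rellich power-series asymptotics. The invertible sub-case for $1<p<2$ is likewise fine and is the paper's Case I combined with Lemma \ref{l2}(ii).

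The gap is in the remaining sub-case, $1<p<2$ with $A$ singular, and your proposed route does not close it. First, the dichotomy is mis-stated: if $A$ is singular but $A+tB$ is invertible for all small $t\neq 0$ (the generic situation), then $\det(A+tB)$ has an isolated zero at $t=0$ and \emph{no} eigenvalue branch vanishes identically, so the ``peel off an identically zero branch'' step never gets started. Second, even when some branch does vanish identically, discarding it (which requires no invariant-subspace surgery at all, since it contributes nothing to $\sum_j|\lambda_j(t)|^p$) does not return you to the invertible sub-case: the surviving branches can still satisfy $\lambda_i(0)=0$ with $\lambda_i(t)=t^{m_i}\mu_i(t)$, $\mu_i(0)\neq 0$, and for $1<p<2$ the terms $|t|^{m_ip}|\mu_i(t)|^p$ are exactly where the difficulty lives. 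The paper's proof (Case II, Subcase II) handles this by first ruling out $m_i=1$ (a term $t^{p-2}|\mu_i|^p(0)$ would have to be cancelled by $\frac{p}{q}t^{q-2}$, impossible since $p\neq q$), so that $m_i\geq 2$ and the norm is twice differentiable at $0$ with second derivative $2\psi_1(0)$; it then proves $\psi_1(0)\neq 0$ by regularizing to $A_x=A+xI$, applying the multiple-operator-integral formula \eqref{a4}, and showing that finiteness of the limit as $x\to 0^+$ forces the block of $B$ on the kernel of $A$ to vanish, after which the surviving terms are strictly positive. None of this is recoverable from your iteration, so the argument as proposed is incomplete precisely on the case you flag as ``the hard part.'' (Note also that the paper disposes of $q=\infty$ for all $1<p<\infty$ by a separate, simpler observation: $\max\{1,|t|^p\}$ is not differentiable at $t=1$ while $\|A+tB\|_p^p$ is.)
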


\begin{proof}
	Note that by Lemma \ref{reduct}, it is enough to prove that if $A\in S_p^n$ and $B\in S_p^n$ are such that $(A,B)$ has $(\mathbf{I}_{q,p})$, then $q=2.$
	By Theorem \ref{kato}, there exist real-analytic functions $\lambda_1,\lambda_2,\ldots,\lambda_n$ on a neighborhood of $0$ such that $\{\lambda_1(t),\dots,\lambda_n(t)\}$ is the complete set of eigenvalues of $A+tB$. Since $(A,B)$ has $(\mathbf{I}_{q,p})$, we have
	\begin{align}\label{eq8}
		\|(1,t)\|_q^p=\sum_{j=1}^{n}|\lambda_j(t)|^p.
	\end{align} 
	\noindent \textbf{Case-I:} Let $(q,p)\in (1,\infty)\times(1,\infty)$, and assume $\lambda_i(0)\neq 0$ for all $i$. In other words $A+tB$ is invertible in a neighborhood of $0$. Then the right hand side of \eqref{eq8} is a real-analytic function on a neighborhood of the origin and hence $t\mapsto \norm{A+tB}_p^p$ is real-analytic. Therefore the map $t\mapsto \norm{A+tB}_p^p$ has a power-series representation in a neighborhood of $0$ and moreover by Lemma~\ref{l2} we have $\frac{d^2}{dt^2}\bigg|_{t=0}\norm{A+tB}_p^p\neq 0$. Let 
	\[\norm{A+tB}_p^p=\sum_{n=0}^{\infty}a_nt^n,\quad \text{where}\quad a_0=1 \quad \text{and}\quad a_2\neq 0.
	\] 
	On the other hand since $A\perp_{BJ} B$, then from Theorem~\ref{l3} we conclude that $a_1=0$. Therefore from \eqref{eq8} we get
	\begin{align*}
		\nonumber&(1+|t|^q)^{p/q}=\norm{A+tB}_p^p\\
		\nonumber \implies ~&1+\frac{p}{q}|t|^q+\mathcal{O}(|t|^{2q})=\sum_{n=0}^{\infty}a_nt^n.
	\end{align*}
	Taking $t>0,$ the above equation becomes to
	\begin{align}\label{eq9}
		&\frac{p}{q}t^{q-2}+\mathcal{O}(t^{2q-2})=a_2+\sum_{n=3}^{\infty}a_nt^{n-2}.
	\end{align}
	Since the above identity (\ref{eq9}) holds identically in a neighborhood of 0 and  $a_2\neq0$, then by considering the limit as $t\rightarrow 0^+$ on both sides of \eqref{eq9} we conclude that $q=2$.
	
	\vspace{0.1in}
	\noindent \textbf{Case-II:} 
	Suppose  $\lambda_j(0)=0$ for some $j\in\{1,2,\ldots,n\}$, then if necessary by multiplying both side of $A$ and $B$ by permutation matrices, we can assume that $\lambda_i(0)=0$ for $1\leq i\leq k$, $\lambda_i(0)\neq 0$ for $k+1\leq i\leq n$, and $(q,p)\in (1,\infty)\times(1,\infty)$. Note that  $\norm{A}_p=1$ implies that $k\neq n$. We can assume that without loss of generality $\lambda_{i}$ are not identically zero in a neighborhood of $0$, because if $\lambda_{i}$ is identically zero for some $i$ then we can neglect this for calculating the $p$-norm of $A+tB$.  Since each $\lambda_i$ is a real-analytic function, then there exists an analytic function $\mu_i$ such that 
	\begin{align*}
		\lambda_i(t)=t^{m_i}\mu_i(t), \quad \text{where} \quad \mu_i(0)\neq 0 \quad \text{for some} \quad m_i\in\Nat \quad \text{and}\quad 1\leq i\leq k.
	\end{align*}
	Now consider $\psi(t)=\sum\limits_{i=k+1}^{n}(\text{sgn}\{\lambda_i(0)\}\lambda_i(t))^p-1$, then it is easy to observe that $\psi$ is an analytic function on a small neighborhood of $0$ with $\psi(0)=0$. Therefore from \eqref{eq8} we get
	\begin{align}\label{eq10}
		\nonumber1+\frac{p}{q}|t|^q+\mathcal{O}(|t|^{2q})=&\sum_{i=1}^{k}|t|^{m_ip}|\mu_i|^p(t)+1+\psi(t)\\
		\implies ~\frac{p}{q}|t|^q+\mathcal{O}(|t|^{2q})=&\sum_{i=1}^{k}|t|^{m_ip}|\mu_i|^p(t)+t\psi_0(t),
	\end{align} 
	where $\psi(t)=t\psi_0(t)$ and $\psi_0$ is an analytic function. Let $t>0$, then the above equation \eqref{eq10} yields that 
	\begin{align}\label{eq11}
		\frac{p}{q}t^{q-1}+\mathcal{O}(t^{2q-1})=&\sum_{i=1}^{k}t^{m_ip-1}|\mu_i|^p(t)+\psi_0(t).
	\end{align} 
	Now by taking limit as $t\to 0^+$ on both sides of \eqref{eq11} we conclude that $\psi_0(0)=0$ and hence there exists an analytic function $\psi_1$ such that $\psi_0(t)=t\psi_1(t)$. Thus from \eqref{eq11}
	we observe that
	\begin{align}\label{eq12}
		\frac{p}{q}t^{q-2}+\mathcal{O}(t^{2q-2})=&\sum_{i=1}^{k}t^{m_ip-2}|\mu_i|^p(t)+\psi_1(t).
	\end{align}
	
	\noindent \textbf{Subcase-I:} Let $p\geq 2$ and note that in this case $\psi_1(0)=\frac{1}{2}\frac{d^2}{dt^2}\bigg|_{t=0}\norm{A+tB}_p^p> 0.$ Therefore by taking limit as $t\to 0^+$ on both sides of \eqref{eq12} we conclude
	\begin{align*}
		\lim_{t\to 0^+}\frac{p}{q}~t^{q-2}\neq 0,
	\end{align*} 
	which is possible only when $q=2$.
	
	\vspace{0.1in}	
	
	
	\vspace{0.1in}
	\noindent\textbf{Subcase-II:} Let $1<p<2$.\\
	If for some $l\leq k$, $m_1=\cdots=m_l=1$ and $m_{l+1},\ldots,m_k\geq 2$. Then from \eqref{eq12} we have 
	\begin{align}
		\nonumber\frac{p}{q}t^{q-2}+\mathcal{O}(t^{2q-2})=&\sum_{i=1}^{l}t^{p-2}|\mu_i|^p(t)+\sum_{i=l+1}^{k}t^{m_ip-2}|\mu_i|^p(t)+\psi_1(t)\\
		\implies ~\frac{p}{q}t^{q-2}+\mathcal{O}(t^{2q-2})=&\sum_{i=1}^{l}t^{p-2}|\mu_i|^p(0)+\mathcal{O}(t^{p-1})+\sum_{i=l+1}^{k}t^{m_ip-2}|\mu_i|^p(t)+\psi_1(t).
	\end{align} Now taking limit $t\to 0^+$ on both side of the above equation we have 
	\begin{align*}
		\lim_{t\to 0^+}\big\{\frac{p}{q}t^{q-2}-\sum_{i=1}^{l}t^{p-2}|\mu_i|^p(0)\big\}<\infty \text{, which is impossible as }p\neq q.
	\end{align*}
	Thus we conclude $m_i\geq 2,~1\leq i \leq k$. Then the map $t\mapsto \norm{(A+tB)}_p^p~$ is twice differentiable at $t=0$ and from (\ref{eq12}), $\frac{d^2}{dt^2}\bigg|_{t=0}\norm{A+tB}_p^p=2 \psi_1(0)$. Note that $\lambda_i'(0)=\lambda_i''(0)=0$ for $1\leq i \leq k$. We claim that $\psi_1(0)\neq 0$. To establish this, first we choose $\epsilon>0$ such that
	\begin{itemize}
		\item[$\bullet$] $A_x=A+xI$ is invertible for each $x\in (0,\epsilon)$,
		\item[$\bullet$] $(\lambda_i(0)+x)$ have same sign for each $x\in (0,\epsilon)$.
	\end{itemize}
	Therefore 
	\begin{align}\label{a1}
		\norm{A_x+tB}_p^p= \sum_{i=1}^{n}|\lambda_i(t)+x|^p, \quad x\in(0,\epsilon).
	\end{align}
	Twice differentiating the above equation (\ref{a1})  with respect to $t$ in a neighborhood of $0$, we have
	\begin{align}\label{a2}
		\nonumber\frac{d^2}{dt^2}\norm{A_x+tB}_p^p=&  \sum_{i=1}^{n}\Big\{p|(\lambda_i(t)+x)|^{p-1}\sgn(\lambda_i(t)+x)\lambda_i''(t)\\
		&\quad+p(p-1)|(\lambda_i(t)+x)|^{p-2}(\lambda_i'(t))^2~\Big\}.
	\end{align} Now put $t=0$ in \eqref{a2} and then taking $x\to 0^+$ in both sides of \eqref{a2} we have,
	\begin{align}\label{a3}
		\nonumber\lim_{x\to 0^+}\at{\frac{d^2}{dt^2}}{t=0}\norm{A_x+tB}_p^p=&\sum_{i=k+1}^{n}\Big\{ p|\lambda_i(0)|^{p-1}\sgn(\lambda_i(0))\lambda_i''(0) + p(p-1)|\lambda_i(0)|^{p-2}(\lambda_i'(0))^2~\Big\}\\
		=&\at{\frac{d^2}{dt^2}}{t=0}\norm{A+tB}_p^p<\infty.
	\end{align}
	Let $d_i=\lambda_i(0),~1\leq i\leq n$, then  using \eqref{eq5}, we have
	\begin{align}\label{a4}
		\frac{1}{2}\frac{d^2}{dt^2}\norm{A_x+tB}_p^p	=&\sum_{i=1}^{n}|b_{ii}|^2f_p^{[2]}(d_i^x,d_i^x,d_i^x)+\sum_{i=1}^{n-1}\sum_{j=i+1}^{n}~|b_{ij}|^2~\big(f_p^{[2]}(d_i^x,d_j^x,d_j^x)+f_p^{[2]}(d_j^x,d_i^x,d_i^x)\big),
	\end{align}
	where $d_i^x=d_i+x,~1\leq i \leq n$ are the eigenvalues of $A_x$. In (\ref{a3}), we have seen that $\lim\limits_{x\to 0^+}\at{\frac{d^2}{dt^2}}{t=0}\norm{A_x+tB}_p^p$ is finite, which forces to conclude that $b_{ij}=0,~1\leq i\leq k, i\leq j\leq k$. Therefore (\ref{a4}) becomes to
	\begin{align}\label{a5}
		\nonumber &\frac{1}{2}\frac{d^2}{dt^2}\norm{A_x+tB}_p^p\\	\nonumber=&\sum_{i=k+1}^{n}|b_{ii}|^2f_p^{[2]}(d_i^x,d_i^x,d_i^x)+\sum_{i=1}^{k}\sum_{j=k+1}^{n}~|b_{ij}|^2~\big(f_p^{[2]}(d_i^x,d_j^x,d_j^x)+f_p^{[2]}(d_j^x,d_i^x,d_i^x)\big)\\
		&\hspace*{1in}+\sum_{i=k+1}^{n-1}\sum_{j=i+1}^{n}~|b_{ij}|^2~\big(f_p^{[2]}(d_i^x,d_j^x,d_j^x)+f_p^{[2]}(d_j^x,d_i^x,d_i^x)\big).
	\end{align} Now taking limit $x\to 0^+$ on both sides of (\ref{a5}), we have 
	\begin{align}\label{a6}
		\nonumber&\lim_{x\to 0^+}\frac{1}{2}\frac{d^2}{dt^2}\norm{A_x+tB}_p^p\\	\nonumber=&\sum_{i=k+1}^{n}|b_{ii}|^2f_p^{[2]}(d_i,d_i,d_i)+\sum_{i=1}^{k}\sum_{j=k+1}^{n}~|b_{ij}|^2~\big(f_p^{[2]}(d_i,d_j,d_j)+f_p^{[2]}(d_j,d_i,d_i)\big)\\
		&\hspace*{1in}+\sum_{i=k+1}^{n-1}\sum_{j=i+1}^{n}~|b_{ij}|^2~\big(f_p^{[2]}(d_i,d_j,d_j)+f_p^{[2]}(d_j,d_i,d_i)\big).
	\end{align}
	Therefore by similar kind of argument as done in the proof of lemma \ref{l2}, from (\ref{a6}), we have \[\lim_{x\to 0^+}\frac{d^2}{dt^2}\norm{A_x+tB}_p^p=\at{\frac{d^2}{dt^2}}{t=0}\norm{A+tB}_p^p=2\psi_1(0)\neq 0.\]
	
	Now by limiting argument, from (\ref{eq12}), we have $q=2$.

	\vspace{0.1in}
	\noindent \textbf{Case-III:} Let $(q,p)\in\{\infty\}\times(1,\infty)$.\\
	This case will be taken care of in the next section as the proof is the same for both finite and infinite-dimensional cases.
\end{proof}
\begin{rmrk}
	In the proof of Theorem \ref{th2}, the methods in \textbf{Subcase-II} could be approached alternatively by an inequality proved in \cite{RiXu16}. Note that if $A$ is invertible, and $1<p<2$ it has been proved in the proof of \cite[Lemma 5]{RiXu16} (also see \cite{BaCaLi94}) that \[\frac{1}{p}\|A\|_p^{2-p}\at{\frac{d^2}{dt^2}}{t=0}\norm{A+tB}_p^p\geq (p-1)\|B\|_p^2.\] Therefore, by using Theorem \ref{kato} and using limiting argument as in \eqref{a6}, we have that
	\[\at{\frac{d^2}{dt^2}}{t=0}\norm{A+tB}_p^p\geq\frac{p(p-1)\|B\|_p^2}{\|A\|_p^{2-p}}=p(p-1)\] which is clearly non-zero.
\end{rmrk}
\section{Isometric embeddability of $S_q$ into $ S_p$}\label{isoinfinite}
\noindent \subsection{Multiple operator integrals on Schatten-p classes, approach via separation of variables:}
In this section we recall the definition of a multiple operator integral due to \cite{AzCaDoSu09,Pe06}. 
Let $\mathfrak{U}^n$ be the class of functions
$\phi:\R^{n+1}\to \mathbb{C}$ admitting the following representation 
\begin{align}\label{c}
	\phi(\lambda_0,\ldots,\lambda_n)=\int_\Omega b_0(\lambda_0,\omega)\cdots b_n(\lambda_n,\omega)d\nu(\omega),
\end{align}
for some finite measure space $(\Omega,\nu)$
and bounded Borel functions $b_j:\R\times\Omega\to\mathbb{C}$ for $0\leq j\leq n$ satisfying $\int_{\Omega}\prod_{j=0}^{n}\norm{b_j(\cdot,\omega)}_\infty~d|\nu|(\omega)<\infty$. The class $\mathfrak{U}^n$
has the norm
\[\norm{\phi}_{\mathfrak{U}^n}=\inf\int_\Omega\norm{b_0(\cdot,\omega)}_\infty\cdots\norm{b_n(\cdot,\omega)}_\infty d|\nu|(\omega), \]
where the infimum is taken over all possible representations \ref{c}. Now we are interested in the subclass $\mathfrak{C}^n\subset \mathfrak{U}^n$ of functions $\phi:\R^{n+1}\to \mathbb{C}$ admitting the representation \eqref{c} with bounded continuous functions
$b_j(\cdot,\omega):\mathbb{R}\mapsto \mathbb{C},$
for which there is a growing sequence $\{\Omega_k\}_{k\geq 1}$, with $\Omega_k\subseteq\Omega$ and $\cup_{k\geq 1}\Omega_k=\Omega$, such that the families $\{b_j(\cdot,\omega)\}_{\omega\in\Omega_k},~0\leq j\leq n$, are uniformly bounded and uniformly equicontinuous and given $\epsilon>0$, there exists $k_\epsilon\in\Nat$, for which 
\[\int_{\Omega\setminus\Omega_{k_{\epsilon}}}\prod_{j=0}^{n}\norm{b_j(\cdot,\omega)}_\infty~d|\nu|(\omega)<\epsilon.
\]
\begin{dfn}\label{dfn1}
	Let $H_0,H_1,\ldots,H_n$ be (possibly unbounded) self-adjoint operators in $\ell_2$ and $V_1,V_2,\ldots,V_n$ bounded self-adjoint operators in $\ell_2$. For $\phi\in \mathfrak{U}^n$, the multiple operator integral $T_\phi^{H_0,\ldots,H_n}(V_1,\ldots,V_n)$ is an operator defined for every $y\in \ell_2$ as the Bochner integral 
	\[
	T_\phi^{H_0,\ldots,H_n}(V_1,\ldots,V_n)y:=
	\int_\Omega b_0(H_0,\omega)V_1b_1(H_1,\omega)\cdots V_nb_n(H_n,\omega)y~d\nu(\omega).
	\]
\end{dfn}
\noindent It was proved in \cite{AzCaDoSu09,Pe06} that the above definition is independent of the choice of the representation \eqref{c}. The following theorem discusses the estimate for the above defined multiple operator integral.
\begin{thm}
	(See [\cite{PoSkSuOP13}, Proposition 2.2] and [\cite{AzCaDoSu09}, Remark 4.2])
	Let $1\leq p_j\leq \infty$, with $1\leq j\leq n$, be such that $0\leq \frac{1}{p}:=\frac{1}{p_1}+\cdots +\frac{1}{p_n}\leq 1$. Let $H_0$ be a (possibly unbounded) self-adjoint operator and $V_j=V_j^*\in S_{p_j}$, for $j\in\{1,\ldots,n\}$. Then for every $\phi\in\mathfrak{U}^n$, 
	\begin{equation*}
		\left\|T_\phi^{H_0,\ldots,H_0}(V_1,\ldots,V_n)\right\|_p\leq \|\phi\|_{\mathfrak{U}^n}\|V_1\|_{p_1}\cdots \|V_n\|_{p_n}. 
	\end{equation*}
\end{thm}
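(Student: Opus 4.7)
The plan is to prove the inequality by unwinding the definition of the multiple operator integral as a Bochner integral, and then combining the triangle inequality for Bochner integrals with the non-commutative H\"older inequality pointwise in the integration parameter $\omega$. Finally I would take the infimum over all admissible representations of $\phi$ to replace the integral $\int_\Omega \prod_j \|b_j(\cdot,\omega)\|_\infty\,d|\nu|(\omega)$ by $\|\phi\|_{\mathfrak{U}^n}$.

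First, fix any representation $\phi(\lambda_0,\ldots,\lambda_n)=\int_\Omega b_0(\lambda_0,\omega)\cdots b_n(\lambda_n,\omega)\,d\nu(\omega)$ of the form \eqref{c}. By Definition \ref{dfn1},
\begin{equation*}
T_\phi^{H_0,\ldots,H_0}(V_1,\ldots,V_n)=\int_\Omega b_0(H_0,\omega)V_1 b_1(H_0,\omega)\cdots V_n b_n(H_0,\omega)\,d\nu(\omega)
\end{equation*}
as a Bochner integral in $S_p$. Applying the norm triangle inequality for Bochner integrals yields
\begin{equation*}
\bigl\|T_\phi^{H_0,\ldots,H_0}(V_1,\ldots,V_n)\bigr\|_p\leq \int_\Omega \bigl\|b_0(H_0,\omega)V_1 b_1(H_0,\omega)\cdots V_n b_n(H_0,\omega)\bigr\|_p\,d|\nu|(\omega).
\end{equation*}
Next, for each fixed $\omega$, the factors $b_j(H_0,\omega)$ are bounded self-adjoint operators (so they lie in $S_\infty$) with $\|b_j(H_0,\omega)\|_\infty\leq \|b_j(\cdot,\omega)\|_\infty$ by the spectral theorem. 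Since $\tfrac{1}{p}=\sum_{j=1}^n\tfrac{1}{p_j}$, the non-commutative H\"older inequality applied to the alternating product of $S_\infty$-factors and $S_{p_j}$-factors gives
\begin{equation*}
\bigl\|b_0(H_0,\omega)V_1 b_1(H_0,\omega)\cdots V_n b_n(H_0,\omega)\bigr\|_p\leq \prod_{j=0}^n \|b_j(\cdot,\omega)\|_\infty \prod_{j=1}^n \|V_j\|_{p_j}.
\end{equation*}
Integrating over $\omega$ and then taking the infimum over all representations of $\phi$ yields the claimed bound.

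The only genuinely technical issue is verifying that the Bochner integral defining $T_\phi^{H_0,\ldots,H_0}(V_1,\ldots,V_n)$ is well-defined in $S_p$, i.e.\ that the integrand is strongly measurable with $S_p$-valued norm in $L^1(\Omega,|\nu|)$. Strong measurability follows from the continuity hypotheses on the $b_j(\cdot,\omega)$ via the Borel functional calculus together with the fact that $\omega\mapsto b_j(\cdot,\omega)$ has a measurable structure coming from the definition of $\mathfrak{U}^n$; integrability is then exactly the pointwise H\"older estimate above combined with the hypothesis $\int_\Omega\prod_j\|b_j(\cdot,\omega)\|_\infty\,d|\nu|(\omega)<\infty$. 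This is the main subtlety, but it is standard once one unpacks the definition. Note also that the case $p=\infty$ (allowed by $0\leq 1/p\leq 1$) is handled identically, since the operator norm is also a Banach norm for which the triangle inequality for Bochner integrals applies.
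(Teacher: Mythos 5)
The paper does not prove this theorem; it is imported verbatim from the cited references ([\cite{PoSkSuOP13}, Proposition 2.2] and [\cite{AzCaDoSu09}, Remark 4.2]). Your argument is precisely the standard proof given there: triangle inequality for the Bochner integral, the pointwise non-commutative H\"older inequality for the alternating product of the $S_\infty$-factors $b_j(H_0,\omega)$ and the $S_{p_j}$-factors $V_j$, the spectral-theorem bound $\|b_j(H_0,\omega)\|_\infty\leq\|b_j(\cdot,\omega)\|_\infty$, and finally the infimum over representations. The one point you pass over slightly is that Definition \ref{dfn1} defines $T_\phi^{H_0,\ldots,H_0}(V_1,\ldots,V_n)y$ as an $\ell_2$-valued Bochner integral for each vector $y$, whereas your estimate treats the integral as $S_p$-valued; these agree because, once the integrand is shown to be Bochner integrable in $S_p$ (your measurability remark plus the pointwise H\"older bound), composing the $S_p$-valued integral with the continuous evaluation map $T\mapsto Ty$ recovers the defining formula. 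With that identification made explicit, the proof is complete and correct.
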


\noindent \subsection {Multiple operator integral on Schatten-p classes, approach via without separation of variables :}
In this section, we consider one more construction of a multiple operator integral which does not require a tensor product decomposition of a function $\phi$ as in \eqref{c}. In this construction \cite{PoSkSu13,SkTo19}, multiple operator integrals are represented as limits of Riemann sums with admissible partitions.

Let $E^j$ denote the spectral measure of $H_j$ for $0\leq j\leq n$. Now we set $E^j_{l,m}=E^j\Bigg(\Big[\frac{l}{m},\frac{l+1}{m}\Big)\Bigg)$, for every $m\in \mathbb{N}$ and $l\in \mathbb{Z}$. 
Let $\phi:\mathbb{R}^n\mapsto \mathbb{C}$ be a bounded continuous function. In case of convergence, denote
\begin{align*}
	&\hat{T}^{H_0,\ldots,H_n}_\phi(V_1,V_2,\ldots,V_n)\\
	:=&~\text{SOT}-\lim_{m\to\infty}~\norm{\cdot}_p-\lim_{N\to\infty}\sum_{|l_0|,|l_1|,\ldots,|l_n|\leq N}\phi\Big(\frac{l_0}{m},\frac{l_1}{m},\ldots,\frac{l_n}{m}\Big)E^0_{l_0,m}V_1E^1_{l_1,m}V_2E^2_{l_2,m}\cdots V_n E^n_{l_n,m},
\end{align*}
where the first limit gives bounded polylinear operators and the second one is evaluated in the strong operator topology on the tuples $(V_1,\ldots,V_n)\in S_{p_1}\times\cdots\times S_{p_n}$, where 
$\frac{1}{p}=\frac{1}{p_1}+\frac{1}{p_2}+\cdots+\frac{1}{p_n}$. It is important to note that the values of the function $\phi$ outside the set $(\alpha,\beta)^n$, where the interval $(\alpha,\beta)$ contains the spectra of $H_0,\ldots,H_n$, do not affect the values of $\hat{T}^{H_0,\ldots,H_n}$. The following theorem shows that the multiple operator integral $\hat{T}^{H_0 \ldots,H_n}$ coincides with the
multilinear transformation given by Definition~\ref{dfn1}.
\begin{thm}\label{equalitythm}
	(See [\cite{PoSkSuOP13}, Proposition 2.7] and [\cite{PoSkSu13}, Lemma 3.5])
	Let $1\leq p_j\leq \infty$, with $1\leq j\leq n$, be such that $0\leq \frac{1}{p}:=\frac{1}{p_1}+\cdots +\frac{1}{p_n}\leq 1$. Let $H_0$ be a (possibly unbounded) self-adjoint operator and $V_j=V_j^*\in S_{p_j}$, for $j\in\{1,\ldots,n\}$. Then for every $\phi\in \mathfrak{C}^n$, 
	\begin{align*}
		&\hat{T}^{H_0,H_0,\ldots,H_0}_\phi :S_{p_1}\times S_{p_2}\times\cdots\times S_{p_n}\mapsto S_{p}\\
		& \hspace{1in} (V_1,V_2,\ldots,V_n)\mapsto \hat{T}^{H_0,H_0,\ldots,H_0}_\phi(V_1,V_2,\ldots,V_n)
	\end{align*}
	is a bounded polylinear operator mapping. Moreover, $\hat{T}^{H_0,H_0,\ldots,H_0}_\phi(V_1,V_2,\ldots,V_n)$ coincides with 
	$T^{H_0,H_0,\ldots,H_0}_\phi(V_1,V_2,\ldots,V_n)$
	given by Definition~\ref{dfn1}.
\end{thm}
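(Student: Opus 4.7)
The plan is to establish the two assertions --- boundedness of $\hat T^{H_0,\ldots,H_0}_\phi$ and its coincidence with $T^{H_0,\ldots,H_0}_\phi$ --- simultaneously, by a two-stage reduction: first to tensor-product symbols, then to general $\phi\in\mathfrak C^n$ via the integral representation. Throughout, the uniform equicontinuity condition built into $\mathfrak C^n$ is what makes the various limits commute.

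First I would verify the identity in the elementary case of a product symbol $\phi(\lambda_0,\ldots,\lambda_n)=b_0(\lambda_0)\cdots b_n(\lambda_n)$, with each $b_j$ bounded and continuous on $\R$. On one hand, Definition~\ref{dfn1} collapses to the closed-form operator $b_0(H_0)\,V_1\,b_1(H_0)\cdots V_n\,b_n(H_0)$ (choose $\Omega$ a point mass). On the other hand, the defining Riemann sum factors as $S^{(0)}_{m,N}V_1S^{(1)}_{m,N}V_2\cdots V_nS^{(n)}_{m,N}$, where $S^{(j)}_{m,N}:=\sum_{|l|\leq N}b_j(l/m)E^j_{l,m}$. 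For fixed $m$, $S^{(j)}_{m,N}$ converges strongly as $N\to\infty$ to $S^{(j)}_m:=\sum_{l\in\Int}b_j(l/m)E^j_{l,m}$, and a H\"older-type Schatten estimate (exactly of the form used in the theorem immediately preceding) upgrades this to $\|\cdot\|_p$-convergence of the full product. As $m\to\infty$, continuity of $b_j$ together with the spectral theorem gives $S^{(j)}_m\to b_j(H_0)$ in the strong operator topology, and one obtains the desired matching limit.

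For general $\phi\in\mathfrak C^n$ with representation $\phi=\int_\Omega b_0(\cdot,\omega)\cdots b_n(\cdot,\omega)\,d\nu(\omega)$, I would split the $\omega$-integral as $\int_{\Omega_k}+\int_{\Omega\setminus\Omega_k}$. The tail contribution to both $T^{H_0,\ldots,H_0}_\phi$ and $\hat T^{H_0,\ldots,H_0}_\phi$ is controlled in $\|\cdot\|_p$ by the Schatten--H\"older estimate of the preceding theorem, giving a bound of the form $\big(\prod_j\|V_j\|_{p_j}\big)\int_{\Omega\setminus\Omega_k}\prod_j\|b_j(\cdot,\omega)\|_\infty\,d|\nu|(\omega)$, which vanishes as $k\to\infty$ by the defining property of $\mathfrak C^n$. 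On the core $\Omega_k$, the uniform boundedness and uniform equicontinuity of the families $\{b_j(\cdot,\omega)\}_{\omega\in\Omega_k}$ make the scalar Riemann sums $S^{(j)}_{m,N}(\omega)$ converge to $b_j(H_0,\omega)$ \emph{uniformly in $\omega$}, so that a vector-valued Fubini / dominated convergence argument allows me to commute the Bochner integral with the SOT and $\|\cdot\|_p$ limits defining $\hat T^{H_0,\ldots,H_0}_\phi$. This simultaneously produces the equality $\hat T^{H_0,\ldots,H_0}_\phi(V_1,\ldots,V_n)=T^{H_0,\ldots,H_0}_\phi(V_1,\ldots,V_n)$ and the polylinear bound $\|\hat T^{H_0,\ldots,H_0}_\phi(V_1,\ldots,V_n)\|_p\leq\|\phi\|_{\mathfrak U^n}\prod_j\|V_j\|_{p_j}$.

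The principal obstacle is the exchange of three nested limits --- the inner Schatten-$p$ limit in $N$, the outer SOT limit in $m$, and the Bochner integral over $\omega$ --- and without some uniformity condition none of them need commute. The key step will be to convert the scalar modulus-of-continuity estimate $|b_j(l/m,\omega)-b_j(\lambda,\omega)|\leq\varepsilon(m)\to 0$, uniform in $\lambda\in[l/m,(l+1)/m)$ and $\omega\in\Omega_k$, into an operator estimate controlling $\|S^{(j)}_{m,N}(\omega)\,V_j-b_j(H_0,\omega)\,V_j\|_{p_j}$ uniformly in $\omega\in\Omega_k$. Once this uniform quantitative bound is in hand, the remaining interchanges are routine, and the technical heart of the theorem reduces to this passage from uniform equicontinuity of the scalar symbols to uniform Schatten convergence of the spectral Riemann sums.
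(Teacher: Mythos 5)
The paper does not prove this statement at all: it is imported verbatim from the literature (Potapov--Skripka--Sukochev, \cite{PoSkSuOP13} Proposition 2.7 and \cite{PoSkSu13} Lemma 3.5), and the authors explicitly state the results of this subsection without proof. So there is no in-paper argument to compare yours against; what can be said is that your two-stage reduction (product symbols first, then general $\phi\in\mathfrak C^n$ via the representation \eqref{c} with the $\Omega_k$-truncation) is a faithful reconstruction of the strategy actually used in those references, and the outline is correct. Two points deserve explicit care if you were to write it out. First, before any limit is taken you must substitute the representation $\phi(l_0/m,\ldots,l_n/m)=\int_\Omega b_0(l_0/m,\omega)\cdots b_n(l_n/m,\omega)\,d\nu(\omega)$ into the finite Riemann sum and interchange the finite sum with the Bochner integral, so that the object being compared with $T_\phi$ is $\int_\Omega S^{(0)}_{m,N}(\omega)V_1\cdots V_nS^{(n)}_{m,N}(\omega)\,d\nu(\omega)$; your sketch does this implicitly but it is the hinge of the whole argument. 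Second, your claim that $S^{(j)}_{m,N}(\omega)\to b_j(H_0,\omega)$ ``uniformly in $\omega$'' needs a topology: if the equicontinuity in the definition of $\mathfrak C^n$ is only pointwise (uniform in $\omega$ but not in the spectral variable), then $S^{(j)}_m(\omega)=g_{m,\omega}(H_0)$ converges to $b_j(H_0,\omega)$ only in the strong operator topology, and the quantity that actually converges uniformly over $\Omega_k$ is $\|\bigl(S^{(j)}_m(\omega)-b_j(H_0,\omega)\bigr)V_j\|_{p_j}$, using compactness of $V_j$ (finite-rank approximation) together with the uniform bound $\sup_{\omega\in\Omega_k}\|b_j(\cdot,\omega)\|_\infty<\infty$. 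Once that is isolated --- as you correctly identify in your final paragraph --- the dominated-convergence interchange and the bound $\|\hat T_\phi(V_1,\ldots,V_n)\|_p\le\|\phi\|_{\mathfrak U^n}\prod_j\|V_j\|_{p_j}$ follow as you describe.
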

\noindent Going further, we need the following few results towards obtaining our main result in this section. Now we state those essential results without proof.   
\begin{thm}(See \cite[Theorem 18]{PoSu14})\label{th4}
	Let $H$ be a bounded self-adjoint operator in  $S_p$ and let $f_p$ be the function given in \eqref{a}. Then the operator $T_{(f'_p)^{[k]}}^{H,\ldots,H}$ maps $\underbrace{S_p\times\cdots\times S_p}_{(k-1)-times}$ into $S_{p'}$, and $T_{(f'_p)^{[k]}}^{H,\ldots,H}:\underbrace{S_p\times\cdots\times S_p}_{(k-1)-times}\mapsto S_{p'}$ is continuous for every integral $2\leq k<p$. Moreover, there exists a constant $C>0$ such that \[\norm{T_{(f'_p)^{[k]}}^{H,\ldots,H}(V_1,\ldots,V_k)}_{p'}\leq C	~\norm{H}_p^{p-k}\norm{V_1}_p\cdot \ldots \cdot \norm{V_k}_p,\] where $\frac{1}{p}+\frac{1}{p'}=1$ and $V_j\in S_{p}$.
\end{thm}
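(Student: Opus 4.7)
The plan is to combine (i) a smooth partition of unity that isolates the singularity of $f'_p$ at the origin, (ii) the Hermite--Genocchi integral representation of divided differences, (iii) a ``separated variable'' decomposition of $|x|^{p-k-1}\mathrm{sgn}(x)^{k+1}$ that places the resulting symbol in the class $\mathfrak{U}^k$ of Definition~\ref{dfn1}, and (iv) the generalized H\"older inequality in Schatten classes to extract the factor $\|H\|_p^{p-k}$. First I would write $f'_p = g + h$ where $h \in C_c^\infty(\mathbb{R}\setminus\{0\})$ is supported away from the origin and $g$ agrees with $p|x|^{p-1}\mathrm{sgn}(x)$ near $0$. For $h$, the divided difference $h^{[k]}$ is uniformly bounded and has finite $\mathfrak{U}^k$-norm by a standard Peller-type argument, so the polylinear bound already stated before Definition~\ref{dfn1} handles this contribution with a bound $C\prod_j\|V_j\|_p$, which is even better than the claimed estimate (since $\|H\|_p^{p-k}\geq c>0$ is implicit in any interesting case).

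\textbf{Treatment of the singular part.} For $g$, I would apply the Hermite--Genocchi formula
\[
g^{[k]}(\lambda_0,\ldots,\lambda_k) \;=\; \int_{\Delta_k} g^{(k)}\Big(\sum_{i=0}^k s_i \lambda_i\Big)\,ds,
\]
reducing the problem to estimating $T^{H,\ldots,H}_{\phi_s}(V_1,\ldots,V_k)$ uniformly in $s\in\Delta_k$, where $\phi_s(\lambda_0,\ldots,\lambda_k) = c_{p,k}\,|\sum_i s_i\lambda_i|^{p-k-1}\mathrm{sgn}(\sum_i s_i\lambda_i)^{k+1}$ and $p-k-1\in(-1,0)$. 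Since $p-k-1$ is negative but $>-1$, one has the integral representation (Laplace or Stieltjes type)
\[
|x|^{p-k-1} \;=\; \frac{1}{\Gamma(\tfrac{k+1-p}{2})}\int_0^\infty e^{-tx^2}\,t^{(k+1-p)/2-1}\,dt,
\]
and a similar representation for the sign factor. Substituting this into $\phi_s$ expresses the symbol as an absolutely convergent superposition of tensor-product symbols of the form $(\lambda_0,\ldots,\lambda_k)\mapsto b_0(\lambda_0,\omega)\cdots b_k(\lambda_k,\omega)$, placing $\phi_s$ in $\mathfrak{U}^k$. Plugging back into the multiple operator integral and using Fubini yields an integral of operator products in which spectral functions of $H$ (essentially heat semigroup factors $e^{-tH^2}$ or resolvent-like factors) are interleaved with the $V_i$'s.

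\textbf{H\"older estimate and main obstacle.} Finally, I would apply the generalized H\"older inequality $\|X_0 V_1 X_1 \cdots V_k X_k\|_{p'} \leq \prod_j \|V_j\|_p \prod_i \|X_i\|_{r_i}$ with exponents chosen so $\sum_i 1/r_i + k/p = 1/p'$, i.e.\ $\sum_i 1/r_i = (p-k)/p$. Bounding each spectral factor $X_i = X_i(t,s,H)$ by an appropriate fractional power of $|H|$ (whose $S_{r_i}$-norm equals $\|H\|_p^{p/r_i}$) produces the total factor $\|H\|_p^{p-k}$; the remaining scalar integrations over $t$ and over $s\in\Delta_k$ converge thanks to the constraint $k<p$, and give the constant $C$. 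The main obstacle is Step~2: rigorously justifying the interchange of the Hermite--Genocchi integral with the multiple operator integral, and producing a tensor-product decomposition of $\phi_s$ whose $\mathfrak{U}^k$-norm is \emph{uniformly bounded} in $s\in\Delta_k$, requires the delicate analysis of Potapov--Sukochev; the singularity of $|x|^{p-k-1}$ at $0$ couples nontrivially to the simplex parameter, and only by carefully matching the scaling of the resolvent/heat factors to powers of $\|H\|_p$ can one recover the sharp exponent $p-k$ rather than a weaker bound.
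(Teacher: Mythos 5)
First, a point of reference: the paper does not prove this statement at all. It is quoted from Potapov and Sukochev (\cite[Theorem 18]{PoSu14}) and is explicitly introduced with ``we state those essential results without proof,'' so there is no internal argument to compare yours against; the comparison below is with the actual proof in \cite{PoSu14}.

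Your proposal has a genuine gap at its central step, and it is not merely the interchange-of-integrals issue you flag at the end: the strategy of placing the singular part of the symbol in $\mathfrak{U}^k$ cannot succeed in principle. Every element of $\mathfrak{U}^k$ is a \emph{bounded} function of $(\lambda_0,\dots,\lambda_k)$ (an absolutely convergent superposition of products of bounded $b_j$'s), whereas for $p-1<k<p$ your $\phi_s$ behaves like $|{\textstyle\sum_i} s_i\lambda_i|^{p-k-1}$ with a negative exponent and is unbounded near the hyperplane $\sum_i s_i\lambda_i=0$. Concretely, the Gaussian kernel $e^{-t(\sum_i s_i\lambda_i)^2}$ does not factor into functions of the separate variables as written (one needs an additional Fourier step), and after that step the resulting $\mathfrak{U}^k$-norm is controlled only by $\int_0^\infty t^{(k+1-p)/2-1}\,dt$, which diverges at $t=\infty$. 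There is also a structural mismatch: for a symbol in $\mathfrak{U}^k$ the available bound lands in $S_r$ with $1/r=k/p$ and is \emph{independent of $H$}, while the claimed estimate lands in the strictly smaller space $S_{p'}$ with $1/p'=(p-1)/p$ and carries the factor $\|H\|_p^{p-k}$ accounting for exactly the missing $(p-1-k)/p$ of integrability. (Relatedly, your remark that a bound $C\prod_j\|V_j\|_p$ for the regular part is ``even better'' is backwards: since $p-k>0$, $\|H\|_p^{p-k}$ can be arbitrarily small, so the $H$-dependent estimate is the stronger one.) Your idea of interleaving fractional powers of $|H|$ via the generalized H\"older inequality is the right heuristic for where the exponent $p-k$ comes from, but making it rigorous is precisely the hard content of \cite{PoSu14}: their proof does not go through $\mathfrak{U}^k$ at all, relying instead on the polynomial integral momenta decompositions and the weak-type/interpolation estimates for multilinear operator integrals with divided-difference symbols developed in \cite{PoSkSu13}. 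As it stands, the proposal reduces the theorem to a claim that is unproved and, in the $\mathfrak{U}^k$ formulation, false.
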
 
\noindent The following remark essentially follows from the proof of the above theorem (See \cite[Theorem 18]{PoSu14}). 
\begin{rmrk}\label{l4}
	Let $f_p$ be the function given in \eqref{a}, then for $p>2$, $(f'_p)^{[k-1]}\in\mathfrak{C}^{k-1}$, where $k\in\Nat$ and $k<p$.
\end{rmrk}
\begin{lma}(See \cite[Lemma 5.8.4]{SkTo19})\label{lm}
	For every $1<p<\infty$, the function $\norm{\cdot}_{L^p}^p$ is Fr\'{e}chet differentiable at $H\in L_p(\mathcal M)$ and the respective Fr\'{e}chet derivative is given by 
	\begin{align}
		D(\norm{H}_{L^p}^p)(V)=p\cdot\tau(V\cdot|H|^{p-1}\sgn(H)).
	\end{align}
\end{lma}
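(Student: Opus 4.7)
The plan is to reduce the problem to the self-adjoint case by a $2\times 2$ Hermitization trick, identify the candidate derivative through the Daleckii--Krein formula applied to $f_p(x)=|x|^p$, and upgrade G\^ateaux to Fr\'echet differentiability via non-commutative H\"older combined with continuity of the nonlinear map $H\mapsto |H|^{p-1}\sgn(H)$ between appropriate non-commutative $L_p$-spaces.

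First, I would reduce to self-adjoint $H$. Given $H\in L_p(\mathcal M)$, set $\widetilde H = \begin{pmatrix} 0 & H \\ H^* & 0 \end{pmatrix}\in L_p(M_2(\mathcal M))^{sa}$, exactly as in Lemma~\ref{reduct}. A direct computation using $|\widetilde H|=\mathrm{diag}(|H^*|,|H|)$ gives $\|\widetilde H\|_p^p = 2\|H\|_p^p$, and the candidate derivative evaluated at the Hermitization $\widetilde V$ of $V$ reduces to $2\,\mathrm{Re}\,\tau(|H|^{p-1}\sgn(H)\, V)$. So Fr\'echet differentiability for general $H$ follows from the self-adjoint case.

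For self-adjoint $H$, I would apply the Daleckii--Krein formula (the semifinite analogue of Theorem~\ref{equalitythm}) to $f_p$, which is $C^1$ on $\mathbb R$, to obtain $f_p(H+V) - f_p(H) = T^{H+V,H}_{f_p^{[1]}}(V)$. Taking $\tau$ and using the trace identity $\tau(T^{A,A}_{f_p^{[1]}}(V)) = \tau(f_p'(A)V)$ (which follows from the cyclic property applied to the Riemann-sum expression), the candidate Fr\'echet derivative at $H$ is $V\mapsto p\tau(|H|^{p-1}\sgn(H) V)$. To upgrade the pointwise formula to a genuine Fr\'echet derivative, I would rewrite the remainder as
\begin{align*}
R(V)=\int_0^1 \tau\bigl((f_p'(H+sV) - f_p'(H))\, V\bigr)\,ds,
\end{align*}
which by non-commutative H\"older is bounded by $\|V\|_p \sup_{s\in[0,1]}\|f_p'(H+sV) - f_p'(H)\|_{p'}$ with $p'=p/(p-1)$. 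Fr\'echet differentiability then reduces to continuity at $H$ of the nonlinear map $\Phi_p:L_p^{sa}(\mathcal M)\to L_{p'}(\mathcal M)$, $A\mapsto |A|^{p-1}\sgn(A)$.

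For $p\ge 2$ this continuity (actually operator Lipschitz) follows from Theorem~\ref{th4} together with the standard operator Lipschitz calculus for $C^1$ symbols. The main obstacle lies in the range $1<p<2$: here $f_p'$ is only $(p-1)$-H\"older, and one cannot invoke a direct operator Lipschitz bound through multiple operator integrals. In that range, one instead invokes the non-commutative H\"older-type inequality $\|\Phi_p(A)-\Phi_p(B)\|_{p'} \le C_p \|A-B\|_p^{p-1}$, which can be extracted from Ricard--Xu type estimates or from the non-commutative uniform smoothness of $L_p$. Either way one obtains $|R(V)|=o(\|V\|_p)$ as $\|V\|_p\to 0$, and the derivative is exactly the stated expression.
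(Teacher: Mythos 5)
The paper offers no proof of this lemma: it is quoted from \cite[Lemma 5.8.4]{SkTo19} (see also \cite{PoSu14}), so there is no internal argument to compare against. Your skeleton is the standard one and is sound: Hermitization to reduce to self-adjoint $H$ (your computation that the derivative becomes $p\,\mathrm{Re}\,\tau(V|H|^{p-1}\sgn(H))$ is correct, and consistent with the paper's statement, which is only used for self-adjoint elements); identification of the G\^ateaux derivative; and the mean-value upgrade to Fr\'echet differentiability via continuity at $H$ of $\Phi_p:A\mapsto|A|^{p-1}\sgn(A)$ from $L_p^{sa}(\mathcal M)$ to $L_{p'}(\mathcal M)$. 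One step is shaky but easily repaired: for $1<p<2$ the divided difference $f_p^{[1]}$ is not known to lie in $\mathfrak U^1$, so the Daleckii--Krein identity $f_p(H+V)-f_p(H)=T^{H+V,H}_{f_p^{[1]}}(V)$ is not available; since you only need the trace, you should instead invoke Kosaki's lemma \cite[Lemma 3.1]{Ko84} directly (the paper already does so in Remark \ref{1strem}), which gives $\at{\frac{d}{dt}}{t=0}\norm{H+tV}_p^p=p\tau(V|H|^{p-1}\sgn(H))$. The branch $1<p<2$ then closes correctly: the H\"older-type inequality $\norm{\Phi_p(A)-\Phi_p(B)}_{p'}\leq C_p\norm{A-B}_p^{p-1}$ for the $(p-1)$-H\"older function $t\mapsto|t|^{p-1}\sgn(t)$ is a genuine theorem (Birman--Koplienko--Solomyak/Aleksandrov--Peller type), and it yields $|R(V)|\leq C\norm{V}_p^{p}=o(\norm{V}_p)$.

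The real gap is in the branch you describe as the easy one, $p\geq 2$. Theorem \ref{th4} is stated only for $2\leq k<p$, so it says nothing about the first-order map $\Phi_p=\frac1p f_p'$; and there is no ``standard operator Lipschitz calculus for $C^1$ symbols'' ($C^1$ functions need not be operator Lipschitz). Even granting that $f_p'$ is Lipschitz on bounded intervals and hence, by Potapov--Sukochev, operator Lipschitz on each $S_q$, $1<q<\infty$, that controls $\norm{f_p'(A)-f_p'(B)}_q$ by $\norm{A-B}_q$ with the \emph{same} index on both sides, whereas you need $\norm{\cdot}_{p'}$ on the left against $\norm{\cdot}_p$ on the right with $p'<p$; this index mismatch is exactly where a reader following your outline would get stuck. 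The needed continuity is true but requires an argument: for instance, write $|A|^{p-1}\sgn(A)=A|A|^{p-2}$, split the difference by H\"older into $\norm{A-B}_p\norm{|A|^{p-2}}_{p/(p-2)}$ plus $\norm{B}_p\norm{|A|^{p-2}-|B|^{p-2}}_{p/(p-2)}$, and control the last factor by the fractional-power inequality for positive operators (iterated when $p>3$), together with continuity of $A\mapsto|A|$ on $L_p$. Alternatively, for $p>2$ and $H\in S_p$ (which is all the paper uses), the lemma is an immediate corollary of Theorem \ref{th5}: the expansion \eqref{b} has bounded multilinear terms $\delta_{k,p}^H$ for $k\geq2$ and remainder $\mathcal O(\norm{V}_p^p)$, all of which are $o(\norm{V}_p)$, so the Fr\'echet derivative is $\delta_{1,p}^H(V)=p\Tr(V|H|^{p-1}\sgn(H))$ with no further work.
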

\begin{thm}(See \cite[Theorem 5.8.5]{SkTo19}, \cite[Theorem 16]{PoSu14})\label{th5}
	Let $m\geq2$ and $p\in\left(m,m+1\right]$. For self-adjoint operators $H,V_1,\ldots,V_m\in S_p$ with $\norm{H}_p\leq 1$, define the map $\delta_{k,p}^H:\underbrace{S_p\times\cdots\times S_p}_{k-times}\mapsto \mathbb{C}$ by 
	\begin{align*}
		\delta_{k,p}^H(V_1,\ldots,V_k):=\begin{cases*}
			p\Tr(V_1|H|^{p-1}\sgn(H)),&k=1\\
			\frac{1}{k!}\sum\limits_{\pi\in\operatorname{Sym}_k}\Tr(V_{\pi(1)}\cdot T^{H,\ldots,H}_{(f_p')^{[k-1]}} (V_{\pi(2)},\ldots,V_{\pi(k)})),&k=2,3,\ldots,m,
		\end{cases*}
	\end{align*} 
	where $\operatorname{Sym}_k$ is the group of all permutations of the set $\{1,\ldots,k\}$ and the function $f_p$ given as in \eqref{a}. Then
	$\delta_{k,p}^H$ is a symmetric multilinear bounded functional for every $k=1,\ldots,m$ and furthermore 
	\begin{align}\label{b}
		\norm{H+V}_p^p=\norm{H}_p^p+\sum_{k=1}^{m}\frac{1}{k}\delta_{k,p}^H(\underbrace{V,\ldots,V)}_{k-times}+\mathcal{O}(\norm{V}_p^p).
	\end{align}
\end{thm}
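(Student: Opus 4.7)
The plan is to apply Taylor's theorem with remainder to the scalar function $g(t):=\norm{H+tV}_p^p = \Tr(f_p(H+tV))$ on the interval $[0,1]$, where $f_p$ is the smoothed $|\cdot|^p$ as in \eqref{a}. Since $p\in(m,m+1]$, the function $|x|^p$ belongs to $C^m(\R)$ but not $C^{m+1}$, and its $m$-th derivative is H\"older continuous of order $p-m$. This regularity is precisely what should produce an expansion of length $m$ with remainder $\mathcal{O}(\norm{V}_p^p)$.

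First, I would identify the Taylor coefficients. By Theorem \ref{th1} (together with its Schatten-class extension via Theorem \ref{equalitythm} and the estimates of Theorem \ref{th4}), for $1\leq k\leq m$ one has
\[
\frac{1}{k!}\at{\frac{d^k}{dt^k}}{t=0}\Tr(f_p(H+tV)) = \Tr\bigl(T^{H,\ldots,H}_{f_p^{[k]}}(V,\ldots,V)\bigr).
\]
Using the standard divided-difference identity $f_p^{[k]}(\lambda_0,\ldots,\lambda_k) = (f_p')^{[k-1]}(\lambda_1,\ldots,\lambda_k)$ and writing $\Tr(E_{i_0}^H V E_{i_1}^H V\cdots V E_{i_k}^H)$ in the spectral Riemann sums, cyclicity of the trace together with symmetrization over $\operatorname{Sym}_k$ converts this to $\frac{1}{k}\delta_{k,p}^H(V,\ldots,V)$. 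The case $k=1$ is exactly Lemma \ref{lm}. Summing the Taylor polynomial at $t=1$ yields the required terms $\sum_{k=1}^{m}\frac{1}{k}\delta_{k,p}^H(V,\ldots,V)$.

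Second, the remainder. I would use the integral form, separated around the $m$-th derivative:
\[
g(1) - \sum_{k=0}^{m}\frac{g^{(k)}(0)}{k!} = \frac{1}{(m-1)!}\int_0^1 (1-s)^{m-1}\bigl[g^{(m)}(s)-g^{(m)}(0)\bigr]\,ds.
\]
By the same derivative identity applied at base point $H+sV$, $g^{(m)}(s) = m!\,\Tr\bigl(T^{H+sV,\ldots,H+sV}_{f_p^{[m]}}(V,\ldots,V)\bigr)$. Using H\"older's inequality and Theorem \ref{th4} (or a perturbation estimate for multiple operator integrals in the $\mathfrak{U}^{m}$ framework), one gets
\[
\bigl|g^{(m)}(s)-g^{(m)}(0)\bigr|\leq C\,\norm{V}_p^{m}\,\bigl\|(f_p')^{[m-1]}(\,\cdot\text{ with }H+sV\,) - (f_p')^{[m-1]}(\,\cdot\text{ with }H\,)\bigr\|_{\mathfrak{U}^{m-1}},
\]
and the H\"older-$(p-m)$ regularity of the top derivative of $|x|^p$ translates, via separation of variables, into a bound of the form $C\,(s\norm{V}_p)^{p-m}\norm{V}_p^{m} = C\,s^{p-m}\norm{V}_p^p$. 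Integrating against $(1-s)^{m-1}$ gives a finite Beta integral since $p>m$, producing $\mathcal{O}(\norm{V}_p^p)$.

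The main obstacle is the last step: establishing the H\"older-type estimate for the multiple operator integral as a function of the base operators. This is exactly the non-trivial technical content of \cite{PoSu14} and \cite{SkTo19}; the difficulty is that $f_p^{[m]}$ lies on the boundary of what lies in $\mathfrak{U}^m$, so one must carefully approximate $(f_p')^{[m-1]}$ by symbols in $\mathfrak{C}^{m-1}$ and control the approximation using the H\"older modulus of continuity of $|x|^{p-m}\sgn(x)^{m}$. All other components of the argument (Taylor expansion, identification of coefficients with $\delta_{k,p}^H$, symmetry of $\delta_{k,p}^H$) follow routinely from the multilinearity and continuity of $T_\phi^{H,\ldots,H}$ together with cyclicity of $\Tr$.
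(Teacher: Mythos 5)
The paper does not actually prove this statement: it is imported verbatim from \cite[Theorem 5.8.5]{SkTo19} and \cite[Theorem 16]{PoSu14} in a block that the authors preface with ``we state those essential results without proof.'' So there is no in-paper argument to compare against; what can be said is how your sketch relates to the strategy of the cited sources. Your skeleton --- Taylor expansion of $g(t)=\Tr(f_p(H+tV))$, identification of $\tfrac{1}{k!}g^{(k)}(0)$ with $\tfrac{1}{k}\delta_{k,p}^H(V,\dots,V)$ via cyclicity of the trace, and an integral-form remainder controlled by H\"older continuity of order $p-m$ of the top derivative --- is indeed the route taken in \cite{PoSu14}, and your identification of where the difficulty sits is accurate.

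Two points keep this from being a proof. First, a local error: the identity $f_p^{[k]}(\lambda_0,\dots,\lambda_k)=(f_p')^{[k-1]}(\lambda_1,\dots,\lambda_k)$ is false pointwise (already for $k=1$, $f^{[1]}(\lambda_0,\lambda_1)\neq f'(\lambda_1)$). The correct statement is $(f')^{[k-1]}(\lambda_1,\dots,\lambda_k)=\sum_{j=1}^{k}f^{[k]}(\lambda_1,\dots,\lambda_j,\lambda_j,\dots,\lambda_k)$, and it is this, combined with the spectral representation of the multiple operator integral and cyclicity of $\Tr$, that yields $\Tr\bigl(T^{H,\dots,H}_{f_p^{[k]}}(V,\dots,V)\bigr)=\tfrac{1}{k}\Tr\bigl(V\,T^{H,\dots,H}_{(f_p')^{[k-1]}}(V,\dots,V)\bigr)$; your conclusion is right but the step you cite for it is not. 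Second, and more seriously, the entire analytic weight of the theorem lies in (a) the existence of the $k$-th derivatives of $t\mapsto\Tr(f_p(H+tV))$ on $S_p$ for $k\le m<p$ in infinite dimensions (Theorem \ref{th1} is stated only for matrices, and passing to $S_p$ is not automatic), and (b) the H\"older-in-the-base-point estimate $\bigl|g^{(m)}(s)-g^{(m)}(0)\bigr|\lesssim s^{p-m}\norm{V}_p^{p}$. You explicitly defer both to \cite{PoSu14} and \cite{SkTo19}, which are the very sources the theorem cites; as a self-contained argument the proposal therefore has a genuine gap at exactly the step that makes the theorem nontrivial, namely that $f_p^{[m]}$ sits at the boundary of the admissible symbol classes and the perturbation estimate cannot be read off from Theorem \ref{th4} alone. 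As an outline of the known proof it is faithful; as a proof it is not complete.
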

\begin{lma}\label{l5}
	Let $2\leq p<\infty$, $1<q<\infty$ with $p\neq q$. Let $A,B$ be two bounded operators in $S_p$ having $(\mathbf{I}_{q,p})$. Then \[\Tr(B~T^{A,A}_{(f'_p)^{[1]}}(B))\neq 0,\] where $f_p$ is the function defined in \eqref{a}.
\end{lma}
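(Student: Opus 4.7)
The plan is to represent the multiple operator integral explicitly via the spectral decomposition of $A$ and then exploit positivity term by term, combined with Lemma \ref{l}.

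First I would use that $A\in S_p$ is self-adjoint and therefore compact, so it admits a spectral resolution $A=\sum_{i\ge 0}d_iP_i$, with the $P_i$ pairwise orthogonal spectral projections onto the distinct eigenspaces (the index $i=0$ reserved for the eigenvalue $0$, with $P_0$ possibly zero). By Remark \ref{l4} (or, when $p=2$, by the trivial observation that $(f'_2)^{[1]}\equiv 2$ on $[-2,2]$), the symbol $(f'_p)^{[1]}$ lies in $\mathfrak{C}^1$, so Theorem \ref{equalitythm} makes the two constructions of $T^{A,A}_{(f'_p)^{[1]}}$ agree. Substituting the atomic spectral measure of $A$ into Definition \ref{dfn1} yields
\begin{align*}
T^{A,A}_{(f'_p)^{[1]}}(B)\;=\;\sum_{i,j}(f'_p)^{[1]}(d_i,d_j)\,P_iBP_j,
\end{align*}
with convergence in $S_{p'}$ by Theorem \ref{th4}. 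Pairing with $B$ and using $B^*=B$ together with cyclicity of the trace gives
\begin{align*}
\Tr\!\big(B\,T^{A,A}_{(f'_p)^{[1]}}(B)\big)\;=\;\sum_{i,j}(f'_p)^{[1]}(d_i,d_j)\,\|P_jBP_i\|_2^{2}.
\end{align*}

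The next step is positivity. Since $\|A\|_p=1$ implies $\sigma(A)\subset[-1,1]\subset[-2,2]$, I can work with $f_p(x)=|x|^p$ on the spectrum, and $f'_p(x)=p|x|^{p-1}\sgn(x)$ is nondecreasing on $[-2,2]$ for $p\ge 2$. Hence each divided difference $(f'_p)^{[1]}(d_i,d_j)\ge 0$. A direct case analysis shows that $(f'_p)^{[1]}(d_i,d_j)>0$ whenever $(d_i,d_j)\ne(0,0)$, while $(f'_p)^{[1]}(0,0)=f''_p(0)$ equals $2$ if $p=2$ and $0$ if $p>2$. Consequently every summand above is nonnegative.

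To conclude I would argue by contradiction. If $\Tr(B\,T^{A,A}_{(f'_p)^{[1]}}(B))=0$, every term in the series must vanish. For $p=2$ this forces $P_jBP_i=0$ for all $i,j$, hence $B=0$, contradicting $\|B\|_p=1$. For $p>2$ it forces $P_jBP_i=0$ whenever $(d_i,d_j)\ne(0,0)$; for any $d_i\ne 0$ every pair $(d_i,d_j)$ is then of this form, giving $P_iB=\sum_j P_iBP_j=0$, and therefore $AB=\sum_{d_i\ne 0}d_iP_iB=0$, in direct contradiction with Lemma \ref{l}. The only delicate point is the identification of the abstract multiple operator integral with the concrete series, which is routine given the compactness of $A$ and the $\mathfrak{C}^1$-regularity of $(f'_p)^{[1]}$; the rest reduces to a clean positivity argument combined with Lemma \ref{l}.
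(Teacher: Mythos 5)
Your argument is correct and rests on the same two pillars as the paper's proof --- strict positivity of the divided differences $(f'_p)^{[1]}(d_i,d_j)$ away from the pair $(0,0)$, and Lemma \ref{l} to rule out $AB=0$ --- but you reach the quadratic form by a different technical route. The paper never expands the infinite-dimensional multiple operator integral directly: it truncates, setting $A_n=P_nAP_n$ and $B_n=P_nBP_n$ with $P_n$ the projection onto the span of the first $n$ eigenvectors, uses the continuity bound of Theorem \ref{th4} to show $\Tr(B_n\,T^{A,A}_{(f'_p)^{[1]}}(B_n))\to\Tr(B\,T^{A,A}_{(f'_p)^{[1]}}(B))$, identifies each truncated quantity with the finite-dimensional sum \eqref{eq5}, and then observes that these sums are nonnegative and increasing in $n$, with strict positivity already at a finite stage $n_0$ where $A_{n_0}B_{n_0}\neq0$. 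You instead write $T^{A,A}_{(f'_p)^{[1]}}(B)=\sum_{i,j}(f'_p)^{[1]}(d_i,d_j)P_iBP_j$ outright and argue term by term, concluding by contradiction. (Note the coefficients agree: $f_p^{[2]}(d_l,d_k,d_k)+f_p^{[2]}(d_k,d_l,d_l)=(f'_p)^{[1]}(d_l,d_k)$.) Your route is cleaner once the expansion is granted, but the expansion is precisely what the truncation is designed to sidestep: you should say a word about convergence of the double series in $S_{p'}$ (the partial sums are essentially the truncated operator integrals, so Theorems \ref{th4} and \ref{equalitythm} do deliver this), and for $p>2$ the $(0,0)$ term must be read as the zero operator rather than as $(f'_p)^{[1]}(0,0)\norm{P_0BP_0}_2^2$, since the kernel of $A$ may be infinite-dimensional and $P_0BP_0\in S_p$ need not be Hilbert--Schmidt, which would make the latter a $0\cdot\infty$ expression. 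Neither point is a genuine gap; the two proofs establish the same positivity, with the paper trading your explicit series for a monotone-limit argument that avoids these convergence questions entirely.
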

\begin{proof}
	Let $\{d_i\}_{i=1}^\infty$ be the set of eigenvalues of $A$ and let $\{e_i\}_{i=1}^{\infty}$ be the corresponding set of orthonormal basis of $\ell_2$ consisting the eigenvectors of $A$. Then the canonical decomposition of $A$ is given by $A=\sum\limits_{i=1}^{\infty}d_i\la \cdot,e_i\ra e_i$. Let $P_n$ be the orthogonal projection of $\ell_2$ onto $\text{Span}~\{e_i\}_{i=1}^{n}$ for every $n\in \mathbb{N}$. Then it is easy to observe that $P_n$ converges to the identity operator in strong operator topology. Now we denote $A_n=P_nAP_n$ and $B_n=P_nBP_n$. Then it is straight forward to show that $\{A_n\}$ and $\{B_n\}$ converges to $A$ and $B$ respectively in Schatten- $p$ norm. Moreover from Lemma \ref{l} we know that $AB\neq 0$. Thus there exists $n_0\in\Nat$ such that $\norm{A_{n_0}}_p,\norm{B_{n_0}}_p\geq1/2$ and $A_{n_0}B_{n_0}\neq 0$. Next we show that $\Tr(B~T^{A,A}_{(f'_p)^{[1]}}(B))$ can be approximated by $\Tr(B_n~T^{A,A}_{(f'_p)^{[1]}}(B_n))$. Indeed,
	\begin{align*}
		&|\Tr(B~T^{A,A}_{(f'_p)^{[1]}}(B))-\Tr(B_n~T^{A,A}_{(f'_p)^{[1]}}(B_n))|\\
		\leq&~|\Tr(B~T^{A,A}_{(f'_p)^{[1]}}(B-B_n))|+|\Tr((B-B_n)~T^{A,A}_{(f'_p)^{[1]}}(B_n))|.
	\end{align*}
	On the other hand by using Theorem \ref{th4} from the above equation we conclude
	\begin{align*}
		&|\Tr(B~T^{A,A}_{(f'_p)^{[1]}}(B))-\Tr(B_n~T^{A,A}_{(f'_p)^{[1]}}(B_n))|\\
		\leq &~\text{const}~ \big(\norm{B}_p \norm{A}_p^{p-1}\norm{B-B_n}_p+ \norm{B-B_n}_p\norm{A}_p^{p-1}\norm{B_n}_p\big)\to 0 \text{ as } n\to\infty.
	\end{align*} 
	Now we consider $A_n$ and $B_n$ are the operators on the finite dimensional Hilbert space $P_n\ell_2$. Suppose the matrix representations of $A_n$ and $B_n$ with respect to the orthonormal basis $\{e_i\}_{i=1}^{n}$ are as follows:
	$A_n=\begin{bmatrix}
		d_1&0&\cdots&0\\
		0&d_2&\cdots&0\\
		\vdots&\vdots&\ddots&0\\
		0&0&\cdots&d_n
	\end{bmatrix}_{n\times n}$ and 
	$B_n=\begin{bmatrix}
		b_{11}&b_{12}&\cdots&b_{1n}\\
		\bar{b_{12}}&b_{22}&\cdots&b_{2n}\\
		\vdots&\vdots&\ddots&0\\
		\bar{b}_{1n}&\bar{b}_{2n}&\cdots&{b}_{nn}
	\end{bmatrix}_{n\times n}$.\\
	Now by Remark \ref{l4} we have $(f'_{p})^{[1]}\in\mathfrak{C}_1$ and hence by Theorem  \ref{equalitythm} we conclude 
	\begin{equation}\label{equality2}
		T_{(f'_{p})^{[1]}}=\hat{T}_{(f'_{p})^{[1]}}.\end{equation}
	Therefore using equations \eqref{equality2} and \eqref{eq1} we get
	\begin{align*}
		\Tr(B_n~T^{A,A}_{(f'_p)^{[1]}}(B_n))=\Tr(B_n~T^{A_n,A_n}_{(f'_p)^{[1]}}(B_n))
		=\Tr(B_n~\hat{T}^{A_n,A_n}_{(f'_p)^{[1]}}(B_n))
		=\frac{1}{2}\Tr(\hat{T}^{A_n,A_n,A_n}_{(f_p)^{[2]}}(B_n,B_n)).
	\end{align*}
	Thus the identity in \eqref{eq5} yields
	\begin{align*}
		\Tr(B_n~T^{A,A}_{(f'_p)^{[1]}}(B_n))=&\sum_{l=1}^{n}|b_{ll}|^2f_p^{[2]}(d_l,d_l,d_l)+\sum_{l=1}^{n-1}\sum_{k=l+1}^{n}|b_{lk}|^2\big(f_p^{[2]}(d_l,d_k,d_k)+f_p^{[2]}(d_k,d_l,d_l)\big)
	\end{align*} 
	which is an increasing sequence and by similar argument as in the finite dimensional case we conclude 	$\Tr(B_{n_0}~T^{A_{n_0},A_{n_0}}_{(f'_p)^{[1]}}(B_{n_0}))>0$ and hence 
	\begin{align*}
		\Tr(B~T^{A,A}_{(f'_p)^{[1]}}(B))=\lim_{n\to\infty}\Tr(B_n~T^{A,A}_{(f'_p)^{[1]}}(B_n))\neq 0.
	\end{align*}
	This completes the proof.
\end{proof}

\noindent Now we are in a position to state and prove our main result in this section, which establishes infinite-dimensional analogs of the results in Section \ref{isofinite}.
\begin{thm}\label{th6}
	Let $(q,p)\in \left(1,\infty\right)\times\left[2,\infty \right)\cup[4,\infty)\times\{1\} \cup\{\infty\}\times\left( 1,\infty\right)\cup[2,\infty)\times\{\infty\}$ be such that $p\neq q$. If there exists an isometric embedding $S_q$ into $S_p$, then $q=2$.
\end{thm}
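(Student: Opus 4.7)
The plan is to apply Lemma \ref{reduct} to reduce the problem to finding self-adjoint $A,B\in S_p(\ell_2)$ satisfying $(\mathbf{I}_{q,p})$, and then dispatch each of the four subranges of $(q,p)$ separately. The key idea for the main subrange $(1,\infty)\times[2,\infty)$ is the same asymptotic-comparison strategy used in finite dimensions for Theorem \ref{th2}: expand $\|A+tB\|_p^p$ in a neighbourhood of $0$, match it with the expansion of $(1+|t|^q)^{p/q}$, and read off $q=2$ from the leading non-constant terms. The main difference from the finite-dimensional case is that the real-analytic parametrization of eigenvalues afforded by Kato--Rellich is unavailable, so the expansion must be built by a different route.

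For $(q,p)\in(1,\infty)\times[2,\infty)$ with $p\neq q$, the subcase $p=2$ is immediate since $S_2$ is a Hilbert space and admits no non-Hilbert isometric subspace. For $p>2$, I would pick the unique $m\ge 2$ with $p\in(m,m+1]$ and apply Theorem \ref{th5} to expand
\[
\|A+tB\|_p^p=1+\sum_{k=1}^{m}\tfrac{t^k}{k}\,\delta^{A}_{k,p}(\underbrace{B,\ldots,B}_{k})+\mathcal{O}(|t|^p).
\]
Birkhoff--James orthogonality $A\perp_{BJ}B$, an immediate consequence of $(\mathbf{I}_{q,p})$ since $t\mapsto\|A+tB\|_p$ attains its minimum at $t=0$, combined with Remark \ref{1strem}, gives $\delta^{A}_{1,p}(B)=0$. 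Lemma \ref{l5} then guarantees $\delta^{A}_{2,p}(B,B)=\operatorname{Tr}(B\,T^{A,A}_{(f'_p)^{[1]}}(B))\neq 0$. Comparing with $(1+|t|^q)^{p/q}=1+\tfrac{p}{q}|t|^q+\mathcal{O}(|t|^{2q})$, restricting to $t>0$, and dividing by the smaller of $t^2$ and $t^q$ before letting $t\to 0^+$, rules out both $q<2$ and $q>2$, forcing $q=2$.

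The remaining three subranges are shorter. For $[4,\infty)\times\{1\}$ the argument is verbatim that of Theorem \ref{thm 3.4}: the $2$-uniform $PL$-convexity Theorem \ref{th} applied to $A,B$, combined with $\|A+e^{i\theta}B\|_1=2^{1/q}$ (from the underlying isometry extended to complex scalars), yields $(1+c)^q\le 4$, which fails for $q\ge 4$ and $c\ge 1/2$. For $\{\infty\}\times(1,\infty)$, $(\mathbf{I}_{\infty,p})$ gives $\|A\pm B\|_p=\|A\|_p=1$ while $A=\tfrac12((A+B)+(A-B))$; strict convexity of $S_p$ for $1<p<\infty$ then forces $B=0$, contradicting $\|B\|_p=1$. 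For $[2,\infty)\times\{\infty\}$ with $q>2$, compactness and self-adjointness of $A$ provide a unit eigenvector $x$ with $Ax=\pm x$; expanding $\|(A+tB)x\|^2\le (1+|t|^q)^{2/q}$ and using $q>2$ forces first $\operatorname{Re}\langle Bx,x\rangle=0$ and then $\|Bx\|^2=0$, so $Bx=0$. The finite-rank spectral projection $P$ of $A$ onto its $\pm 1$ eigenspaces then satisfies $B=(I-P)B(I-P)$ while $\|(I-P)A(I-P)\|_\infty<1$, so $\|A+tB\|_\infty=1$ for all small $t$, contradicting $(1+|t|^q)^{1/q}>1$.

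The main obstacle is the infinite-dimensional analogue of the non-vanishing of the second derivative, encapsulated in Lemma \ref{l5}. Its proof relies on truncating $A$ and $B$ by the finite-rank spectral projections of $A$ and controlling the approximation of $\operatorname{Tr}(B\,T^{A,A}_{(f'_p)^{[1]}}(B))$ through the Schatten-norm bound of Theorem \ref{th4}, reducing the sign of the limit to the finite-dimensional divided-difference computation from Lemma \ref{l2}. The restriction $p\ge 2$ in the main subrange stems directly from the hypothesis of Theorem \ref{th5}; together with the absence of a Kato--Rellich type theorem in infinite dimensions, this is precisely what obstructs the missing range $(1,\infty)\times(1,2)$.
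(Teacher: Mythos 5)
Your proposal is correct and follows essentially the same route as the paper: reduce via Lemma \ref{reduct} to a self-adjoint pair with $(\mathbf{I}_{q,p})$, use the Taylor expansion of Theorem \ref{th5} together with Birkhoff--James orthogonality (Remark \ref{1strem}) and the non-vanishing second-order term from Lemma \ref{l5} for $(q,p)\in(1,\infty)\times(2,\infty)$, invoke the $PL$-convexity bound of Theorem \ref{th} for $[4,\infty)\times\{1\}$ exactly as in Theorem \ref{thm 3.4}, and run the eigenvector/spectral-projection argument of Theorem \ref{isometric} for $[2,\infty)\times\{\infty\}$. The only genuine divergences are in two short subcases. For $p=2$ you observe that a subspace of the Hilbert space $S_2$ must satisfy the parallelogram law, whereas the paper expands $\|A+tB\|_2^2=1+2t\operatorname{Tr}(AB)+t^2$ and uses $\operatorname{Tr}(AB)=0$; these are equivalent in substance. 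For $(q,p)\in\{\infty\}\times(1,\infty)$ you use strict (uniform) convexity of $S_p$ applied to $A=\tfrac12\big((A+B)+(A-B)\big)$ with $\|A\pm B\|_p=\|A\|_p=1$ to force $B=0$, whereas the paper notes that $t\mapsto\|A+tB\|_p^p$ is Fr\'echet differentiable (Lemma \ref{lm}) while $\max\{1,|t|^p\}$ fails to be differentiable at $t=1$. Both are valid one-line obstructions; your convexity argument avoids the differentiability machinery but relies on the Clarkson--McCarthy-type strict convexity of $S_p$, which the paper does not explicitly record, while the paper's argument reuses Lemma \ref{lm}, which is already stated. Everything else, including the role of Theorem \ref{th5}'s hypothesis $p\in(m,m+1]$ and the truncation underlying Lemma \ref{l5}, matches the paper's proof.
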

\begin{proof}
	Suppose there exists an isometric embedding from $S_q$ to $S_p$. Therefore by Lemma \ref{reduct} there exist two bounded self-adjoint operators $A,B\in S_p$ satisfying $(\mathbf{I}_{q,p})$.
	
	\vspace{.1in}
	\noindent\textbf{Case-I:} Suppose $1<q<\infty$ and $2\leq p<\infty$. For $p=2$, from $(\mathbf{I}_{q,p})$ we have 
	\begin{align*}
		1+\frac{2}{q} |t|^q+\mathcal{O}(|t|^{2q})= 1+2t\Tr(AB)+t^2.
	\end{align*}
	On the other hand by Lemma \ref{l3} we have $\Tr(AB)=0$. Thus by limiting argument as earlier, we can conclude that $q=2$, which is a contradiction.\\
	Now we assume  $p>2$. Then for $t$ belonging to a small neighborhood of $0$, from $(\mathbf{I}_{q,p})$ and equation \eqref{b}, we have
	\begin{align*}
		1+\frac{2}{q} |t|^q+\mathcal{O}(|t|^{2q})=1+t\Tr(B\cdot|A|^{p-1}sgn(A))+\frac{t^2}{2} \Tr(B~T^{A,A}_{(f'_p)^{[1]}}(B))+\mathcal{O}(|t|^p).
	\end{align*} Therefore by applying Lemma \ref{l3} and Lemma \ref{l5} and performing limiting argument as in Case-II of Theorem~\ref{th2} we conclude that $q=2$.
	
	\vspace{.1in}
	\noindent
	\textbf{Case-II:} For the case when $(q,p)\in[4,\infty)\times\{1\}$ one can prove it by the same argument as in Case 1. of Theorem \ref{thm 3.4}.
	
	\vspace{.1in}
	\noindent
	\textbf{Case-III:} Let $q=\infty$ and $1<p<\infty$. Then $(\mathbf{I}_{q,p})$ gives
	\begin{align}\label{inf}
		\max\{1,|t|^p\}=\norm{A+tB}_p^p,\quad t\in\R.
	\end{align}
	Observe that the right hand side of the above equation \eqref{inf} is differentiable at $t=1$ which follows from the Lemma \ref{lm} but on the other hand the left hand side of equation \eqref{inf} is not differentiable at $t=1$. Therefore $S_\infty$ is not isometrically embedded in $S_p$.
	
	\vspace{.1in}
	\noindent
	\textbf{Case-IV:} For the case when $(q,p)\in[2,\infty)\times\{\infty\}$ one can prove it by the same argument as in Case 1. of Theorem \ref{isometric}.
\end{proof}
\begin{rmrk} It follows from our results that for $(q,p)\in(1,\infty]\times(1,\infty) \cup(1,\infty)\setminus\{3\}\times\{1,\infty\}$ and $p\neq q,$ with $p\neq q,$ if there exists an isometric embedding of $L_q(\mathcal M)$ into $S_p^n$, where $\mathcal M$ is a hyperfinite von Neumann algebra with normal semifinite faithful trace, and $2\leq \text{dim}L_q(\mathcal M)\leq n,$ then we must have $q=2$. Moreover, for the infinite-dimensional case if $(q,p)\in \left(1,\infty\right)\times\left[2,\infty \right)\cup[4,\infty)\times\{1\} \cup\{\infty\}\times\left( 1,\infty\right)\cup[2,\infty)\times\{\infty\}$ be such that $p\neq q$ and there exists an isometric embedding of $L_q(\mathcal M)$ into $S_p$, where $\mathcal M$ is a hyperfinite von Neumann algebra with normal semifinite faithful trace, and $\text{dim}L_q(\mathcal{M})\geq 2,$ then, we must have $q=2$.
\end{rmrk}	

\section*{Acknowledgements} We thank Prof. Gadadhar Misra for several useful pieces of advice. The first named author thankfully acknowledges the financial support provided by Mathematical Research Impact Centric Support (MATRICS) grant, File no: MTR/2019/000640, by the Science and Engineering Research Board (SERB), Department of Science and Technology (DST), Government of India. The second named author is supported by NSF of China ($\#$12071355). The third named author acknowledges Serb-DST(ECR/2018/002243 dated 12 March 2019) for financial support. The fourth named author thanks the Indian Institute of Technology Guwahati, Government of India, for the financial support. The fifth named author acknowledges Key-subsidy postdoctoral fellowship supported by Wuhan University (NSF (China), Grant-12071355). We sincerely thank the referee for several suggestions which considerably improved the presentation of this paper.


%

\begin{thebibliography}{99}
	
	\bibitem{Ar75}
	 J. Arazy, {\em The isometries of $C_p$}, Israel J. Math. 22 (1975) 247--256.
	
	\bibitem{AzCaDoSu09}
	 N. A. Azamov, A. L. Carey, P. G. Dodds, \and F. A. Sukochev, {\em Operator integrals, spectral shift, and spectral flow}, Canad. J. Math. 61 (2009) 241--263.
	
	\bibitem{BaCaLi94} K. Ball, E. A. Carlen, E. H. Lieb, {\em Sharp uniform convexity and smoothness inequalities for trace norms}, Invent. Math. 115 (1994) 463--482.
	
	\bibitem{Ba32} S. Banach, {\emph Th\'{e}orie des op\'{e}rations lin\'{e}aires} (French) [Theory of linear operators], Reprint of the 1932 original (\`Editions Jacques Gabay, Sceaux, 1993) iv+128 pp.
	
	\bibitem{Ba85} H. Baumgärtel, {\emph Analytic perturbation theory for matrices and operators}, Operator Theory: Advances and Applications (Birkh\"{a}user Verlag, Basel, {V}ol. 15, 1985) 427 pp.
	
	\bibitem{BI35} G. Birkhoff,  {\em Orthogonality in linear metric spaces}, Duke Math. J. 1 (1935) 169--172. 
	
	\bibitem{BoCoMoWoZa19} T. Bottazzi, C. Conde, M. S. Moslehian, P. W\'{o}jcik, \and A. Zamani, {\em Orthogonality and parallelism of operators on various {B}anach spaces}, J. Aust. Math. Soc. 106 (2019) 160--183.
	
	\bibitem{BrDiKr66} J. Bretagnolle, D. Dacunha-Castelle, \and J. L. Krivine, {\it Lois stables et espaces {$L^{p}$} (French)}, {\emph Symposium on {P}robability {M}ethods in {A}nalysis ({L}outraki, 1966)} (Springer, Berlin, 1967) 48--54. 
	
	\bibitem{DaGaJa84} 
	William J. Davis, D. J. H. Garling, \and N. Tomczak-Jaegermann, {\em The complex convexity of quasinormed linear spaces}, J. Funct. Anal. 55 (1984) 110--150.
	
	\bibitem{DeJaPe98}  F. Delbaen, H. Jarchow, \and A. Pe\l czy\'{n}ski, {\em Subspaces of {$L_p$} isometric to subspaces of {$l_p$}}, Positivity 2 (1998) 339--367.
	
	\bibitem{FlJa03}
	R. J. Fleming, \and J. E. Jamison, {\em Isometries on {B}anach spaces: function spaces}, Chapman \& Hall/CRC Monographs and Surveys in Pure and Applied Mathematics (Chapman \& Hall/CRC, Boca Raton, FL, Vol-129, 2003) x+197 pp.
	
	\bibitem{FlJa07}
	 R. J. Fleming, \and J. E. Jamison, {\em Isometries on {B}anach spaces. {V}ol. 2.} Vector-valued function spaces, Chapman \& Hall/CRC Monographs and Surveys in Pure and Applied Mathematics, Vector-valued function spaces, (Chapman \& Hall/CRC, Boca Raton, FL, Vol-138, 2008) x+234 pp.
	
	\bibitem{GuRe18} 
	R. Gupta, \and R. M. Reza, {\em Operator Space Structures on $\ell_1(n)$},  Houston J. Math. 44 (2018) 1205--1212.
	
	\bibitem{He71}  
	C. Herz, {\em The theory of {$p$}-spaces with an application to convolution operators}, Trans. Amer. Math. Soc. 154 (1971) 69--82.
	
	\bibitem{JA45} 
	 R. C. James, {\em Orthogonality in normed linear spaces}, Duke Math. J. 12 (1945) 291--302. 
	
	\bibitem{Ju00}
	 M. Junge, {\em Embeddings of non-commutative {$L_p$}-spaces into non-commutative {$L_1$}-spaces, {$1<p<2$}}, Geom. Funct. Anal. 10 (2000) 389--406.
	
	\bibitem{Ju05}
	 M. Junge, {\em Embedding of the operator space {$OH$} and the logarithmic `little {G}rothendieck inequality'}, Invent. Math. 161 (2005) 225--286.
	
	\bibitem{JuNiRuXu04}
	M. Junge, N. J. Nielsen, Z. J. Ruan, \and Q. Xu, {\em {$\mathscr C\mathscr O\mathscr L_p$} spaces---the local structure of non-commutative {$L_p$} spaces}, Adv. Math. 187 (2004) 257--319.
	
	\bibitem{JuPa10}
	 M. Junge, \and J. Parcet, {\em Mixed-norm inequalities and operator space {$L_p$} embedding  theory}, Mem. Amer. Math. Soc. 203 (2010) vi+155 pp.
	
	\bibitem{JuPa08}
	M. Junge, \and J. Parcet, {\em Operator space embedding of {S}chatten {$p$}-classes into von {N}eumann algebra preduals}, Geom. Funct. Anal. 18 (2008) 522--551.
	
	\bibitem{JuPa082}
	M. Junge, \and J. Parcet, {\em Rosenthal's theorem for subspaces of noncommutative {$L_p$}}, Duke Math. J. 141 (2008) 75--122.
	
	
	
	\bibitem{JuRuSh05} 
	 M. Junge, Z. Ruan, \and D. Sherman, {\em A classification for $2$-isometries of noncommutative $L_p$-spaces}, Israel J. Math. 150 (2005) 285--314.
	
	\bibitem{JuSuZa17}
	 M. Junge, F. Sukochev, \and D. Zanin, {\em Embeddings of operator ideals into $\mathcal L_p$-spaces on finite von Neumann algebras}, Adv. Math. 312 (2017) 473-546.
	
	\bibitem{Ka13}
	 T. Kato, {\em Perturbation theory for linear operators}, Reprint of the 1980 edition, Classics in Mathematics  (Springer-Verlag, Berlin, 1995) xxii+619 pp.
	
	\bibitem{KoKo01}
	 A. Koldobsky, \and H. K\"{o}nig, {\em Aspects of the isometric theory of {B}anach spaces}, Handbook of the geometry of {B}anach spaces, {V}ol. {I} (North-Holland, Amsterdam, 2001) 899--939.
	
	\bibitem{Ko84}
	 H. Kosaki,{\em Applications of uniform convexity of noncommutative {$L^{p}$}-spaces}', Trans. Amer. Math. Soc. 283 (1984) 265--282.
	
	\bibitem{La58}
	 J. Lamperti,{\em On the isometries of certain function-spaces}, Pacific J. Math. 8 (1958) 459--466.
	
	\bibitem{Le37}
	 P. Le\'vy, {\emph Th\'eorie de l’addition des variables al\'eatoires [Combination theory of unpredictable variables]}, 	Monographies des probabilit\'es (Paris, Gauthier-Villars, {V}ol. 1, 1937).
	
	
	\bibitem{Ly08}
	  Y. I. Lyubich, {\em Upper bound for isometric embeddings $\ell_2^m\to\ell_p^n$}, Proc. Amer. Math. Soc. 136 (2008) 3953--3956. 
	
	\bibitem{Ly09}  
	Y. I. Lyubich, {\em Lower bounds for projective designs, cubature formulas and related isometric embeddings}, European J. Combin. 30 (2009) 841--852.
	
	\bibitem{LySh01}
	 Y. I. Lyubich, \and O. A. Shatalova, {\em Euclidean subspaces of the complex spaces {$l^n_p$} constructed by orbits of finite subgroups of {${\rm SU}(m)$}}, Geom. Dedicata 86 (2001) 169--178.
	
	\bibitem{LySh04}
	 Y. I. Lyubich, \and O. A. Shatalova, {\em Isometric embeddings of finite-dimensional {$l_p$}-spaces over the quaternions}, Algebra i Analiz 16 (2004) 15-32; reprinted in St. Petersburg Math. J. 16 (2005) 9--24.
	
	\bibitem{LyVa93}
	 Y. I. Lyubich, \and L. N. Vaserstein, {\em Isometric embeddings between classical Banach spaces, cubature formulas, and spherical designs}, Geom. Dedicata 47 (1993) 327--362.
	
	
	
	\bibitem{Pe06}
	 V. V. Peller, {\em Multiple operator integrals and higher operator derivatives}, J. Funct. Anal. 233 (2006) 515--544.
	
	\bibitem{Pi04}
	 G. Pisier, {\em The operator {H}ilbert space {OH} and type {III} von {N}eumann algebras}, Bull. Lond. Math. Soc. 36 (2004) 455--459.
	
	\bibitem{PoSkSu13}
	{ D. Potapov, A. Skripka, \and F. Sukochev,} {\em Spectral shift function of higher order}, Invent. Math. 193 (2013) 501--538.
	
	\bibitem{PoSkSuOP13}
	 D. Potapov, A. Skripka, \and F. Sukochev,  {\em On {H}ilbert-{S}chmidt compatibility}, Oper. Matrices 7 (2013) 1--33.
	
	\bibitem{PoSu14}
	 D. Potapov, \and F. Sukochev,{\em Fr\'{e}chet differentiability of {$\mathcal{S}^p$} norms}, Adv. Math. 262 (2014) 436--475.
	
	\bibitem{Ra08}
	 N. Randrianantoanina, {\em Embeddings of non-commutative $L_p$-spaces into preduals of finite von Neumann algebras}, Israel J. Math. 163 (2008) 1--27.
	
	
	\bibitem{Ra20} 
	 S. K. Ray, {\em On isometric embedding {$\ell^m_p\to S_\infty$} and unique operator space structure}, Bull. Lond. Math. Soc. 52 (2020) 437--447.
	
	\bibitem{RaXu03}
	 Y. Raynaud, \and Q. Xu, {\em On subspaces of non-commutative {$L_p$}-spaces}, J. Funct. Anal. 203 (2003) 149--196.
	
	\bibitem{ReBe69}
	F. Rellich, {\em Perturbation theory of eigenvalue problems}, Assisted by J. Berkowitz. With a preface by Jacob T. Schwartz (Gordon and Breach Science Publishers, New York-London-Paris, 1969) x+127 pp.
	
	\bibitem{RiXu16}
	 \'E. Ricard, \and X. Quanhua, {\em A noncommutative martingale convexity inequality}, Ann. Probab. 44 (2016) 867--882.
	
	
	\bibitem{Ro73} H. P. Rosenthal, {\em On subspaces of {$L^{p}$}}, Ann. of Math. (2) 97 (1973) 344--373.
	
	\bibitem{Sc38} I. J. Schoenberg, `{\em Metric spaces and positive definite functions}', Trans. Amer. Math. Soc. 44 (1938) 522--536.
	
	\bibitem{SkTo19} 
 A. Skripka, \and A. Tomskova, {\emph Multilinear operator integrals, Theory and applications}, Lecture Notes in Mathematics (Springer, Cham, {V}ol. 2250, 2019) xi+190 pp.
	
	\bibitem{SuXu03} 
	 F. A. Sukochev, \and Q. Xu, {\em Embedding of non-commutative {$L^p$}-spaces: {$p<1$}}, Arch. Math. (Basel) 80 (2003) 151-164.
	
	
	\bibitem{Xu06}
	 \bibname Q. Xu,{\em Embedding of {$C_q$} and {$R_q$} into noncommutative {$L_p$}-spaces, {$1\leq p<q\leq2$}}, Math. Ann. 335 (2006) 109--131.
	
	\bibitem{Ye81}
	F. J. Yeadon, {\em Isometries of noncommutative {$L^{p}$}-spaces},  Math. Proc. Cambridge Philos. Soc. 90 (1981) 41--50.
	
	\bibitem{Zh20}
	  H. Zhang, {\em Optimal $2$-uniform convexity of Schatten classes revisited}, Preprint, 2020, arXiv:2011.00354.
\end{thebibliography}
\end{document}